\documentclass[12pt,reqno]{amsart}
\usepackage{amssymb,latexsym,graphicx,amscd,upgreek}
\usepackage{lscape}
\usepackage[all]{xy}
\usepackage{color}
\usepackage{epic,eepic}
\usepackage{xspace}
\usepackage{mathrsfs}
\usepackage{setspace}
\usepackage{cases}
\usepackage{bbm}
\usepackage{hyperref}
\hypersetup{
    colorlinks,
    linkcolor={blue},
    citecolor={blue},
    urlcolor={blue!80!black}
}

\newtheorem{Thm}{Theorem}[section]
\newtheorem{Lem}[Thm]{Lemma}
\newtheorem{Cor}[Thm]{Corollary}
\newtheorem{Prop}[Thm]{Proposition}

\newtheorem{Qu}[Thm]{Question}
\newtheorem{Qus}[Thm]{Questions}

\theoremstyle{definition}

\newcommand{\Example}{\noindent{\bf Example}}
\newcommand{\Examples}{\noindent{\bf Examples}}

\newcommand{\Remark}{\noindent{\bf Remark}}

\newcommand{\N}{\mathbb{N}}

\newcommand{\df}{\colon}

\newcommand{\cI}{{\mathcal I}}

\newcommand{\cO}{{\mathcal O}}
\newcommand{\cP}{{\mathcal P}}

\newcommand{\cU}{{\mathcal U}}
\newcommand{\cV}{{\mathcal V}}

\newcommand{\cZ}{{\mathcal Z}}

\newcommand{\bd}{\mathbf{d}}

\newcommand{\rk}{\operatorname{rk}}

\newcommand{\md}{\operatorname{mod}}

\newcommand{\ind}{\operatorname{ind}}

\newcommand{\Spec}{\operatorname{Spec}}

\newcommand{\gldim}{\operatorname{gl.dim}}
\newcommand{\pdim}{\operatorname{proj.dim}}

\newcommand{\dimv}{\underline{\dim}}

\newcommand{\soc}{\operatorname{soc}}

\newcommand{\tp}{\operatorname{top}}

\newcommand{\proj}{\operatorname{proj}}
\newcommand{\inj}{\operatorname{inj}}

\newcommand{\Hom}{\operatorname{Hom}}
\newcommand{\Ext}{\operatorname{Ext}}

\newcommand{\Aut}{\operatorname{Aut}}

\newcommand{\taureg}{\operatorname{\tau-reg}}
\newcommand{\rigid}{\operatorname{rigid}}
\newcommand{\partilt}{\operatorname{partilt}}

\newcommand{\taurigid}{\operatorname{{\tau-rigid}}}

\newcommand{\Ima}{\operatorname{Im}}

\newcommand{\Coker}{\operatorname{Cok}}

\newcommand{\ov}{\overline}

\newcommand{\irr}{\operatorname{Irr}}

\newcommand{\bsm}{\begin{smallmatrix}}
\newcommand{\esm}{\end{smallmatrix}}

\newcommand{\bbsm}{\left[\begin{smallmatrix}}
\newcommand{\besm}{\end{smallmatrix}\right]}

\newcommand{\bbm}{\begin{matrix}}
\newcommand{\ebm}{\end{matrix}}

\newcommand{\GL}{\operatorname{GL}}

\newcommand{\calU}{\mathcal{U}}
\newcommand{\calZ}{\mathcal{Z}}

\begin{document}

\date{14.05.2026}

\title{On the additivity of projective presentations of maximal rank}

\author{Grzegorz Bobi\'nski}
\address{Grzegorz Bobi\'nski\newline
Faculty of Mathematics and Computer Science\newline
Nicolaus Copernicus University\newline
ul. Chopina 12/18\newline
87-100 Toru\'n\newline
Poland}
\email{gregbob@mat.umk.pl}

\author{Jan Schr\"oer}
\address{Jan Schr\"oer\newline
Mathematisches Institut\newline
Universit\"at Bonn\newline
Endenicher Allee 60\newline
53115 Bonn\newline
Germany}
\email{schroer@math.uni-bonn.de}

\subjclass[2020]{
Primary 16G10, 16G70, 14L30; Secondary 16G60}

\begin{abstract}
We study projective presentations of finite-dimensional modules over finite-dimensional algebras. We discuss if
projective presentations of maximal rank behave additively.
More precisely, we ask
if the direct sum of copies of a projective presentation of maximal rank is again of maximal rank.
The modules which have a projective presentation of maximal rank
are exactly the $\tau$-regular modules.
This class of modules can be seen as a generalization of
modules of projective dimension at most one, and of $\tau$-rigid modules.
The $\tau$-regular modules form
open subsets of module varieties.
Their closures are therefore unions of irreducible components, which are
called generically $\tau$-regular.
We discuss when a $\tau$-regular module or a generically $\tau$-regular component can be reduced to
a module or component of projective dimension at most one,
and we show that this is closely related to the question on
the additivity of maximal rank presentations.
\end{abstract}

\maketitle

\setcounter{tocdepth}{1}
\numberwithin{equation}{section}
\tableofcontents

\parskip2mm


\section{Introduction and main results}\label{sec:intro}


Throughout, $K$ denotes an algebraically closed field, and
$A$ is always a finite-dimensional associative $K$-algebra.
Let $\md(A)$ be the category of finite-dimensional left $A$-modules.
Let $\tau_A$ be the Auslander-Reiten translation for $A$.

\subsection{Projective presentations of maximal rank}
For $M,N \in \md(A)$ let
$$
r(M,N) := \max\{ \rk(f) \mid f \in \Hom_A(M,N) \},
$$
where $\rk(f)$ is the $K$-dimension of the image $\Ima(f)$ of $f$.

For $M \in \md(A)$ let
$$
P_1 \xrightarrow{f} P_0 \to M \to 0
$$
be a projective presentation of $M$.
We are mainly interested in the situation, where
$\rk(f) = r(P_1,P_0)$.
In this case,
we proved in \cite{BS25} that $M$ is $\tau$-regular, see
Section~\ref{subsec:introtauregular} for more details.

It is obvious that
$$
r(P_1^t,P_0^t) \ge t \cdot r(P_1,P_0)
$$
for all $t \ge 1$.

\begin{Qu}\label{qu:1}
When do we have
$$
r(P_1^t,P_0^t) = t \cdot r(P_1,P_0)
$$
for all $t \ge 1$?
\end{Qu}

\subsection{$\tau$-regular modules}\label{subsec:introtauregular}
This article deals with $\tau$-regular modules, which include all
$\tau$-rigid modules and also all modules of projective dimension at most one.

Let $S(1),\ldots,S(n)$ be the simple $A$-modules, up to isomorphism.
The \emph{dimension vector} of $M \in \md(A)$ is defined as
$$
\dimv(M) := ([M:S(1)],\ldots,[M:S(n)]) \in \N^n,
$$
where $[M:S(i)]$ denotes the \emph{Jordan-H\"older multiplicity} of
the simple module $S(i)$ in any composition series of $M$.
The module $M$ is \emph{sincere} if $[M:S(i)] \not= 0$ for
all $1 \le i \le n$.

For $\bd \in \N^n$ let
$\md(A,\bd)$ be the affine variety of $A$-modules
with dimension vector $\bd$.
For a definition of $\md(A,\bd)$ we refer to 
\cite[Section~3.1]{BS25}.

The corresponding product $\GL_\bd$ of general linear groups
acts on $\md(A,\bd)$ by conjugation.
The orbits of this action correspond to the isomorphism classes of $A$-modules with dimension vector $\bd$.
For $M \in \md(A,\bd)$ we denote its orbit by $\cO_M$.
Let $\irr(A,\bd)$ be the set of irreducible components of
$\md(A,\bd)$, and let
$$
\irr(A) := \bigcup_{\bd \in \N^n} \irr(A,\bd).
$$

For $M \in \md(A,\bd)$ let
$$
\md_M(A,\bd) :=
\bigcup_{\substack{\calZ \in \irr(A,\bd)\\M \in \calZ}} \calZ,
$$
and define
\begin{align*}
c_A(M) &:= \dim \md_M(A,\bd) - \dim \cO_M,
\\
e_A(M) &:= \dim \Ext_A^1(M,M),
\\
E_A(M) &:= \dim \Hom_A(M,\tau_A(M)).
\end{align*}

The number $E_A(M)$ is the
\emph{$E$-invariant} of $M$.
The module $M$ is called \emph{$\tau$-rigid} (or $\tau_A$-rigid)
if $E_A(M) = 0$.
These modules play a crucial role
in Derksen, Weyman and Zelevinsky's
\cite{DWZ08, DWZ10} representation theoretic approach to Fomin-Zelevinsky cluster algebras.
The study of $\tau$-rigid modules is part of $\tau$-tilting theory, which can be seen as a vast generalization of the classical tilting theory.
We refer to \cite{AIR14} for more details.

The following result is a direct consequence of Voigt's Lemma  \cite[Proposition~1.1]{G74} and the
Auslander-Reiten formulas.

\begin{Prop}\label{prop:voigt}
For all $M \in \md(A)$ we have
$$
c_A(M) \le e_A(M) \le E_A(M).
$$
\end{Prop}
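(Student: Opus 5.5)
We prove the two inequalities separately, following the hint in the text: the first comes from Voigt's Lemma and the second from the Auslander--Reiten formulas.

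\emph{First inequality $c_A(M) \le e_A(M)$.} Voigt's Lemma \cite[Proposition~1.1]{G74} gives an injective linear map from the normal space to the orbit into extensions,
$$
T_M(\md(A,\bd))/T_M(\cO_M) \hookrightarrow \Ext_A^1(M,M).
$$
Here $T_M(\md(A,\bd))$ denotes the Zariski tangent space of the scheme $\md(A,\bd)$ at the point $M$. We compare dimensions as follows.
\begin{itemize}
\item Since $\cO_M$ is smooth, $\dim T_M(\cO_M) = \dim \cO_M$.
\item For every irreducible component $\calZ$ containing $M$ we have $\dim \calZ \le \dim T_M(\calZ) \le \dim T_M(\md(A,\bd))$. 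This holds because the tangent space dimension at any point of a variety is at least the local dimension there, and $T_M(\calZ) \subseteq T_M(\md(A,\bd))$.
\item Hence $\dim \md_M(A,\bd) = \max_{\calZ \ni M} \dim \calZ \le \dim T_M(\md(A,\bd))$.
\end{itemize}
Subtracting $\dim \cO_M$ and using the injection gives
$$
c_A(M) \le \dim T_M(\md(A,\bd)) - \dim \cO_M \le e_A(M).
$$
One point needs care: $\md(A,\bd)$ may be non-reduced. The tangent space of the scheme only gets larger than that of the reduced variety, so the inequality still holds.

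\emph{Second inequality $e_A(M) \le E_A(M)$.} We use the Auslander--Reiten formula
$$
\Ext_A^1(M,N) \cong D\overline{\Hom}_A(N,\tau_A M),
$$
where $\overline{\Hom}$ denotes morphisms modulo those factoring through injective modules. Setting $N = M$ gives
$$
e_A(M) = \dim \overline{\Hom}_A(M,\tau_A M).
$$
The quotient $\overline{\Hom}_A(M,\tau_A M)$ of $\Hom_A(M,\tau_A M)$ has dimension at most $\dim \Hom_A(M,\tau_A M) = E_A(M)$.

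Neither step is hard. The only real subtlety is the scheme-versus-variety tangent space issue in the first step, which is handled by the observation above.
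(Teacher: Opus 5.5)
Your proof is correct and takes the route the paper indicates. The paper only states the result as a direct consequence of Voigt's Lemma and the Auslander--Reiten formulas. You apply Voigt's Lemma, together with smoothness of $\cO_M$ and $\dim \cZ \le \dim T_M(\cZ)$ for each component $\cZ$ through $M$, to get $c_A(M) \le e_A(M)$, and you apply $\Ext_A^1(M,M) \cong D\ov{\Hom}_A(M,\tau_A(M))$ to get $e_A(M) \le E_A(M)$, which fills in exactly the routine steps the paper leaves out.
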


Let $\Spec(A,\bd)$ be the affine scheme of $A$-modules
with dimension vector $\bd$.
For $M \in \md(A,\bd)$ the following are equivalent:
\begin{itemize}\itemsep2mm

\item[(i)]
$M$ is regular in $\Spec(A,\bd)$;

\item[(ii)]
$c_A(M) = e_A(M)$.

\end{itemize}
This follows again from Voigt's Lemma.

We call $M$ \emph{$\tau$-regular} (or \emph{$\tau_A$-regular})
if
$$
c_A(M) = E_A(M).
$$

\begin{Thm}[{\cite[Theorem~1.1]{BS25}}]\label{thm:BS25a}
Let $M \in \md(A)$, and
let
$$
P_1 \xrightarrow{f} P_0 \to M \to 0
$$ be a minimal projective presentation of $M$.
Then the following are equivalent:
\begin{itemize}\itemsep2mm

\item[(i)]
$M$ is $\tau$-regular;

\item[(ii)]
$\rk(f) = r(P_1,P_0)$.

\end{itemize}
\end{Thm}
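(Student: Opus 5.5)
The strategy is to translate both conditions into statements about the variety of presentations
$$
\Hom_A(P_1,P_0) \;\supseteq\; \cU := \{ g \in \Hom_A(P_1,P_0) \mid \rk(g) = r(P_1,P_0) \}.
$$
Fix $\bd = \dimv(M)$, so that $\dimv(P_0) - \rk(f)$ is the dimension of $\Coker(f)$. The set $\cU$ is open and dense in the affine space $\Hom_A(P_1,P_0)$, and hence irreducible. The group $G := \Aut_A(P_1) \times \Aut_A(P_0)$ acts on $\Hom_A(P_1,P_0)$. By the standard correspondence (used in \cite{BS25}), $G$-orbits of maps $g$ with $\Coker(g) \cong M$ correspond to the isomorphism class of $M$, and minimality of the presentation is an open condition on such $g$.

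We then compare dimensions in two ways. Our first step is to compute $\dim \cO_M$ through presentations. We use the fibration relating the set of pairs $(g, \text{basis of } \Coker(g))$ to $\md(A,\bd)$ on one side and to $\Hom_A(P_1,P_0)$ on the other. This gives
$$
\dim \cO_M - \dim \Hom_A(P_1,P_0) = \dim \GL_\bd - \dim G + \dim \operatorname{Stab}_G(f) - \dim \Aut_A(M) - \dim(G\cdot f\text{-codim}),
$$
and the codimension of $G\cdot f$ in $\Hom_A(P_1,P_0)$ equals $\dim \Hom_A(M,\tau_A M) = E_A(M)$. This is the standard identity
$$
\dim \Hom_A(P_1,P_0) - \dim (G f) = \dim \Hom_A(M,\tau M)
$$
for minimal presentations, which follows from the Auslander--Reiten formula via $\Hom_A(f,M)$ and the Nakayama functor.

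Our second step is to identify $\md_M(A,\bd)$ locally with the image of the set of maps of rank exactly $\rk(f)$. The map $g \mapsto \Coker(g)$ sends the locally closed subset $\{\rk(g)=\rk(f)\}$ of $\Hom_A(P_1,P_0)$, restricted to maps whose cokernel has top equal to $\tp(P_0)$, onto an open neighbourhood of $\cO_M$ in $\md(A,\bd)$ modulo the group actions. Hence the local dimension there satisfies
$$
c_A(M) = \dim_f \{ g \mid \rk(g) = \rk(f) \} - \dim(Gf).
$$
When $\rk(f) = r(P_1,P_0)$, this rank locus is the irreducible dense open set $\cU$, so $c_A(M) = \dim \Hom_A(P_1,P_0) - \dim(Gf) = E_A(M)$, proving (ii)$\Rightarrow$(i).

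For (i)$\Rightarrow$(ii), suppose $\rk(f) < r(P_1,P_0)$. Then the rank-$\rk(f)$ locus is contained in the proper closed subset $\Hom_A(P_1,P_0)\setminus \cU$ of an irreducible space, so every component through $f$ has dimension strictly less than $\dim \Hom_A(P_1,P_0)$. This forces $c_A(M) < E_A(M)$, and $M$ is not $\tau$-regular.

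The main obstacle is the second step: making rigorous that every module near $M$ in $\md(A,\bd)$ arises as the cokernel of a presentation near $f$ with the same $P_1,P_0$. Nearby modules may have smaller top, but upper semicontinuity of $\dim \tp$ goes the right way. We also need the dimension bookkeeping of stabilizers to match, namely $\dim \operatorname{Stab}_G(f)$ against $\dim \Aut_A(M)$ together with $\Hom_A(P_0, \Ima f)$-type terms. We would handle this with a principal-bundle argument, following \cite[Section~3]{BS25}.
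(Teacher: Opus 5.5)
This statement is only quoted in the paper (from \cite{BS25}), so there is no internal proof to compare with. Your architecture, a dimension count on $\Hom_A(P_1,P_0)$, is the natural one, and its outer layers are right. For minimal $f$ the normal space to $Gf$ is $\Hom_A(P_1,P_0)/(\Hom_A(P_0,P_0)f+f\Hom_A(P_1,P_1))\cong\Coker\Hom_A(f,M)$. This is dual to $\Hom_A(M,\tau_A(M))$ via the Nakayama functor $\nu$, since minimality gives $\tau_A(M)=\ker\nu(f)$. Once $c_A(M)=\dim_f H-\dim Gf$ is known for $H:=\{g\mid\rk(g)=\rk(f)\}$, both implications follow exactly as you say. (The display in your first step is a tautology; only the codimension statement is used.)

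\textbf{The gap.} That identity carries all the content, you only assert it, and the mechanism you sketch would fail. Restricting to maps $g$ with image in the radical of $P_0$ (cokernel with top $\tp(P_0)$) cuts out a closed linear subspace, whose cokernels form a closed set of modules, not an open one. Take $Q\colon 1\leftarrow 2$, $A=KQ$ and $M=S(1)\oplus S(2)$. Then $P_1=P(1)$, $P_0=P(1)\oplus P(2)$, $\Hom_A(P_1,P_0)\cong K^2$, $\dim Gf=1$ and $c_A(M)=1$. But the radical maps of rank $1$ form a punctured line, so your recipe would give $0$. The nearby module $P(2)$ has smaller top and arises only from non-radical maps. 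Likewise, upper semicontinuity of the top of $N$ only yields a surjection $P_0\to N$. You also need its kernel to be a quotient of $P_1$, which is the content of what the paper calls Fei's Lemma.

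\textbf{How to close it.} Factor through the quiver Grassmannian $\mathrm{Gr}$ of submodules of $P_0$ with dimension vector $\mathbf{e}:=\dimv(P_0)-\bd$, and put $U:=\Ima(f)$.

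First, the pairs $(N,\varphi)$ with $N\in\md(A,\bd)$ and $\varphi\in\Hom_A(P_0,N)$ surjective form an open subset $E$ of $\md(A,\bd)\times V$. Here $V$ is a fixed space of dimension $\dim\Hom_A(P_0,M)$, identified with each $\Hom_A(P_0,N)$. The set $E$ lies over the open set of quotients of $P_0$, and $(N,\varphi)\mapsto\ker(\varphi)$ is a principal $\GL_\bd$-bundle $E\to\mathrm{Gr}$. Hence $\dim_M\md(A,\bd)=\dim_U\mathrm{Gr}+\dim\GL_\bd-\dim\Hom_A(P_0,M)$.

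Second, near $f$ the stratum $H$ coincides with $\{g\mid\dimv\Ima(g)=\mathbf{e}\}$, because vertexwise ranks are lower semicontinuous. The map $g\mapsto\Ima(g)$ sends it onto the open subset of $\mathrm{Gr}$ of submodules that are quotients of $P_1$. This is where upper semicontinuity of the top belongs: applied to $\Ima(g)$, not to $\Coker(g)$. The fibre over $U'$ is the open set of surjections in $\Hom_A(P_1,U')$, whose dimension depends only on $\mathbf{e}$. So $\dim_fH=\dim_U\mathrm{Gr}+\dim\Hom_A(P_1,U)$.

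Third, the stabilizer bookkeeping. We have $\dim Gf=\dim(\Hom_A(P_0,P_0)f+f\Hom_A(P_1,P_1))$, since the stabilizer is open in a linear subspace. Also $f\Hom_A(P_1,P_1)=\Hom_A(P_1,U)$. Modulo this subspace, $\Hom_A(P_0,P_0)f$ becomes the image of $\Hom_A(f,M)\colon\Hom_A(P_0,M)\to\Hom_A(P_1,M)$, whose kernel is $\Hom_A(M,M)$. Thus $\dim Gf=\dim\Hom_A(P_1,U)+\dim\Hom_A(P_0,M)-\dim\Hom_A(M,M)$. Together with $\dim\cO_M=\dim\GL_\bd-\dim\Hom_A(M,M)$, everything cancels to $c_A(M)=\dim_fH-\dim Gf$, for every presentation of $M$.

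This gives slightly more than the statement. If $f\cong f'\oplus(P\xrightarrow{1_P}P)\oplus(P'\to 0)$ with $f'$ minimal, the normal space to $Gf$ has dimension $E_A(M)+\dim\Hom_A(P',M)$. So maximal rank together with Proposition~\ref{prop:voigt} forces $\Hom_A(P',M)=0$ and $c_A(M)=E_A(M)$. This is Theorem~\ref{thm:BS25b}(i), together with the vanishing the paper takes from \cite[Lemma~3.6]{BS25}.
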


Let
$$
\md^\tau(A,\bd) := \{ M \in \md(A,\bd) \mid \text{$M$ is
$\tau$-regular} \}.
$$
It is proved in \cite{BS25} that
$\md^\tau(A,\bd)$ is open in $\md(A,\bd)$.
As a consequence the Zariski closure
$$
\ov{\md^\tau(A,\bd)}
$$
is a union of irreducible components of $\md(A,\bd)$.
The set of these components is denoted by $\irr^\tau(A,\bd)$.
Let
$$
\irr^\tau(A) := \bigcup_{\bd \in \N^n} \irr^\tau(A,\bd).
$$
The components in $\irr^\tau(A)$ are called
\emph{generically $\tau$-regular} or just
\emph{$\tau$-regular} for short.
This class of irreducible components was (under a different name) first introduced in \cite{GLS12}.

For each $\tau$-regular $M \in \md(A)$ there is a unique component $\cZ \in \irr(A)$
with $M \in \cZ$.

An irreducible constructible subset $\cU$ of $\md(A,\bd)$ is
called \emph{generically $\tau$-regular} or just
\emph{$\tau$-regular} (resp. \emph{generically regular})
if it contains a $\tau$-regular (resp. regular) module.
In this case, the $\tau$-regular (resp. regular) modules form a dense open subset of $\cU$.

Throughout,
we use the word \emph{generic} in a slightly vague way.
For an irreducible constructible subset $\cU$ of some
affine variety,
we say that $M \in \cU$ is \emph{generic} in $\cU$ if it is contained
in some (small enough) dense open subset $\cV$ of $\cU$.
This subset $\cV$ consists of
the $M \in \cU$ such that some
upper (or lower) semicontinuous maps take their minimal (or maximal) value on $M$.
It will always be clear from the context, which semicontinuous maps
we are using.

\subsection{Hierarchy of modules}

The $\tau$-regular modules are a generalization of modules of projective dimension at most one, and of $\tau$-rigid modules.
Recall that $M \in \md(A)$ is \emph{rigid} if $\Ext_A^1(M,M) = 0$.
A rigid module $M$ is a \emph{partial tilting module} if
$\pdim(M) \le 1$.
Partial tilting modules are the direct summands of
tilting modules, which are the corner stone of classical tilting theory.

Let
\begin{align*}
\proj(A) &:= \{  M \in \md(A) \mid M \text{ is projective} \},
\\
\cP_{\le 1}(A) &:= \{ M \in \md(A) \mid \pdim(M) \le 1 \},
\\
\rigid(A) &:= \{ M \in \md(A) \mid \Ext_A^1(M,M) = 0 \},
\\
\partilt(A) &:= \{ M \in \md(A) \mid \Ext_A^1(M,M) = 0,\;
\pdim(M) \le 1 \},
\\
\taurigid(A) &:= \{ M \in \md(A) \mid \Hom_A(M,\tau_A(M)) = 0 \},
\\
\taureg(A) &:= \{ M \in \md(A) \mid M \text{ is $\tau$-regular} \}.
\end{align*}
We have the following hierarchy of classes of modules:
$$
\xymatrix@!@-6ex{
&\taureg(A) \ar@{-}[dl]\ar@{-}[dr] && \rigid(A)\ar@{-}[dl]
\\
\cP_{\le 1}(A)\ar@{-}[dr] && \taurigid(A)\ar@{-}[dl]
\\
& \partilt(A) \ar@{-}[d]
\\
& \proj(A)
}
$$
We have
$$
\partilt(A) := \cP_{\le 1}(A) \cap \rigid(A) =
\cP_{\le 1}(A) \cap \taurigid(A).
$$

\subsection{Main results}
For $\cZ \in \irr(A,\bd)$ and $t \ge 1$ let
$$
\cZ^t := \{ M \in \md(A,t \bd) \mid
M \cong M_1 \oplus \cdots \oplus M_t \text{ with }
M_i \in \cZ \text{ for all } i \}.
$$
This is an irreducible constructible subset of $\md(A,t \bd)$.

For $\cZ \in \irr(A)$ let
$$
\pdim(\cZ) := \min\{ \pdim(M) \mid M \in \cZ \}
$$
be the \emph{projective dimension} of $\cZ$.
By upper semicontinuity, the modules
$M \in \cZ$ with $\pdim(M) = \pdim(\cZ)$ form a dense
open subset of $\cZ$.
We also write $\pdim_A(\cZ)$ instead of $\pdim(\cZ)$.

\begin{Qus}
\quad
\begin{itemize}\itemsep2mm

\item[(i)]
Let $M \in \taureg(A)$.
Is $M^t \in \taureg(A)$ for all $t \ge 1$?

\item[(ii)]
Let $\cZ \in \irr^\tau(A)$.
Is $\cZ^t$ generically $\tau$-regular for all $t \ge 1$?

\item[(iii)]
Let $M \in \taureg(A)$.
Is there an ideal $I$ of $A$ with $I M = 0$ such that
$$
\pdim_B(M) \le 1
$$
where $B = A/I$?

\item[(iv)]
Let $\cZ \in \irr^\tau(A)$.
Is there an ideal $I$ of $A$ with $I \cZ = 0$ such that
$$
\pdim_B(\cZ) \le 1
$$
where $B = A/I$?

\end{itemize}
\end{Qus}

All four questions have a negative answer in general.
However, we get positive answers for large classes of algebras.
We will also see that
Questions~(i) and (ii) are strongly related to Questions~(iii) and (iv).

The equivalence (ii) $\iff$ (iii) in
the following theorem
is a direct consequence of Theorem~\ref{thm:BS25a}.

\begin{Thm}\label{thm:intromain1.4}
Let $\calZ \in \irr^\tau(A)$.
Then the following are equivalent:
\begin{itemize}\itemsep2mm

\item[(i)]
$\calZ^t$ is $\tau$-regular for all $t \ge 1$;

\item[(ii)]
For all $M \in \cZ \cap \taureg(A)$,
we have $M^t \in \taureg(A)$ for all $t \ge 1$;

\item[(iii)]
For each $M \in \cZ \cap \taureg(A)$ and a
minimal projective presentation
$$
P_1 \to P_0 \to M \to 0
$$
of $M$, we have
$$
r(P_1^t,P_0^t) = t \cdot r(P_1,P_0)
$$
for all $t \ge 1$.

\end{itemize}
\end{Thm}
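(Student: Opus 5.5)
The plan is to prove (ii)$\iff$(iii) directly from Theorem~\ref{thm:BS25a}, and then (i)$\iff$(ii) by a genericity argument on $\cZ^t$.

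For (ii)$\iff$(iii), fix $M \in \cZ \cap \taureg(A)$ with minimal projective presentation $P_1 \xrightarrow{f} P_0 \to M \to 0$. Then $P_1^t \xrightarrow{f^{\oplus t}} P_0^t \to M^t \to 0$ is a minimal projective presentation of $M^t$. By Theorem~\ref{thm:BS25a}, $M$ being $\tau$-regular gives $\rk(f) = r(P_1,P_0)$, so $\rk(f^{\oplus t}) = t\cdot r(P_1,P_0)$. Applying Theorem~\ref{thm:BS25a} again, $M^t$ is $\tau$-regular iff $\rk(f^{\oplus t}) = r(P_1^t,P_0^t)$, i.e.\ iff $r(P_1^t,P_0^t) = t\cdot r(P_1,P_0)$. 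Hence the conditions in (ii) and (iii) agree module by module.

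(ii)$\Rightarrow$(i) is immediate. The set $\cZ^t$ contains $M^t$ for any $\tau$-regular $M \in \cZ$, and such $M$ exist since $\cZ \in \irr^\tau(A)$. By (ii), $M^t$ is $\tau$-regular, so $\cZ^t$ is generically $\tau$-regular by definition.

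For (i)$\Rightarrow$(ii) I would argue as follows.
\begin{itemize}
\item Since $\cZ^t$ is $\tau$-regular, the $\tau$-regular modules form a dense open subset of $\cZ^t$.
\item The image of the dense open subset $(\cZ\cap\taureg(A))^t$ of $\cZ\times\cdots\times\cZ$ under the direct sum map is dense in $\cZ^t$. So there are $\tau$-regular $M_1,\dots,M_t \in \cZ$ with $M_1\oplus\cdots\oplus M_t$ $\tau$-regular.
\item The $\tau$-regular modules in $\cZ$ form an irreducible open set. On it the minimal projective presentation has constant projective terms $P_1,P_0$: these are determined by $\tp(M)$ and $\tp(\Omega M)$, i.e.\ by $\dim\Hom_A(M,S(i))$ and $\dim\Ext^1_A(M,S(i))$. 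These functions are upper semicontinuous, so they are generically constant, and one must check they are constant on all of $\cZ\cap\taureg(A)$.
\item If that holds, $M_1\oplus\cdots\oplus M_t$ has minimal presentation $P_1^t \to P_0^t$ with rank $t\cdot r(P_1,P_0)$. Its $\tau$-regularity gives $r(P_1^t,P_0^t)=t\cdot r(P_1,P_0)$ by Theorem~\ref{thm:BS25a}, which is (iii) and hence (ii).
\end{itemize}

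The main obstacle is the constancy of $(P_1,P_0)$ on all of $\cZ\cap\taureg(A)$, not just generically. To handle it I would use that for $\tau$-regular $M$ the rank condition $\rk(f)=r(P_1,P_0)$ is a maximality condition. If a $\tau$-regular $M$ degenerated from a generic one but had larger tops, its minimal presentation would be the generic presentation plus a split summand $Q\xrightarrow{1}Q$, up to adjustments. Then $\dimv M = \dimv P_0-\dimv P_1+\dots$ and the dimension count $c_A(M)=E_A(M)$ from \cite{BS25} should force $Q=0$. If this fails, I would instead rephrase (iii) so that $P_1,P_0$ depend on $M$, and run the density argument near each given $\tau$-regular $M$ using openness of $\taureg$ in $\md(A,t\bd)$.
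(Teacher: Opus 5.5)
Your arguments for (ii)$\iff$(iii) and for (ii)$\Rightarrow$(i) are correct and coincide with the paper's. There is a genuine gap in (i)$\Rightarrow$(ii).

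The terms of the minimal projective presentation are \emph{not} constant on $\cZ\cap\taureg(A)$. Moreover, $c_A(M)=E_A(M)$ does not force the extra split summand to vanish. The Example following Theorem~\ref{thm:sincere} shows this. There $\cZ=\ov{\cO_{P(3)}}$ has generic presentation $0\to P(3)$. By contrast, $M=S(2)\oplus S(3)\in\cZ$ is $\tau$-regular with minimal presentation $P(1)\oplus P(2)\to P(2)\oplus P(3)$, which differs from the generic one by $(P(2)\xrightarrow{1}P(2))\oplus(P(1)\to 0)$. (Similarly, for the Kronecker algebra and $\bd=(1,1)$, the $\tau$-regular module $S(1)\oplus S(2)$ has minimal presentation $P(1)^2\to P(1)\oplus P(2)$ instead of the generic $P(1)\to P(2)$.)

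Hence your density argument only yields $r(P_1^t,P_0^t)=t\cdot r(P_1,P_0)$ for the \emph{generic} pair. It says nothing about the pair attached to a degenerate $\tau$-regular $M$, which is what (iii), and hence (ii), requires. Your fallback does not close this. Openness of the $\tau$-regular locus propagates $\tau$-regularity from a point to a neighbourhood, never from generic points of $\cZ^t$ to the special point $M^t$. Compare Example~(ii) in Section~\ref{subsec:reduction1}, where a generically $\tau$-regular component contains the non-$\tau$-regular module $S(1)\oplus S(2)$. The $\tau$-regularity of $M$ itself must enter quantitatively.

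What is missing is a bridge from the pair $(P_1,P_0)$ of an arbitrary $\tau$-regular $M\in\cZ$ to generic data. The paper gets it from Fei's Lemma \cite[Lemma~3.2]{BS25}:
\begin{itemize}
\item A generic $f_i\in\Hom_A(P_1,P_0)$ has cokernel $N_i$ generic in $\cZ$, and $\rk(f_i)=r(P_1,P_0)$.
\item Such $f_i$ splits as $f_i'\oplus 1_P\oplus(P'\to 0)$, with $f_i'$ a minimal presentation of $N_i$ and $P,P'$ independent of $i$ \cite[Lemma~2.5]{BS25}. By \cite[Lemma~3.6]{BS25}, $\Hom_A(P',N_i)=0$.
\item Choose $N=N_1\oplus\cdots\oplus N_t$ $\tau$-regular; this is where (i) enters. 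Then $\mathrm{diag}(f_1',\ldots,f_t')$ has maximal rank by Theorem~\ref{thm:BS25a}.
\item \cite[Lemma~3.6]{BS25} now gives $r(P_1^t,P_0^t)=\rk(\mathrm{diag}(f_1,\ldots,f_t))=t\cdot r(P_1,P_0)$.
\end{itemize}

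Alternatively, your dimension-count idea can be rescued if you aim it at $M^t$ rather than at $Q=0$. Let $\cZ_t$ be the unique component containing $\cZ^t$ and take $N$ as above. Then $c_A(M^t)\ge\dim\cZ_t-\dim\cO_{M^t}$ and $\dim\cZ_t=\dim\cO_N+E_A(N)$. For a minimal presentation $P_1^X\to P_0^X$ of $X$ one has $E_A(X,Y)-\dim\Hom_A(X,Y)=\dim\Hom_A(P_1^X,Y)-\dim\Hom_A(P_0^X,Y)$. Apply this to the generic $N_i$, which share their presentation terms, together with $E_A(M)-\dim\Hom_A(M,M)=\dim\cZ-\dim\GL_\bd=E_A(N_1)-\dim\Hom_A(N_1,N_1)$. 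This gives $c_A(M^t)\ge t^2E_A(M)=E_A(M^t)$, and Proposition~\ref{prop:voigt} yields equality.
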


The next theorem can be seen as a variation of Theorem~\ref{thm:intromain1.4}.
Note however, that in Theorem~\ref{thm:intromain1.5}(iii) we allow
pairs $(P_1,P_0)$ of projective $A$-modules, which do not appear
in any minimal projective presentation.

\begin{Thm}\label{thm:intromain1.5}
The following are equivalent:
\begin{itemize}\itemsep2mm

\item[(i)]
For all $\cZ \in \irr^\tau(A)$, we have
$\cZ^t$ is $\tau$-regular for all $t \ge 1$;

\item[(ii)]
For all $M \in \taureg(A)$, we have
$M^t \in \taureg(A)$ for all $t \ge 1$;

\item[(iii)]
For all $P_1,P_0 \in \proj(A)$, we have
$$
r(P_1^t,P_0^t) = t \cdot r(P_1,P_0)
$$
for all $t \ge 1$.

\end{itemize}
\end{Thm}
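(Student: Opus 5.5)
The plan is to prove the cycle (i) $\iff$ (ii), (iii) $\Rightarrow$ (ii) and (ii) $\Rightarrow$ (iii). For (i) $\iff$ (ii) I reduce to Theorem~\ref{thm:intromain1.4}. Each $M \in \taureg(A)$ lies in a unique component $\cZ$, and this $\cZ$ belongs to $\irr^\tau(A)$. Hence condition (ii) of Theorem~\ref{thm:intromain1.5} holds if and only if condition (ii) of Theorem~\ref{thm:intromain1.4} holds for every $\cZ \in \irr^\tau(A)$. By Theorem~\ref{thm:intromain1.4}, that is equivalent to (i).

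For (iii) $\Rightarrow$ (ii), let $M$ be $\tau$-regular with minimal presentation $P_1 \xrightarrow{f} P_0 \to M \to 0$. The direct sum $f^t \colon P_1^t \to P_0^t$ is a minimal projective presentation of $M^t$, since minimality is preserved under finite direct sums. By Theorem~\ref{thm:BS25a} we have $\rk(f) = r(P_1,P_0)$. Therefore $\rk(f^t) = t\,\rk(f) = t\, r(P_1,P_0)$, and by (iii) this equals $r(P_1^t,P_0^t)$. Theorem~\ref{thm:BS25a} then gives $M^t \in \taureg(A)$.

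For (ii) $\Rightarrow$ (iii), take arbitrary $P_1, P_0 \in \proj(A)$ and choose $f \colon P_1 \to P_0$ with $\rk(f) = r(P_1,P_0)$; put $M = \Coker(f)$. Then $f$ decomposes as
$$
f \cong f' \oplus \mathrm{id}_Q \oplus (R \to 0),
$$
where $f' \colon P_1' \to P_0'$ is a minimal presentation of $M$, $P_1 = P_1' \oplus Q \oplus R$ and $P_0 = P_0' \oplus Q$. Since $\rk(f) = \rk(f') + \dim Q$ and $r(P_1,P_0) \ge r(P_1',P_0') + \dim Q$, maximality of $\rk(f)$ forces $\rk(f') = r(P_1',P_0')$. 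So $M$ is $\tau$-regular by Theorem~\ref{thm:BS25a}, and by (ii) so is $M^t$. It remains to conclude that $f^t$ has maximal rank in $\Hom_A(P_1^t,P_0^t)$. This gives $r(P_1^t,P_0^t) = t\, r(P_1,P_0)$, because the inequality $\ge$ is obvious.

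The main obstacle is the following key lemma: for an arbitrary, not necessarily minimal, projective presentation $g \colon X_1 \to X_0$ of a module $N$, if $N$ is $\tau$-regular then $\rk(g) = r(X_1,X_0)$. (The converse was already used above.) Applied to $g = f^t$ and $N = M^t$, this closes the argument. By the decomposition above, it suffices to show that adding a summand $\mathrm{id}_Q$ or a summand $R \to 0$ to a maximal rank presentation keeps it of maximal rank. The crude estimate $r(X_1 \oplus Q, X_0 \oplus Q) \le r(X_1,X_0) + 2\dim Q$ is not enough. Instead I would redo the dimension count from \cite{BS25} for a non-minimal presentation. The maps of maximal rank form a dense open subset of $\Hom_A(X_1,X_0)$, and for $h$ in it the generic cokernel has dimension $\dim X_0 - r(X_1,X_0)$. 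Consider the map $h \mapsto \Coker(h)$ into the module variety. Its fibres are orbits of $\Aut(X_1) \times \Aut(X_0)$, which have an explicit dimension. This identifies $c_A(N) - E_A(N)$ with a quantity comparing $\rk(g)$ to $r(X_1,X_0)$, with the same kind of correction terms as in the minimal case. I then expect the contributions of $Q$ and $R$ to cancel on both sides, which would give the lemma. I would first try to extract exactly this from the proof of Theorem~\ref{thm:BS25a}; its statement in the introduction already asserts one direction for arbitrary presentations.
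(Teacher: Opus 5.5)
Your arguments for (i)~$\iff$~(ii) and (iii)~$\Rightarrow$~(ii) are fine and agree with the paper. So is your reduction of (ii)~$\Rightarrow$~(iii) to showing that $f^{\oplus t}$ has maximal rank. The gap is the key lemma you propose for this last step: as stated it is false. For any nonzero $P \in \proj(A)$, the sequence $P \xrightarrow{0} P \to P \to 0$ is a projective presentation of the $\tau$-regular (indeed projective) module $P$, but $\rk(0) = 0 < \dim P = r(P,P)$.

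More generally, let $g\colon X_1 \to X_0$ present $N$ with $g \cong g' \oplus 1_Q \oplus (R \to 0)$ and $g'$ minimal. If $\Hom_A(R,N) \neq 0$, projectivity of $R$ gives some $\phi\colon R \to X_0$ whose composite with $X_0 \to N$ is nonzero. Putting $\phi$ in place of the zero map on the summand $R$ strictly enlarges the image $\Ima(g) = \ker(X_0 \to N)$. So adding a summand $R \to 0$ does not preserve maximal rank, and the contributions of $R$ in your dimension count do not cancel. Concretely, the codimension of the $\Aut_A(X_1) \times \Aut_A(X_0)$-orbit of $g$ equals $E_A(N) + \dim \Hom_A(R,N)$. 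This is why Theorem~\ref{thm:BS25b}(ii) is stated only for minimal presentations.

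The missing ingredient is a Hom-vanishing condition on the $R$-part. The paper uses \cite[Lemma~3.6]{BS25} in both directions: for $g \cong g' \oplus 1_Q \oplus (R \to 0)$ with $g'\colon X_1' \to X_0'$ a minimal presentation of $N$, one has $\rk(g) = r(X_1,X_0)$ if and only if $\rk(g') = r(X_1',X_0')$ and $\Hom_A(R,N) = 0$.

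In your setting, apply the perturbation argument above to your maximal rank map $f$. This gives $\Hom_A(R,M) = 0$, hence $\Hom_A(R^t,M^t) = 0$. Next, $\tau$-regularity of $M^t$ and Theorem~\ref{thm:BS25a} give $\rk((f')^{\oplus t}) = r((P_1')^t,(P_0')^t)$. Then \cite[Lemma~3.6]{BS25} yields $\rk(f^{\oplus t}) = r(P_1^t,P_0^t) = t \cdot r(P_1,P_0)$, which is exactly the paper's argument.

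The point is that the step to $f^{\oplus t}$ must use that $f$ has maximal rank in all of $\Hom_A(P_1,P_0)$. It is not enough that $\Coker(f)$ is $\tau$-regular, and your proposed lemma keeps only this weaker fact.
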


\begin{Thm}\label{thm:intromain1.6}
The following classes of algebras $A$ satisfy the condition
$$
r(P_1^t,P_0^t) = t \cdot r(P_1,P_0)
$$
for all $P_1,P_0 \in \proj(A)$ and $t \ge 1$:
\begin{itemize}\itemsep2mm

\item[(i)]
$\tau$-tilting finite algebras;

\item[(ii)]
Tame algebras;

\item[(iii)]
Algebras $A$ with $\taureg(A) = \cP_{\le 1}(A)$.

\end{itemize}
\end{Thm}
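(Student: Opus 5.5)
By Theorem~\ref{thm:intromain1.5}, it suffices in each case to verify condition (ii) of that theorem: whenever $M \in \taureg(A)$, also $M^t \in \taureg(A)$ for all $t \ge 1$. Working with modules rather than ranks lets us use the geometric characterization $c_A(M) = E_A(M)$. Throughout we use two facts: $E_A$ is quadratic, $E_A(M^t) = t^2 E_A(M)$, and $\dim \cO_{M^t}$ can be computed from $\dim \End_A(M^t) = t^2 \dim \End_A(M)$.

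For (i), suppose $A$ is $\tau$-tilting finite. By the theory of \cite{AIR14} and its geometric counterpart, every generically $\tau$-regular component then contains a dense orbit of a $\tau$-rigid module. I would first show that a $\tau$-regular module $M$ is generic in its unique component $\cZ$, so $M$ lies in the open orbit and is therefore $\tau$-rigid. Then $M^t$ is $\tau$-rigid, since $E_A(M^t) = t^2 E_A(M) = 0$. Because $\tau$-rigid modules are $\tau$-regular (Proposition~\ref{prop:voigt} gives $0 \le c_A \le E_A = 0$), this gives $M^t \in \taureg(A)$. The key input is that, for $\tau$-tilting finite algebras, $\tau$-regular components have dense orbits; I expect this to follow from the finiteness of $g$-vectors / $\tau$-rigid pairs.

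For (iii), suppose $\taureg(A) = \cP_{\le 1}(A)$. Since $\pdim(M^t) = \pdim(M) \le 1$, $M^t$ again lies in $\cP_{\le 1}(A) = \taureg(A)$, and this case is immediate.

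For (ii), the tame case, I expect the main obstacle. My plan is to compute $c_A(M^t)$ directly. By Theorem~\ref{thm:BS25a} it would suffice to show $r(P_1^t,P_0^t) = t\,r(P_1,P_0)$, but for general pairs $(P_1,P_0)$ I would instead work with a generically $\tau$-regular component $\cZ$ and a generic decomposition of $\cZ$ into components $\cZ_i$. For tame algebras, generic modules in $\cZ$ come in at most one-parameter families: by Drozd's theorem and the work of Geiss--Leclerc--Schr\"oer on tame components, each $\cZ_i$ is either a single orbit or a one-parameter family of bricks-like modules. One must then show that $\ov{\cZ^t}$ is an irreducible component of $\md(A,t\bd)$ whose dimension satisfies
\[
\dim \ov{\cZ^t} - \dim \cO_{M^t} = t^2 E_A(M)
\]
for generic $M$. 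In the one-parameter case, generic $M^t$ should be replaced by a direct sum of $t$ non-isomorphic members $M_1 \oplus \cdots \oplus M_t$ of the family. For these, the mixed terms $\Hom_A(M_i,\tau M_j)$ for $i \ne j$ should have the same dimension as the diagonal ones; tameness, specifically that $\tau M_\lambda \cong M_\lambda$ generically (homogeneous tubes), should force exactly this. The count then matches the parameter contribution of $t$ in $c_A$. Passing from this generic direct sum back to $M^t$ uses Theorem~\ref{thm:intromain1.4}, since $\tau$-regularity of $\cZ^t$ is all that is needed. Making this dimension count rigorous for arbitrary tame algebras, beyond the generic tube-like behaviour, is the step I expect to be hardest.
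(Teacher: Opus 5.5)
Part (iii) is correct and is exactly the paper's argument. Parts (i) and (ii) have genuine gaps.

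\textbf{Part (i).} The step ``a $\tau$-regular module $M$ is generic in its component, hence lies in the dense orbit and is $\tau$-rigid'' is false. Take $A$ to be the path algebra of $1 \leftarrow 2$, which is representation-finite and hence $\tau$-tilting finite, and let $M = S(1)\oplus S(2)$. Then $\md(A,(1,1)) = K$ and $\cO_M = \{0\}$, so $c_A(M) = 1$. Also $E_A(M) = \dim \Hom_A(S(1)\oplus S(2), S(1)) = 1$. Thus $M$ is $\tau$-regular (as is every module over a hereditary algebra), but it is not $\tau$-rigid, since it is a proper degeneration of $P(2)$. Having a minimal presentation of maximal rank is an open condition, but that open set can be strictly larger than the dense orbit.

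The repair is to argue at the level of components, as the paper does. Each $\cZ \in \irr^\tau(A)$ equals $\ov{\cO_N}$ with $N$ $\tau$-rigid (by \cite{DIJ19} and \cite{A21}). Then $N^t \in \cZ^t$ is $\tau$-rigid, so $\cZ^t$ is $\tau$-regular. This is condition (i) of Theorem~\ref{thm:intromain1.5}, and Lemma~\ref{lem:directsums1} transfers it to every $\tau$-regular $M \in \cZ$. You invoke exactly this transfer in your tame case, so you can use it here too.

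\textbf{Part (ii).} You do not give a proof, and the heuristic you propose points the wrong way. Let $N = M_1 \oplus \cdots \oplus M_t$ with each $M_i$ generic in $\cZ$, and suppose $\ov{\cZ^t}$ is a component. Then $c_A(N) = t\,c_A(\cZ)$ by Lemma~\ref{lem:special1}, whereas $E_A(N) = \sum_i E_A(M_i) + \sum_{i \ne j} \dim \Hom_A(M_j,\tau_A(M_i))$ with $E_A(M_i) = c_A(\cZ)$. So $N$ is $\tau$-regular exactly when the mixed terms \emph{vanish}. If they equalled the diagonal terms, as you suggest, you would get $E_A(N) = t^2 c_A(\cZ) > c_A(N)$ whenever $c_A(\cZ) \ge 1$ and $t \ge 2$. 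For example, for the Kronecker algebra with $\bd=(1,1)$ and regular simples $M_\lambda$, one has $\Hom_A(M_\lambda,\tau_A(M_\mu)) = \Hom_A(M_\lambda,M_\mu) = 0$ for $\lambda \ne \mu$, but it equals $K$ for $\lambda = \mu$.

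The missing key input is the vanishing $E_A(\cZ,\cZ) = 0$ for every $\cZ \in \irr^\tau(A)$ when $A$ is tame. This is \cite[Theorem~3.8]{PYK23} (compare \cite[Corollary~1.7]{GLFS24}), and it rests on Crawley-Boevey's result \cite[Theorem~D]{CB88} that, over a tame algebra, all but finitely many indecomposables of each dimension satisfy $\tau_A(M) \cong M$. Combined with Theorem~\ref{thm:CLFS15}, it gives $\ov{\cZ^t} \in \irr^\tau(A)$ directly, with no dimension count needed. Your direct count would also have to handle decomposable $\cZ$, where one needs $E_A(\cZ_i,\cZ_i) = 0$ for every summand of the canonical decomposition, not just for a single one-parameter family.
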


Examples of $\tau$-tilting finite algebras are
representation-finite algebras, dense orbit algebras,
preprojective algebras of Dynkin type, and local algebras.
The algebras mentioned in Theorem~\ref{thm:intromain1.6}(iii) include all hereditary algebras,
and (by a result of Pfeifer \cite[Theorem~1.1]{Pf25}) also the generalized species algebras $H(C,D,\Omega)$ introduced in \cite{GLS17}.

In Section~\ref{subsec:ex4} we present an example of
a wild algebra $A = KQ/I$ with $\gldim(A) = 2$ and some $M \in \md(A)$ such that
$M$ is $\tau$-regular and $M \oplus M$ is neither $\tau$-regular
nor regular.
For a minimal projective presentation
$$
P_1 \to P_0 \to M \to 0
$$
of this module $M$, we have
$$
r(P_1^2,P_0^2) > 2 \cdot r(P_1,P_0)
\text{\quad and \quad}
r(P_1^{2t},P_0^{2t}) = t \cdot r(P_1^2,P_0^2)
$$
for all $t \ge 1$.

We do not know the answer to the following question.

\begin{Qu}
Let $P_1,P_0 \in \proj(A)$.
Is there some $s \ge 1$ such that
$$
r(P_1^{st},P_0^{st}) = t \cdot r(P_1^s,P_0^s)
$$
for all $t \ge 1$?
\end{Qu}

Let $\cZ \in \irr(A)$.
Then $\cZ$ is \emph{sincere}
if it contains a sincere module.

\begin{Thm}\label{thm:intromain1.7}
Let $\calZ \in \irr^\tau(A)$ be sincere.
Then the following are equivalent:
\begin{itemize}\itemsep2mm

\item[(i)]
$\pdim(\calZ) \le 1$;

\item[(ii)]
$\calZ \cap \taureg(A) = \calZ \cap \cP_{\le 1}(A)$.

\end{itemize}
\end{Thm}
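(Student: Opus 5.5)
The implication (ii)$\Rightarrow$(i) is immediate. Since $\cZ \in \irr^\tau(A)$, the $\tau$-regular modules form a dense open subset of $\cZ$. By (ii) every such module has projective dimension at most one. So a generic $M \in \cZ$ satisfies $\pdim(M) \le 1$, and therefore $\pdim(\cZ) \le 1$. Sincerity is not needed here.

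For (i)$\Rightarrow$(ii), one inclusion holds for free: $\cP_{\le 1}(A) \subseteq \taureg(A)$ by the hierarchy, so $\cZ \cap \cP_{\le 1}(A) \subseteq \cZ \cap \taureg(A)$. The content is the reverse inclusion. Fix $M \in \cZ \cap \taureg(A)$ with minimal projective presentation $P_1 \xrightarrow{f} P_0 \to M \to 0$, and put $\Omega_f := \operatorname{Ker}(f)$. I plan to use the openness results of \cite{BS25} together with Theorem~\ref{thm:BS25a}. These give $\rk(f) = r(P_1,P_0)$, so $f$ lies in the open set $\cV \subseteq \Hom_A(P_1,P_0)$ of maps of maximal rank. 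For $g \in \cV$ the cokernels have the fixed dimension vector $\bd$, and the module $\Coker(g)$ lies in the unique component containing $M$, namely $\cZ$. Hence a generic $g \in \cV$ presents a generic $N \in \cZ$. By (i), such an $N$ has $\pdim(N) \le 1$, so $\operatorname{Ker}(g)$ is projective. Since the rank is constant on $\cV$, the kernels $\operatorname{Ker}(g)$ form a vector bundle over $\cV$, that is, a flat family of modules of the fixed dimension vector $\dimv(P_1) - \dimv(P_0) + \bd$. Consequently $\Omega_f$ is a degeneration of a projective module $Q$ with the same dimension vector.

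To show $\Omega_f$ is projective, I would compare tops, and this is where sincerity should enter. Apply $\Hom_A(-,S(i))$ to $0 \to \Omega_f \to P_1 \to P_0 \to M \to 0$. This gives
$$
\dim \Ext^2_A(M,S(i)) = \dim\Hom_A(\Omega_f,S(i)) - \dim\Hom_A(P_1,S(i)) + \dim\Hom_A(P_0,S(i)) - \dim\Hom_A(M,S(i)) + \dim\Ext^1_A(M,S(i)).
$$
It suffices to prove $\dim\Hom_A(\Omega_f,S(i)) \le \dim\Hom_A(Q,S(i))$ for all $i$. Indeed, the projective cover of $\Omega_f$ would then be dominated by $Q$, which has the same dimension vector as $\Omega_f$, forcing $\Omega_f \cong Q$. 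Upper semicontinuity only gives the opposite inequality, so this is the main obstacle.

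My first attempt will compute $E_A$ through the presentation, using $E_A(X) = \dim\Hom_A(X,X) - \dim\Hom_A(P_0,X) + \dim\Hom_A(P_1,X)$, and combine it with $c_A(M) = E_A(M) = E_A(N) + (\dim\cO_N - \dim\cO_M)$. The aim is to show that any excess in $\dim \Hom_A(\Omega_f,S(i))$ would produce homomorphisms $P_1 \to P_0$ of rank larger than $\rk(f)$, contradicting maximality. Sincerity of $\cZ$ is used to guarantee that every simple $S(i)$ actually occurs in $M$ and in $N$. This is what should allow the rank-increasing perturbation of $f$ to be built, by composing through $P(i) \to M$ and similarly for $N$, for each $i$ separately. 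Without sincerity, a simple missing from $\cZ$ could contribute extra top to $\Omega_f$ invisibly, which I expect is exactly the source of counterexamples.
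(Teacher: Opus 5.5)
Your proof of (ii)$\Rightarrow$(i) is correct. For (i)$\Rightarrow$(ii), your setup matches the paper's: you compare $f$ with a generic $g\colon P_1 \to P_0$ of the same maximal rank whose cokernel $N$ is generic in $\cZ$, so that $\pdim(N) \le 1$. However, the proof has a genuine gap at exactly the decisive point. The inequality $\dim\Hom_A(\Omega_f,S(i)) \le \dim\Hom_A(Q,S(i))$ is left open as the ``main obstacle'', and the suggested perturbation of $f$ through maps $P(i) \to M$ does not supply it. It cannot work as phrased, for two reasons. First, $f$ already has maximal rank, so no perturbation of $f$ can increase the rank. Second, $\Omega_f$ is not a direct summand of $P_1$, so there is no summand on which to modify $f$. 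Indeed, the non-sincere example after Theorem~\ref{thm:sincere} shows that $\Omega_f \neq 0$ is compatible with $\rk(f) = r(P_1,P_0)$ in general. So the contradiction cannot be produced on $f$ itself.

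The missing idea is where sincerity enters: apply it to the generic kernel $Q = \operatorname{Ker}(g)$, and it shows $Q = 0$, not merely that $Q$ is projective. Since $\pdim(N) \le 1$, the kernel $\Ima(g) = \operatorname{Ker}(P_0 \to N)$ is projective, by Schanuel. Hence $P_1 = Q \oplus R$ with $g|_Q = 0$ and $g|_R$ injective; let $\pi_Q\colon P_1 \to Q$ be the projection. Suppose $Q \neq 0$. Sincerity of $N$ gives a nonzero map $Q \to N$, which lifts to $\varphi\colon Q \to P_0$ with $\varphi(Q) \not\subseteq \Ima(g)$. Then $g + \varphi\pi_Q$ has image $g(R) + \varphi(Q) \supsetneq \Ima(g)$, contradicting $\rk(g) = r(P_1,P_0)$. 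This is the statement $\Hom_A(P',N) = 0$ from \cite[Lemma~3.6]{BS25} that the paper uses, applied to the summand $P' \to 0$ of $g$. So $g$ is injective. Since $\rk(f) = \rk(g)$ and both maps have domain $P_1$, also $\operatorname{Ker}(f) = 0$, i.e.\ $\pdim(M) \le 1$. In your own terms, the vector bundle of kernels has rank $\dim Q = 0$, so no degeneration argument or comparison of tops is needed.
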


For $M \in \md(A)$ let
$$
I_M := \{ a \in A \mid aM = 0 \}.
$$
This is an ideal in $A$.
Then $M$ is \emph{faithful} if $I_M = 0$.

Note that for $M \cong N$ we have $I_M = I_N$.

For each constructible subset $\calU$ of $\md(A,\bd)$ let
$$
I_\calU :=
\{ a \in A \mid aM = 0 \text{ for all }
M \in \calU \} = \bigcap_{M \in \cU} I_M.
$$
This is again an ideal in $A$.
We have
$$
I_{\ov{\cU}} = I_{\cU},
$$
where $\ov{\cU}$ denotes the Zariski closure of $\cU$.

A component $\cZ \in \irr(A)$ is
\emph{faithful} if $I_\cZ = 0$, and
$\cZ$ is
\emph{strongly faithful} if $\cZ$ contains a faithful module.
Strongly faithful components are faithful.
In general, the converse does not hold.
These two types of components where first studied by Mousavand and Paquette \cite{MP23}.

Note that faithful modules and faithful components are always sincere.
The converse does not hold in general.

\begin{Thm}\label{thm:intromain1.8}
Let $\calZ \in \irr^\tau(A)$ be faithful.
Then the following are equivalent:
\begin{itemize}\itemsep2mm

\item[(i)]
$\pdim(\calZ) \le 1$;

\item[(ii)]
$\calZ^t$ is $\tau$-regular for all $t \ge 1$.

\end{itemize}
\end{Thm}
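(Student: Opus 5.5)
We prove the two implications separately.

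\emph{(i) $\Rightarrow$ (ii).} Suppose $\pdim(\cZ) \le 1$. The modules $M \in \cZ$ with $\pdim(M) \le 1$ form a dense open subset of $\cZ$. For such $M$ and any $t \ge 1$, the module $M^t$ also has projective dimension at most one. Modules of projective dimension at most one are $\tau$-regular; this is the hierarchy, which follows from Theorem~\ref{thm:BS25a}, or directly because the map $f$ in a minimal presentation of such a module is injective and hence of maximal rank. Since $M^t \in \cZ^t$, the set $\cZ^t$ contains a $\tau$-regular module and is therefore generically $\tau$-regular. No faithfulness is needed for this direction.

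\emph{(ii) $\Rightarrow$ (i).} This is the substantial direction. Assume $\cZ^t$ is $\tau$-regular for all $t$. Since $\cZ$ is faithful, $I_\cZ = 0$. Because $A$ is finite-dimensional, finitely many modules already detect this: there are $M_1,\ldots,M_s \in \cZ$ with $\bigcap_i I_{M_i} = 0$. These may be taken generic, i.e.\ in the dense open set of $\tau$-regular modules, since each $I_M$ and the intersection are determined by linear conditions that are open in $\cZ$. Set $t := s$ and $N := M_1 \oplus \cdots \oplus M_s \in \cZ^s$. Then $N$ is faithful.

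Next we use that $\cZ^s$ is generically $\tau$-regular. A generic point of $\cZ^s$ is a direct sum of $s$ generic points of $\cZ$. Since faithfulness of $N$ is an open condition on $\ov{\cZ^s}$ (linear maps $A \to \operatorname{End}_K(K^{s\bd})$ being injective), a generic $N' \in \cZ^s$ is both faithful and $\tau$-regular. So we obtain a faithful $\tau$-regular module $N'$.

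The key step is the following claim: \emph{a faithful $\tau$-regular module has projective dimension at most one.} To prove it, take a minimal presentation $P_1 \xrightarrow{f} P_0 \to N' \to 0$ with $\rk(f) = r(P_1,P_0)$, using Theorem~\ref{thm:BS25a}. Suppose $\ker f \ne 0$, and pick $0 \ne x \in \ker f$ lying in some indecomposable summand $Ae_i$ of $P_1$. Since $N'$ is faithful, $A$ embeds in $(N')^m$, and hence $Ae_i$ admits a map $Ae_i \to N'$ that is nonzero on $x$. Lift this map to $g \colon Ae_i \to P_0$; extending by $0$ on the other summands gives $g \colon P_1 \to P_0$. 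Then for a generic scalar $\lambda$, the map $f + \lambda g$ has rank strictly larger than $\rk(f)$. Heuristically this holds because $g(x) \notin \Ima f$ while $(f+\lambda g)(x) = \lambda g(x)$, but a careful rank argument is needed, comparing $\ker(f+\lambda g)$ with $\ker f$ via semicontinuity. Such an increase contradicts maximality of $\rk(f)$, so $\ker f = 0$.

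This rank argument is the main obstacle. Should the direct argument fail, I would instead use the equivalence with $c_A = E_A$ and the Auslander--Reiten formula $E_A(N') = \dim \Hom_A(N',\tau N') = \dim \Ext^1(N',N') + (\text{a term involving } \Hom(D A, \ldots))$. Then I would show that the extra term vanishes when $\pdim \le 1$ and is forced to vanish by faithfulness, using $\pdim N' \le 1 \iff \Hom(DA,\tau N') = 0$ together with the fact that faithful modules cogenerate $DA$ in a suitable sense.

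Finally, from $\pdim(N') \le 1$ we descend to $\cZ$: $N'$ is a direct sum of modules in $\cZ$, and projective dimension of a direct sum is the maximum over the summands, so $\cZ$ contains a module of projective dimension at most one. Hence $\pdim(\cZ) \le 1$.
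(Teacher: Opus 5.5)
Your proof is correct. Its skeleton matches the paper's: (i)$\Rightarrow$(ii) is exactly Lemma~\ref{lem:directsums2}, and for (ii)$\Rightarrow$(i) the paper also finds a faithful $\tau$-regular $N$ in some $\cZ^t$ (Corollary~\ref{cor:faithful2} plus openness of faithfulness) and then passes to the summands. The genuine difference is the key lemma ``faithful and $\tau$-regular implies $\pdim\le 1$''. The paper proves it by the Mousavand--Paquette argument, which is your fallback made precise. From $c_A(N)=E_A(N)$ and Proposition~\ref{prop:voigt} one gets $e_A(N)=E_A(N)$, so by the Auslander--Reiten formula $\cI(N,\tau_A(N))=0$. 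Faithfulness gives an epimorphism $N^r\to D(A_A)$; faithful modules \emph{generate} $D(A_A)$, they do not cogenerate it. This epimorphism embeds $\Hom_A(D(A_A),\tau_A(N))$ into $\cI(N^r,\tau_A(N))=0$, hence $\pdim(N)\le 1$.

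Your main route instead works inside $\Hom_A(P_1,P_0)$ via Theorem~\ref{thm:BS25a}, and the step you flag is easy to complete. Choose $c_1,\ldots,c_r$ such that $f(c_1),\ldots,f(c_r)$ is a basis of $\Ima(f)$. The vectors $(f+\lambda g)(c_1),\ldots,(f+\lambda g)(c_r),g(x)$ are linearly independent at $\lambda=0$, because $g(x)\notin\Ima(f)$. Hence they are independent for all but finitely many $\lambda$. For $\lambda\neq 0$ they all lie in $\Ima(f+\lambda g)$, since $(f+\lambda g)(x)=\lambda g(x)$. So $\rk(f+\lambda g)>r(P_1,P_0)$, a contradiction.

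One correction: you cannot in general pick $x\in\ker(f)$ inside a single indecomposable summand of $P_1$, but you do not need to. Since $A$ embeds into $(N')^m$ and $P_1$ is a summand of a free module, there is $h\colon P_1\to N'$ with $h(x)\neq 0$; lift $h$ to $g\colon P_1\to P_0$. Also, the $M_i$ need not be generic, since you afterwards use openness of faithfulness on $\cZ^s$ anyway. Your stated reason for genericity is also backwards: $aM=0$ is a closed condition, which is why $I_\cU=I_\cZ$ for dense $\cU$.

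As for what each route buys: the paper's argument needs only $e_A(N)=E_A(N)$ and uses only Voigt's lemma and the Auslander--Reiten formula, not the harder direction of Theorem~\ref{thm:BS25a}. Yours is pure linear algebra on presentation spaces. It shows directly that every maximal-rank presentation of a faithful module, minimal or not, is injective, in the spirit of \cite[Lemma~3.6]{BS25} and the paper's maximal-rank theme.
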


The following proposition is a refinement of
\cite[Proposition~5.2]{MP23}.
It follows essentially from
\cite[Proposition~4.2]{AR77}.

\begin{Prop}\label{prop:introreduction}
Let $\cZ \in \irr^\tau(A)$,
and let $M \in \cZ \cap \taureg(A)$.
Let
$I$ be an ideal of $A$ with $I \cZ = 0$,
and let
$B = A/I$.
Then $M \in \taureg(B)$.
\end{Prop}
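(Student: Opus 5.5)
We compare the three invariants $c$, $E$ over $A$ and over $B$, and show the chain of inequalities of Proposition~\ref{prop:voigt} over $B$ is squeezed into equalities. Since $I\cZ = 0$, every module in $\cZ$ is a $B$-module, so $\cZ \subseteq \md(B,\bd) \subseteq \md(A,\bd)$, where $\md(B,\bd)$ is a closed subvariety of $\md(A,\bd)$. Because $M$ is $\tau_A$-regular, $\cZ$ is the unique irreducible component of $\md(A,\bd)$ containing $M$. Every irreducible component of $\md(B,\bd)$ containing $M$ is an irreducible subset of $\md(A,\bd)$ containing $M$, so it lies in $\cZ$. On the other hand, $\cZ$ itself is irreducible and lies in $\md(B,\bd)$. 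Hence $\cZ$ is also the unique component of $\md(B,\bd)$ containing $M$, and $\md_M(B,\bd)=\cZ=\md_M(A,\bd)$. The orbit $\cO_M$ is the same for both algebras, since $\GL_\bd$ acts in the same way. Therefore $c_B(M)=c_A(M)$.

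Next we compare the $E$-invariants. The plan is to invoke \cite[Proposition~4.2]{AR77}, which gives that for $B=A/I$ the module $\tau_B(M)$ is a submodule of $\tau_A(M)$. More precisely, it identifies $\tau_B(M)$ with the largest submodule of $\tau_A(M)$ annihilated by $I$, namely $\{x\in\tau_A(M)\mid Ix=0\}$. Any $A$-homomorphism $M\to\tau_A(M)$ has image annihilated by $I$, because $IM=0$. So every such map factors through this submodule, which gives
$$
\Hom_B(M,\tau_B(M)) = \Hom_A(M,\tau_A(M)),
$$
and hence $E_B(M)=E_A(M)$. If we only want an inequality, it suffices to know that $\tau_B(M)\subseteq\tau_A(M)$, which yields $E_B(M)\le E_A(M)$.

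Combining these with Proposition~\ref{prop:voigt} applied to $B$, we get
$$
c_A(M)=c_B(M)\le E_B(M)\le E_A(M)=c_A(M).
$$
All terms are therefore equal, so $c_B(M)=E_B(M)$, which means $M\in\taureg(B)$.

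The main obstacle is the precise citation and form of the Auslander–Reiten statement relating $\tau_{A/I}$ to $\tau_A$. If that result is not available in the form needed, a fallback is to derive the inequality from the formula $\tau M = D\Coker(\Hom(P_0,A)\to\Hom(P_1,A))$. Here one uses that a minimal $B$-presentation of $M$ is obtained by applying $B\otimes_A -$ to a minimal $A$-presentation of $M$. One then compares $\Hom_A(M,\tau_A M)\cong D\overline{\Hom}_A(\tau_A^{-}\ldots)$, or equivalently compares via the Auslander–Reiten formula $E_A(M)=\dim D\overline{\Hom}_A(M,\tau M)$. Only the inequality $E_B(M)\le E_A(M)$ is actually needed for the argument.
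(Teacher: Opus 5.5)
Your proof is correct and follows the paper's argument. $\cZ$ is the unique component of both $\md(A,\bd)$ and $\md(B,\bd)$ through $M$, so $c_B(M)=c_A(M)$; \cite[Proposition~4.2]{AR77} gives $E_B(M)\le E_A(M)$; and Proposition~\ref{prop:voigt} over $B$ forces equality throughout.

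One correction to your aside: in general $\tau_B(M)$ is only a direct summand of $\{x\in\tau_A(M)\mid Ix=0\}$, the complement being an injective $B$-module. So $E_B(M)=E_A(M)$ fails without the hypotheses: for $A=K[x]/(x^2)$, $B=K$ and $M$ the simple module, $E_A(M)=1$ but $E_B(M)=0$. Your final chain uses only the inequality, so the argument is unaffected.
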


Let $\calZ \in \irr(A)$ such that
$\cZ^t$ is $\tau$-regular for all $t \ge 1$.
For each $t \ge 1$ there is a unique $\cZ_t \in \irr^\tau(A)$ such that
$$
\cZ^t \subseteq \cZ_t.
$$
This follows from the fact that all $\tau$-regular modules are regular.

\begin{Thm}\label{thm:intromain1.9}
Let $\calZ \in \irr^\tau(A)$ such that
$\cZ^t$ is $\tau_A$-regular for all $t \ge 1$.
Then the following hold:
\begin{itemize}\itemsep2mm

\item[(i)]
For all $t \ge 1$ we have
$$
I_{\cZ_{t+1}} \subseteq I_{\cZ_t}.
$$

\item[(ii)]
There is some $s(\cZ) \ge 1$ such that
$$
I_{\cZ_\infty} := \bigcap_{t \ge 1} I_{\cZ_t} = I_{\cZ_{s(\cZ)}}.
$$

\item[(iii)]
We have
$I_{\cZ_\infty} \cZ_t = 0$ for all
$t \ge 1$, and $\cZ_t$
is faithful with respect to $B = A/I_{\cZ_\infty}$ for
all $t \ge s(\cZ)$.

\item[(iv)]
$\pdim_B(\cZ_t) \le 1$ for all $t \ge 1$.

\end{itemize}
\end{Thm}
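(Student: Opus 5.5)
The plan is to prove the four parts in order, with one geometric lemma doing most of the work. The lemma says that direct sums of generic modules of the $\cZ_t$ land in the expected components. More precisely, for $t,u \ge 1$ and generic $X \in \cZ_t$ and $Y \in \cZ_u$ we want $X \oplus Y \in \cZ_{t+u}$. Generic $X$ satisfies $X^u \in \cZ_{tu}$ by the same argument, and we also want $X^u$ to be $\tau_A$-regular.

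To prove the lemma, consider the irreducible constructible set $\cZ_t \oplus \cZ_u$ (the image of $\cZ_t \times \cZ_u \times \GL_{(t+u)\bd}$). Its closure contains $\cZ^t \oplus \cZ^u = \cZ^{t+u}$, since $\cZ^t \subseteq \cZ_t$ and $\cZ^u \subseteq \cZ_u$. By hypothesis $\cZ^{t+u}$ contains a $\tau_A$-regular module $N$.
\begin{itemize}
\item Since $\md^\tau(A,(t+u)\bd)$ is open and meets the closure of $\cZ_t \oplus \cZ_u$, it meets $\cZ_t\oplus\cZ_u$ in a dense open subset. So generic $X \oplus Y$ is $\tau_A$-regular.
\item The closure of $\cZ_t \oplus \cZ_u$ is irreducible and contains $N$. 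A $\tau$-regular module lies in a unique irreducible component, and $N \in \cZ^{t+u} \subseteq \cZ_{t+u}$. Hence $\overline{\cZ_t \oplus \cZ_u} \subseteq \cZ_{t+u}$.
\end{itemize}
The same reasoning applied to $\cZ_t^u$, whose closure contains $\cZ^{tu}$, gives $\overline{\cZ_t^u} \subseteq \cZ_{tu}$ with generic $X^u$ being $\tau_A$-regular.

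\textbf{Proof of (i).} Take generic $X \in \cZ_t$ and $M \in \cZ$; here $\cZ = \cZ_1$, which follows from uniqueness since $\cZ$ is $\tau$-regular. The lemma gives $X \oplus M \in \cZ_{t+1}$, so $I_{\cZ_{t+1}} \subseteq I_{X\oplus M} \subseteq I_X$. Intersecting over a dense subset $\cV$ of $\cZ_t$ and using $I_{\ov{\cV}} = I_{\cV}$ yields $I_{\cZ_{t+1}} \subseteq I_{\cZ_t}$.

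\textbf{Proof of (ii) and (iii).} Part (ii) holds because a descending chain of subspaces of the finite-dimensional space $A$ stabilizes. For (iii), we have $I_{\cZ_\infty} \subseteq I_{\cZ_t}$, so $I_{\cZ_\infty}\cZ_t = 0$. For $t \ge s(\cZ)$ the chain has stabilized, so $I_{\cZ_t} = I_{\cZ_\infty}$, and hence $\cZ_t$ is faithful over $B$. Note also that $\cZ_t$ is an irreducible component of $\md(B,t\bd)$: it is closed, irreducible and contained in $\md(B,t\bd) \subseteq \md(A,t\bd)$, and it is maximal there because it is maximal in the larger variety.

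\textbf{Proof of (iv).} This is the main step. First treat $t \ge s := s(\cZ)$.
\begin{itemize}
\item By Proposition~\ref{prop:introreduction}, $\cZ_t$ is $\tau_B$-regular, i.e. $\cZ_t \in \irr^\tau(B)$, and it is faithful over $B$.
\item To apply Theorem~\ref{thm:intromain1.8} over $B$, we need $\cZ_t^u$ to be $\tau_B$-regular for all $u$. By the lemma, generic $X^u$ is $\tau_A$-regular and lies in $\cZ_{tu}$, and $I_{\cZ_{tu}} = I_{\cZ_\infty}$ kills $\cZ_{tu}$. So Proposition~\ref{prop:introreduction}, applied to the component $\cZ_{tu}$ and the ideal $I_{\cZ_\infty}$, shows that $X^u$ is $\tau_B$-regular.
\item Theorem~\ref{thm:intromain1.8} then gives $\pdim_B(\cZ_t) \le 1$.
\end{itemize}
Now let $t < s$ and take generic $X \in \cZ_t$. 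Then $X^s$ is $\tau_A$-regular and lies in $\cZ_{ts}$. As before, $X^s$ is $\tau_B$-regular by Proposition~\ref{prop:introreduction}. Since $ts \ge s$, the component $\cZ_{ts}$ is faithful over $B$, hence sincere, and $\pdim_B(\cZ_{ts}) \le 1$ by the previous case. Theorem~\ref{thm:intromain1.7} over $B$ then yields $\pdim_B(X^s) \le 1$, so $\pdim_B(X) \le 1$.

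The delicate point is exactly the lemma. One must be sure that generic $X^u$ with $X \in \cZ_t$, and not merely some point in the closure of $\cZ_t^u$, is $\tau$-regular and lies in $\cZ_{tu}$. The openness of $\tau$-regularity, together with the uniqueness of the component through a $\tau$-regular module, is what should make this work.
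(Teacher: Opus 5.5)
Your architecture is essentially the paper's. Part (i) comes from $\overline{\cZ_t\oplus\cZ_1}\subseteq\cZ_{t+1}$; the paper uses $\cZ_t\oplus\cO_M$ with $M^{t+1}$ $\tau$-regular, and your version is slightly cleaner. Parts (ii) and (iii) come from stabilization of the chain. Part (iv) reduces to $B$ via Proposition~\ref{prop:introreduction}, applies Theorem~\ref{thm:intromain1.8} to the faithful $\cZ_t$ with $t\ge s(\cZ)$, and uses Theorem~\ref{thm:intromain1.7} for the remaining $t$. The paper uses Theorem~\ref{thm:intromain1.7} to get $\pdim_B(\cZ)\le 1$ and then propagates via $\cZ^t\subseteq\cZ_t$; you go up to $\cZ_{ts}$ instead. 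Both work.

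There is one genuine gap: your claim that for generic $X\in\cZ_t$ the \emph{diagonal} power $X^u$ is $\tau_A$-regular ``by the same reasoning''. Openness of $\tau$-regularity and uniqueness of the component through a $\tau$-regular point only show that a generic point of $\cZ_t^u$ is $\tau_A$-regular. Such a point is $X_1\oplus\cdots\oplus X_u$ with $(X_1,\ldots,X_u)$ generic in $\cZ_t\times\cdots\times\cZ_t$. The modules of the form $X^u$ form, in general, a proper constructible subset of $\cZ_t^u$ of smaller dimension, and a dense open subset of $\cZ_t^u$ need not meet it. Passing from generic direct sums to diagonal powers is exactly what Lemma~\ref{lem:directsums1} does, and it needs the rank characterization of Theorem~\ref{thm:BS25a}, not just openness. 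So your closing remark locates the delicate point correctly but misidentifies what resolves it.

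The gap is easy to close, in either of two ways.
\begin{itemize}
\item Apply Theorem~\ref{thm:intromain1.4}, (i)$\Rightarrow$(ii), to the component $\cZ_t$. Since $\cZ_t^u\supseteq\cZ^{tu}$, every $\cZ_t^u$ is $\tau$-regular, hence $X^u\in\taureg(A)$ for all $X\in\cZ_t\cap\taureg(A)$.
\item Avoid diagonals altogether. Theorem~\ref{thm:intromain1.8} only needs $\cZ_t^u$ to contain \emph{some} $\tau_B$-regular module. A generic $X_1\oplus\cdots\oplus X_u\in\cZ_t^u$ is $\tau_A$-regular and lies in $\cZ_{tu}$, hence is $\tau_B$-regular by Proposition~\ref{prop:introreduction}. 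Similarly, for $t<s=s(\cZ)$, apply Theorem~\ref{thm:intromain1.7} to a generic $X_1\oplus\cdots\oplus X_s\in\cZ_t^s\subseteq\cZ_{ts}$, which is $\tau_B$-regular, to get $\pdim_B(X_1)\le 1$.
\end{itemize}
With either repair the proof is complete.
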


\begin{Cor}\label{cor:intromain1.10}
Let $\cZ \in \irr^\tau(A)$.
Then the following are equivalent:
\begin{itemize}\itemsep2mm

\item[(i)]
$\cZ^t$ is $\tau_A$-regular for all $t \ge 1$;

\item[(ii)]
There exists some ideal $I$ of $A$ with $I \cZ = 0$ such that
$$
\pdim_B(\cZ) \le 1,
$$
where $B = A/I$.

\end{itemize}
In this case,
the ideal $I$ can be chosen such that
$\pdim_B(M) \le 1$ for each $M \in \cZ \cap \taureg(A)$.
\end{Cor}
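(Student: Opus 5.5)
The plan is to derive both implications from Theorem~\ref{thm:intromain1.9}, Theorem~\ref{thm:intromain1.7}, Proposition~\ref{prop:introreduction} and Proposition~\ref{prop:voigt}. Throughout, for an ideal $I$ with $I\cZ=0$ and $B=A/I$, I use that $\md(B,\bd)$ is a closed subset of $\md(A,\bd)$ containing $\cZ$. Hence $\cZ$ is also an irreducible component of $\md(B,\bd)$, and every component of $\md(B,\bd)$ lies in some component of $\md(A,\bd)$.

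\textbf{(i) $\Rightarrow$ (ii) and the addendum.} I take $I := I_{\cZ_\infty}$ from Theorem~\ref{thm:intromain1.9}, and set $B=A/I$. Since $\cZ^1=\cZ$ is already a component, we have $\cZ_1=\cZ$. Then Theorem~\ref{thm:intromain1.9}(iii),(iv) gives $I\cZ=0$ and $\pdim_B(\cZ)\le 1$.

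For the refinement, let $M\in\cZ\cap\taureg(A)$ and put $s=s(\cZ)$. By Theorem~\ref{thm:intromain1.4}, $M^s$ is $\tau_A$-regular, and $M^s\in\cZ^s\subseteq\cZ_s$. Since $I\cZ_s=0$, Proposition~\ref{prop:introreduction} applied to $\cZ_s$ shows that $M^s\in\taureg(B)$. Consequently $\cZ_s$ is a $\tau_B$-regular component of $\md(B,s\bd)$. By Theorem~\ref{thm:intromain1.9}(iii) it is faithful over $B$, hence sincere, and by (iv) it satisfies $\pdim_B(\cZ_s)\le 1$. Theorem~\ref{thm:intromain1.7}, applied over $B$, then gives $\cZ_s\cap\taureg(B)=\cZ_s\cap\cP_{\le 1}(B)$. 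Thus $\pdim_B(M^s)\le 1$, and therefore $\pdim_B(M)\le 1$.

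\textbf{(ii) $\Rightarrow$ (i).} Choose $M\in\cZ$ generic, so that $M$ is $\tau_A$-regular and $\pdim_B(M)\le 1$. By Proposition~\ref{prop:introreduction}, $M$ is $\tau_B$-regular, that is, $c_B(M)=E_B(M)$. The module $M$ lies in the unique $A$-component $\cZ$, and $\cZ$ is also the only $B$-component containing $M$. Hence $c_A(M)=c_B(M)$, and so $E_A(M)=E_B(M)$. Since $E$ is biadditive, this gives
$$
E_A(M^t)=t^2E_A(M)=t^2E_B(M)=E_B(M^t).
$$
Now $\pdim_B(M^t)\le 1$, so $M^t\in\taureg(B)$, which means $E_B(M^t)=c_B(M^t)$. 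Every $B$-component through $M^t$ lies in an $A$-component through $M^t$, so $c_A(M^t)\ge c_B(M^t)$. Putting these together with Proposition~\ref{prop:voigt}, I get
$$
c_A(M^t)\ge c_B(M^t)=E_B(M^t)=E_A(M^t)\ge c_A(M^t).
$$
Therefore $M^t$ is $\tau_A$-regular. Since $M^t\in\cZ^t$, the set $\cZ^t$ is $\tau$-regular for every $t\ge1$.

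\textbf{Main obstacle.} The delicate point is the comparison of the invariants $c$ and $E$ between $A$ and $B$ in the second implication. It relies on $\cZ$ being the unique component through a $\tau_A$-regular module in both module varieties. It also uses the inclusion of $B$-components into $A$-components, which only yields an inequality on $c$, and that inequality happens to run in the direction needed.
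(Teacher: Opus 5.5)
Your proof is correct and follows the paper. The implication (ii)$\Rightarrow$(i) is the chain $c_A(M^t)\ge c_B(M^t)=E_B(M^t)=t^2E_B(M)=t^2c_B(M)=t^2c_A(M)=t^2E_A(M)=E_A(M^t)$ from the proof of Theorem~\ref{thm:limit2}(i), and (i)$\Rightarrow$(ii) is Theorem~\ref{thm:limit2}(ii) with $I=I_{\cZ_\infty}$; your treatment of the addendum, applying Theorem~\ref{thm:sincere} over $B$ to the faithful (hence sincere) component $\cZ_{s(\cZ)}$ together with Proposition~\ref{prop:reduction6}, is the same mechanism the paper uses inside the proof of Theorem~\ref{thm:limit2}(ii).
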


\subsection{Conventions}
For maps $f\df U \to V$ and $g\df V \to W$ we denote their
composition by $gf\df U \to W$.

The identity map of a set $U$ is denoted by $1_U$.

We assume that the set $\N$ of natural numbers includes $0$.

\subsection{Organization of the article}
In Section~\ref{sec:preliminary} we recall
a few definitions and results from the representation theory
of finite-dimensional algebras.

Section~\ref{sec:directsums}
contains the proofs of
Theorem~\ref{thm:intromain1.4} (= Lemmas~\ref{lem:directsums1}
and \ref{lem:directsums3})
and
Theorem~\ref{thm:intromain1.5} (= Theorem~\ref{thm:directsums4}).

Section~\ref{sec:examples} discusses some classes of
examples.
The content of Sections~\ref{subsec:ex1}, \ref{subsec:ex2},
\ref{subsec:ex3} yields
Theorem~\ref{thm:intromain1.6}.
Section~\ref{subsec:ex4} contains an example of a $\tau$-regular module $M$ such that $M \oplus M$ is not $\tau$-regular.

Section~\ref{sec:faithful} contains the proofs of
Theorem~\ref{thm:intromain1.7} (= Theorem~\ref{thm:sincere}),
Theorem~\ref{thm:intromain1.8} (= Theorem~\ref{thm:faithful4}).

Section~\ref{sec:reduction} deals with
the reduction of $\tau$-rigid and $\tau$-regular modules to
some suitable factor algebras.
It contains the proofs of
Proposition~\ref{prop:introreduction} (= Proposition~\ref{prop:reduction6}),
Theorem~\ref{thm:intromain1.9} (which follows
follows from Lemmas~\ref{lem:limit1} and \ref{lem:limit3} and Theorem~\ref{thm:limit2}) and
Corollary~\ref{cor:intromain1.10} (= Corollary~\ref{cor:limit3}).


\section{Preliminaries}\label{sec:preliminary}


\subsection{Finite-dimensional algebras}
Let $K$ be an algebraically closed field.
By $A$ we always denote a finite-dimensional associative $K$-algebra.
By a \emph{module} we always mean a finite-dimensional left $A$-module.
Let $\md(A)$ be the category of $A$-modules,
and let $\ind(A)$ be the subcategory of indecomposable
$A$-modules.
As usual, $\tau = \tau_A$ denotes the Auslander-Reiten translation
for $\md(A)$.

Let $\proj(A)$ (resp. $\inj(A)$) be the subcategory of projective (resp. injective) $A$-modules.

For $M,N \in \md(A)$ let $\cP(M,N)$ (resp. $\cI(M,N)$)
be the subspace of all $f \in \Hom_A(M,N)$ such that $f$ factors
through some projective (resp. injective) $A$-module.
Define
\begin{align*}
\underline{\Hom}_A(M,N) &:= \Hom_A(M,N)/\cP(M,N),
\\
\ov{\Hom}_A(M,N) &:= \Hom_A(M,N)/\cI(M,N).
\end{align*}
The \emph{stable module category} $\underline{\md}(A)$
(resp. $\ov{\md}(A)$) has the same objects as $\md(A)$
and for $M,N \in \md(A)$ it has the homomorphism
space $\underline{\Hom}_A(M,N)$ (resp. $\ov{\Hom}_A(M,N)$).
The Auslander-Reiten translation $\tau_A$ gives
rise to equivalences
$$
\SelectTips{cm}{}
\xymatrix{
\underline{\md}(A) \ar@/^1ex/[r]^\tau & \ar@/^1ex/[l]^{\tau^-}
\ov{\md}(A)
}
$$
which are mutually quasi-inverses.
Let $D := \Hom_K(-,K)$ be the usual duality.
For $M,N \in \md(A)$, we have the Auslander-Reiten formula
$$
\Ext_A^1(M,N) \cong D\ov{\Hom}_A(N,\tau_A(M))
\cong D\underline{\Hom}_A(\tau_A^{-}(N),M),
$$
see \cite[Section~IV.4]{ARS97} or \cite[Section~III.6]{SY11}
for more details.

For $M,N \in \md(A)$ let
$$
E_A(M,N) := \dim \Hom_A(N,\tau_A(M)),
\text{\quad and \quad}
E_A(M) := E_A(M,M).
$$

For $\cZ,\cZ' \in \irr(A)$ let
\begin{align*}
c_A(\cZ) &:= \min\{ \dim(\cZ) - \dim \cO_M \mid M \in \cZ \},
\\
e_A(\cZ) &:= \min\{ \dim \Ext_A^1(M,M) \mid M \in \cZ \},
\\
e_A(\cZ,\cZ') &:= \min\{ \dim \Ext_A^1(M,M') \mid (M,M') \in \cZ \times \cZ' \},
\\
E_A(\cZ,\cZ') &:= \min\{ E_A(M,M') \mid (M,M') \in \cZ \times \cZ' \}.
\end{align*}

Let $S(1),\ldots,S(n)$ be the simple $A$-modules, up to isomorphism.
Furthermore, let $P(1),\ldots,P(n)$ (resp. $I(1),\ldots,I(n)$)
be the indecomposable projective (resp. indecomposable injective)
$A$-modules, up to isomorphism.
These are numbered such that
$$
\tp(P(i)) \cong S(i) \cong \soc(I(i))
$$
for $1 \le i \le n$.
We have
$$
\dim \Hom_A(P(i),M) = \dim \Hom_A(M,I(i)) = [M:S(i)].
$$

As defined already above,
let
$$
r(M,N) := \max\{ \rk(f) \mid f \in \Hom_A(M,N) \}.
$$

\subsection{Direct sums of irreducible components}
For $1 \le i \le m$ let
$\cZ_i \in \irr(A,\bd_i)$.
Set $\bd := \bd_1 + \cdots + \bd_m$.
We get a morphism
\begin{align*}
\eta\df \GL_\bd \times \cZ_1 \times \cdots \times \cZ_m &\to
\md(A,\bd)
\\
(g,M_1,\ldots,M_m) &\mapsto g.(M_1 \oplus \cdots \oplus M_m).
\end{align*}
The image of $\eta$ is denoted by $\cZ_1 \oplus \cdots \oplus \cZ_m$.
This is an irreducible constructible subset of $\md(A,\bd)$, which is called the \emph{direct sum} of the components
$\cZ_1,\ldots,\cZ_m$.
The direct sum of $t$ copies of $\cZ$ is denoted by $\cZ^t$.
Note that the closure
$$
\ov{\cZ_1 \oplus \cdots \oplus \cZ_m}
$$
is in general not an irreducible component of $\md(A,\bd)$.

\begin{Thm}[{\cite[Theorem~1.2]{CBS02}}]\label{thm:CBS02}
For $1 \le i \le m$ let
$\cZ_i \in \irr(A,\bd_i)$.
Then the following are equivalent
\begin{itemize}\itemsep2mm

\item[(i)]
$\ov{\cZ_1 \oplus \cdots \oplus \cZ_m} \in \irr(A)$;

\item[(ii)]
$e_A(\cZ_i,\cZ_j) = 0$ for all $1 \le i, j \le m$ with $i \not= j$.

\end{itemize}
\end{Thm}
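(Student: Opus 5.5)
This is \cite[Theorem~1.2]{CBS02}, and I would prove it as in that paper. First I reduce to $m = 2$. Ext is additive in each argument, and $\dim \Ext_A^1(-,-)$ is upper semicontinuous on pairs of modules. Hence, for $\cW := \ov{\cZ_1 \oplus \cdots \oplus \cZ_{m-1}}$, we get
$$
e_A(\cW,\cZ_m) = \sum_{i<m} e_A(\cZ_i,\cZ_m),
$$
and similarly with the arguments swapped. Then both directions follow by induction on $m$, once the case $m=2$ is known for arbitrary irreducible components (the inductive hypothesis makes $\cW$ a component whenever all the pairwise conditions hold). So from now on let $\cZ_1 \in \irr(A,\bd_1)$ and $\cZ_2 \in \irr(A,\bd_2)$, and take $M_1,M_2$ generic in them.

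The basic tool is a dimension count. Generically we have
$$
\dim(\cZ_1\oplus\cZ_2) = \dim \GL_\bd - \dim \GL_{\bd_1} - \dim \GL_{\bd_2} + \dim\cZ_1 + \dim\cZ_2 - \dim\Hom_A(M_1,M_2) - \dim\Hom_A(M_2,M_1).
$$
This comes from computing the generic fibre of $\eta$ via $\dim \Aut_A(M_1\oplus M_2)$ against $\dim \Aut_A(M_1)\times\Aut_A(M_2)$. Next I introduce the irreducible constructible set $\cE(\cZ_1,\cZ_2)$ of modules $N$ having a submodule in $\cZ_2$ with factor in $\cZ_1$. It is the image of a vector bundle of cocycles $Z^1(M_1,M_2)$ over $\cZ_1\times\cZ_2$, twisted by $\GL_\bd$ modulo a parabolic subgroup. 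Using $\dim Z^1(M_1,M_2) = \dim \Ext^1_A(M_1,M_2) + \sum_i d_{1,i}d_{2,i} - \dim\Hom_A(M_1,M_2)$, the same count gives
$$
\dim \cE(\cZ_1,\cZ_2) \;\ge\; \dim(\cZ_1\oplus\cZ_2) + e_A(\cZ_1,\cZ_2) - \dim\Hom_A(M_2,M_1) + (\text{stabiliser correction}).
$$

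For (i)$\Rightarrow$(ii), suppose $e_A(\cZ_1,\cZ_2) > 0$. Every module in $\cZ_1\oplus\cZ_2$ lies in the closure of $\cE(\cZ_1,\cZ_2)$, since an extension degenerates to its split form. So it suffices to show that the generic middle term of a nonsplit extension is not in $\ov{\cZ_1\oplus\cZ_2}$, or that $\dim\cE > \dim(\cZ_1\oplus\cZ_2)$. The refined count compares fibres: the generic nonsplit extension $N$ has $\dim\Aut(N)$ strictly smaller than $\dim\Aut_A(M_1\oplus M_2)$ once we account for the $\Hom_A(M_2,M_1)$ part. Then $\ov{\cZ_1\oplus\cZ_2}$ is properly contained in an irreducible closed set, so it is not a component. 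By symmetry the same argument handles $e_A(\cZ_2,\cZ_1)>0$.

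For (ii)$\Rightarrow$(i), let $\cZ$ be a component containing $\ov{\cZ_1\oplus\cZ_2}$, and let $N$ be generic in $\cZ$ near $M_1\oplus M_2$. I would use the variety of pairs $(N,U)$, where $U$ is a $\bd_2$-dimensional submodule of $N$. Its projection to $\md(A,\bd)$ is proper, since the fibres are closed in a product of Grassmannians. Because $\Ext^1_A(M_2,M_1)=0$, the submodule $M_2 \subseteq M_1\oplus M_2$ should deform to all nearby $N$. Concretely, I would check that the tangent map to the projection is surjective at $(M_1\oplus M_2, M_2)$, where the obstruction lives in $\Ext^1_A(M_2,M_1)$. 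Then generic $N$ is an extension of some $N_1$ by some $N_2$ with $N_i$ in $\cZ_i$ (here I use that $\cZ_i$ is a component, so nearby modules of dimension vector $\bd_i$ specialize correctly). Finally $\Ext^1_A(N_1,N_2)=0$ by semicontinuity, so $N \cong N_1\oplus N_2 \in \cZ_1\oplus\cZ_2$. Hence $\cZ = \ov{\cZ_1\oplus\cZ_2}$.

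The main obstacle is this last deformation step: showing rigorously that the submodule persists on an open neighbourhood and that its quotient and sub land in $\cZ_1$ and $\cZ_2$ rather than merely in $\md(A,\bd_i)$. I would first try the local tangent-space/smoothness argument described above. If it is not clean, I would fall back on a dimension comparison between $\dim\cZ$ and $\dim\cE(\cZ_1,\cZ_2)$ under the hypothesis $e_A=0$.
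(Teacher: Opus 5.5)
The paper gives no proof of this statement; it only cites \cite[Theorem~1.2]{CBS02}. Your architecture is reasonable: reduce to two summands, count dimensions of the extension locus for (i)$\Rightarrow$(ii), and deform a submodule for (ii)$\Rightarrow$(i). But in both directions the step that carries the content is missing, and the mechanism you name would not supply it.

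\textbf{(i)$\Rightarrow$(ii).} Your displayed bound, with its $-\dim\Hom_A(M_2,M_1)$ and an unspecified ``stabiliser correction'', does not give $\dim\mathcal{E}>\dim(\cZ_1\oplus\cZ_2)$. The inequality $\dim\Aut_A(N)<\dim\Aut_A(M_1\oplus M_2)$ only shows $N\not\cong M_1\oplus M_2$. It does not exclude $N\cong M_1'\oplus M_2'$ for some other pair in $\cZ_1\times\cZ_2$.

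What is missing is a fibre bound. Work over the open set $\cV\subseteq\cZ_1\times\cZ_2$ where $\dim\Hom_A(M_1,M_2)$, $\dim\Ext^1_A(M_1,M_2)$ and $\dim\Hom_A(M_2,M_1)$ take their generic values $h_{12},e_{12},h_{21}$. There the cocycle spaces have constant dimension, so the total space is irreducible; over all of $\cZ_1\times\cZ_2$ it need not be. Let $\mathcal{T}$ be the corresponding set of block triangular modules on $K^\bd$. Let $P$ be the parabolic stabiliser of $K^{\bd_2}\subseteq K^\bd$, of dimension $\dim\GL_{\bd_1}+\dim\GL_{\bd_2}+\sum_i d_{1,i}d_{2,i}$. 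Your cocycle formula then gives $\dim(\GL_\bd\times^P\mathcal{T})=\dim(\cZ_1\oplus\cZ_2)+e_{12}+h_{21}$.

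The fibre over $N$ is a locally closed piece of the variety of submodules $U\subseteq N$ with $\dimv U=\bd_2$. Its tangent space at $U$ is $\Hom_A(U,N/U)$, of dimension $h_{21}$ there. Hence fibres have dimension at most $h_{21}$, and $\dim\mathcal{E}\ge\dim(\cZ_1\oplus\cZ_2)+e_{12}$: the $h_{21}$ cancels rather than being subtracted.

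This count never uses that $\cZ_1,\cZ_2$ are components, and that matters. In this direction your induction cannot assume $\ov{\cZ_1\oplus\cdots\oplus\cZ_{m-1}}$ is a component. Instead, apply the count to $\cZ_i$ and the closure of the sum of the other summands, then use your additivity of $e_A$.

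\textbf{(ii)$\Rightarrow$(i).} You identify the right obstruction space, but surjectivity of the Zariski tangent map of $\pi\colon(N,U)\mapsto N$ at $(M_1\oplus M_2,M_2)$ does not make $\pi$ surjective onto a neighbourhood. Both the variety of pairs and $\md(A,\bd)$ may be singular there. For example, the inclusion of $\{xy=0\}$ into $\mathbb{A}^2$ is bijective on tangent spaces at the origin, yet its image is not a neighbourhood.

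What you need is smoothness of $\pi$ at that point. Let $X_{R'}$ deform $X$ over a small extension $R'\to R$ with kernel $I$, and let $U_R\subseteq X_{R'}\otimes_{R'}R$ be a submodule lifting $U$. The failure of $A$-stability of any $R'$-lift of $U_R$ is a derivation $A\to\Hom_K(U,X/U)\otimes I$, well defined up to inner derivations. It therefore gives a class in $\Ext^1_A(U,X/U)\otimes I$, which vanishes exactly when $U_R$ lifts to a submodule. Since $\Ext^1_A(M_2,M_1)=0$, the infinitesimal lifting criterion makes $\pi$ smooth, hence open, at $(M_1\oplus M_2,M_2)$. So the image of $\pi$ contains a dense open subset of $\cZ$.

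From there your argument goes through, provided you take $M_i$ outside all other components of $\md(A,\bd_i)$, so that $\cZ_i$ contains a neighbourhood of $M_i$. Your fallback does not work. The only general upper bound is $\dim\cZ\le\dim\cO_M+\dim\Ext^1_A(M,M)=\dim\cO_M+e_A(\cZ_1)+e_A(\cZ_2)$ for $M=M_1\oplus M_2$. This exceeds $\dim(\cZ_1\oplus\cZ_2)=\dim\cO_M+c_A(\cZ_1)+c_A(\cZ_2)$ unless both $\cZ_i$ are generically regular.
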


A component $\cZ$ is \emph{indecomposable} if the indecomposable modules
form a dense subset of $\cZ$.
Each component $\cZ$ is (the closure of) a direct sum of indecomposable components in a
unique way, up to permutation of the summands.
This is then called the \emph{canonical decomposition} of $\cZ$.
We refer to \cite{CBS02} for details.

Here is an analogous result for generically $\tau$-regular component.
Its proof is based on Theorem~\ref{thm:CBS02}.

\begin{Thm}[{\cite[Theorem~1.3]{CLFS15}}]\label{thm:CLFS15}
For $1 \le i \le m$ let
$\cZ_i \in \irr^\tau(A,\bd_i)$.
Then the following are equivalent
\begin{itemize}\itemsep2mm

\item[(i)]
$\ov{\cZ_1 \oplus \cdots \oplus \cZ_m} \in \irr^\tau(A)$;

\item[(ii)]
$E_A(\cZ_i,\cZ_j) = 0$ for all $1 \le i, j \le m$ with $i \not= j$.

\end{itemize}
\end{Thm}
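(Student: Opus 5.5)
The plan is to compare the three invariants $c_A$, $e_A$ and $E_A$ of Proposition~\ref{prop:voigt} on a generic direct sum $M = M_1 \oplus \cdots \oplus M_m$, where each $M_i$ is generic in $\cZ_i$. Genericity here means three things. Each $M_i$ is $\tau$-regular, so $c_A(M_i) = E_A(M_i)$. Each $M_i$ satisfies $\dim\cZ_i - \dim\cO_{M_i} = c_A(\cZ_i)$. And for each $i \ne j$ we have $E_A(M_i,M_j) = E_A(\cZ_i,\cZ_j)$; this holds on a dense open set by upper semicontinuity of $\dim\Hom_A(N,\tau_A(M))$ on $\cZ_i \times \cZ_j$, since $\dim\Hom_A(N,\tau_A(M)) = \dim\ov{\Hom}_A(N,\tau_A(M))+\dim\cI(N,\tau_A(M))$ can be rewritten via the Auslander--Reiten formula as a combination of semicontinuous functions. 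I would take this semicontinuity as known from \cite{GLS12,BS25}.

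First I would carry out the standard dimension count from \cite{CBS02}. The image of $\eta$ has dimension $\dim\GL_\bd - \sum_i \dim\GL_{\bd_i} + \sum_i \dim\cZ_i - (\text{generic fibre correction})$. Combined with $\dim\cO_M = \dim\GL_\bd - \dim\operatorname{End}_A(M)$ and $\dim\operatorname{End}_A(M) = \sum_{i,j}\dim\Hom_A(M_i,M_j)$, this yields
$$
\dim(\cZ_1\oplus\cdots\oplus\cZ_m) - \dim\cO_M = \sum_i c_A(\cZ_i) = \sum_i E_A(M_i).
$$
Here the last equality uses that the $M_i$ are generic and $\tau$-regular. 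Additivity of $\tau_A$ on direct sums gives
$$
E_A(M) = \sum_i E_A(M_i) + \sum_{i\ne j} E_A(M_i,M_j).
$$

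For (ii)$\Rightarrow$(i), assume the cross terms vanish. Then $E_A(M) = \dim(\cZ_1\oplus\cdots\oplus\cZ_m) - \dim\cO_M$. Since $\md_M(A,\bd)$ contains the closure of the direct sum, this quantity is at most $c_A(M)$. Proposition~\ref{prop:voigt} gives $c_A(M)\le E_A(M)$, so all of these are equal. Hence $M$ is $\tau$-regular, and $M$ lies in a unique component $\cZ$ with $\dim\cZ - \dim\cO_M = c_A(M)$. The closure of the direct sum is an irreducible closed subset of $\cZ$ of the same dimension, so it equals $\cZ \in \irr^\tau(A)$.

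For (i)$\Rightarrow$(ii), let $\cZ$ be the closure of the direct sum and assume $\cZ \in \irr^\tau(A)$. Then a generic $M$ as above is $\tau$-regular, because the $\tau$-regular modules of $\cZ$ form a dense open set that meets the dense image of generic tuples. Since $\cZ$ is the only component containing $M$, we get $c_A(M) = \dim\cZ - \dim\cO_M = \sum_i E_A(M_i)$. On the other hand $c_A(M) = E_A(M)$, and subtracting forces $\sum_{i\ne j}E_A(\cZ_i,\cZ_j) = 0$. The main obstacle I expect is justifying cleanly that all the genericity conditions can be imposed simultaneously: dense open subsets of $\cZ_1\times\cdots\times\cZ_m$ must map to dense subsets of the image of $\eta$, and the fibre-dimension count must be valid at the chosen point. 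I would handle this by intersecting finitely many dense open subsets of the product and pushing them forward under the dominant map $\eta$, using Chevalley's theorem.
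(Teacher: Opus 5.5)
Your argument is correct, but it takes a different route from the one the paper indicates. The paper gives no proof of its own: it cites \cite{CLFS15} and remarks that the proof rests on Theorem~\ref{thm:CBS02}. That suggests the following scheme. One passes from $E_A(\cZ_i,\cZ_j)=0$ to $e_A(\cZ_i,\cZ_j)=0$ via the Auslander--Reiten formula. Crawley-Boevey--Schr\"oer then shows that $\ov{\cZ_1\oplus\cdots\oplus\cZ_m}$ is a component. Only after that are $c_A$ and $E_A$ compared, using Lemma~\ref{lem:special1}.

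You never need Theorem~\ref{thm:CBS02}. Let $\cU$ be the image of $\eta$. You combine:
\begin{itemize}
\item the bound $\dim\ov{\cU}-\dim\cO_M\le c_A(M)$, which holds simply because $\ov{\cU}$ is irreducible and contains $M$;
\item the fibre count $\dim\ov{\cU}-\dim\cO_M=\sum_i c_A(\cZ_i)=\sum_i E_A(M_i)$ (the first half of the computation in Lemma~\ref{lem:special1}, which does not use that the closure is a component), and, under (ii), $\sum_i E_A(M_i)=E_A(M)$;
\item Voigt's inequality $c_A(M)\le E_A(M)$.
\end{itemize}
Together these force $M$ to be $\tau$-regular. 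Equality of dimensions then forces $\ov{\cU}$ to be the unique component through $M$.

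This is more self-contained. However, it hinges on $c_A(\cZ_i)=E_A(M_i)$, that is, on the summands being generically $\tau$-regular. The same trick would not yield Theorem~\ref{thm:CBS02} for arbitrary components, which is what the cited route has behind it.

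One input deserves a better justification than the one you sketch. In (ii)$\Rightarrow$(i) the hypothesis concerns a minimum over all of $\cZ_i\times\cZ_j$. You therefore genuinely need upper semicontinuity of $(M,N)\mapsto E_A(M,N)$ to transport it to your generic pair. Splitting $\dim\Hom_A(N,\tau_A M)$ as $\dim\ov{\Hom}_A(N,\tau_AM)+\dim\cI(N,\tau_AM)$ does not help, since the second term has no visible semicontinuity. A clean argument starts from a minimal projective presentation of $M$, which gives
$$
E_A(M,N)=\dim\Hom_A(M,N)+\sum_k\bigl(\dim\Ext^1_A(M,S(k))-\dim\Hom_A(M,S(k))\bigr)[N:S(k)].
$$
The bracket is upper semicontinuous in $M$. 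Apply $\Hom_A(-,S(k))$ to the bar resolution of $M$ and let $C^0\to C^1\xrightarrow{d^1}C^2$ be the first terms of the resulting complex. The bracket equals $\dim C^1-\dim C^0-\rk(d^1)$, the dimensions of the $C^j$ are constant on $\md(A,\bd)$, and $\rk(d^1)$ is lower semicontinuous.
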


The following lemma should be well known.

\begin{Lem}\label{lem:special1}
For $1 \le i \le m$ let
$\cZ_i \in \irr(A,\bd_i)$
such that
$$
\cZ := \ov{\cZ_1 \oplus \cdots \oplus \cZ_m} \in \irr(A).
$$
Then
$$
c_A(\cZ) = \sum_{i=1}^m c_A(\cZ_i)
\text{\quad and \quad}
e_A(\cZ) = \sum_{i=1}^m e_A(\cZ_i).
$$
\end{Lem}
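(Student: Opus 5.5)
We prove the two equalities separately, using generic points of $\cZ$ of the form $M = M_1 \oplus \cdots \oplus M_m$ with each $M_i$ generic in $\cZ_i$. Such points exist because $\cZ_1 \oplus \cdots \oplus \cZ_m$ is dense in $\cZ$ and constructible, so it contains a dense open subset of $\cZ$. Both $c_A$ and $e_A$ are minima of upper semicontinuous functions. Hence we may choose $M_i \in \cZ_i$ generic so that $\dim \cZ_i - \dim \cO_{M_i} = c_A(\cZ_i)$ and $\dim \Ext_A^1(M_i,M_i) = e_A(\cZ_i)$, and also so that $M$ realizes $c_A(\cZ)$ and $e_A(\cZ)$. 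By Theorem~\ref{thm:CBS02}, we can further arrange $\Ext_A^1(M_i,M_j) = 0$ for $i \neq j$. Each condition is open and dense on the irreducible variety $\GL_\bd \times \cZ_1 \times \cdots \times \cZ_m$, so finitely many of them can be imposed simultaneously.

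For $e_A$ the argument is immediate. Additivity of $\Ext$ gives
$$
\Ext_A^1(M,M) = \bigoplus_{i,j} \Ext_A^1(M_i,M_j) = \bigoplus_i \Ext_A^1(M_i,M_i),
$$
hence $e_A(\cZ) = \sum_i e_A(\cZ_i)$.

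For $c_A$ we compute dimensions via the map $\eta$. By the choice above,
$$
c_A(\cZ) = \dim \cZ - \dim \cO_M, \qquad \dim \cO_M = \dim \GL_\bd - \dim \Aut_A(M).
$$
To compute $\dim \cZ$, we restrict $\eta$ to the open subset of tuples $(M_1,\ldots,M_m)$ where each $M_i$ lies in the dense open subset $\cV_i \subseteq \cZ_i$ on which $\dim \cO_{M_i}$ is maximal. We then estimate the dimension of a generic fibre of $\eta$. The fibre over $M$ consists of pairs $(g,(N_i))$ with $g.(\bigoplus N_i) = M$. By Krull--Remak--Schmidt, there are only finitely many ways to group the indecomposable summands of $M$ into modules with the prescribed dimension vectors, so the tuple $(N_i)$ ranges, up to finitely many choices, over $\prod_i \cO_{M_i}$. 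For fixed $(N_i)$, the elements $g$ form a coset of $\Aut_A(M)$. Therefore the fibre dimension is
$$
\dim \Aut_A(M) + \sum_i \dim \cO_{M_i}.
$$
It follows that
$$
\dim \cZ = \dim \GL_\bd + \sum_i \dim \cZ_i - \dim \Aut_A(M) - \sum_i \dim \cO_{M_i},
$$
and subtracting $\dim \cO_M$ yields $c_A(\cZ) = \sum_i \big(\dim \cZ_i - \dim \cO_{M_i}\big) = \sum_i c_A(\cZ_i)$.

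The main obstacle is making the fibre computation rigorous. One must check that the fibre over a generic point is equidimensional of the stated dimension. The subtle part is that $N_i$ is only determined up to isomorphism by $M$ up to a finite choice of summand regrouping. An alternative that avoids this is to compute $\dim \cZ$ through the tangent-space-free identity $\dim \cZ = \dim \ov{\cO_M}$-family count: take the $\GL_\bd$-saturation of the product $\prod_i \cV_i$ embedded block-diagonally. Then $\dim \cZ$ equals $\dim(\prod_i \cV_i) + \dim \GL_\bd - \dim \mathrm{Stab}$, where we use that the set of block-diagonal modules in the orbit of $M$ has dimension $\sum_i \dim \cO_{M_i}$ plus the finitely many regroupings. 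We will use whichever formulation is cleaner, but the dimension count above is the core of the proof.
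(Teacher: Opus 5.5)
Your proposal is correct and follows essentially the same route as the paper. The $e_A$-equality comes from additivity of $\Ext^1_A$ together with $e_A(\cZ_i,\cZ_j)=0$ from Theorem~\ref{thm:CBS02}, and the $c_A$-equality comes from the fibre dimension $\dim \Aut_A(M) + \sum_i \dim \cO_{M_i}$ of $\eta$ over a generic $M = M_1 \oplus \cdots \oplus M_m$. Your restriction to $\GL_\bd \times \cV_1 \times \cdots \times \cV_m$ is what makes this fibre count rigorous, a step the paper leaves implicit. Each of the finitely many regroupings contributes a piece fibred over $\prod_i \cO_{N_i}$ with fibres cosets of $\Aut_A(M)$, and $N_i \in \cV_i$ forces $\dim \cO_{N_i} = \dim \cO_{M_i}$. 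So the ``alternative'' in your last paragraph is unnecessary.
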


\begin{proof}
Let $\bd := \bd_1 + \cdots + \bd_m$.
We consider the morphism
$$
\eta\df \GL_{\bd} \times \cZ_1 \times \cdots \times \cZ_m \to
\cZ
$$
of affine varieties
which sends $(g,M_1,\ldots,M_m)$ to $g.(M_1 \oplus \cdots \oplus M_m)$.
For $M$ generic in $\cZ$ we can assume that
$M = M_1 \oplus \cdots \oplus M_m$ with $M_i$ generic in $\cZ_i$
for each $1 \le i \le m$.
We get
\begin{align*}
\dim \eta^{-1}(M) &=
\dim \Aut_A(M) + \sum_{i=1}^m \dim \cO_{M_i}.
\end{align*}
It follows that
\begin{align*}
c_A(\cZ) &= \dim(\cZ) - \dim \cO_M
\\
&= \left(\dim \GL_{\bd} + \sum_{i=1}^m \dim(\cZ_i) -
\dim \eta^{-1}(M)\right) -
\dim \cO_M
\\
&=  \left(\dim \GL_{\bd} + \sum_{i=1}^m \dim(\cZ_i)
- \dim \Aut_A(M) -
\sum_{i=1}^m \dim \cO_{M_i}\right) - \dim \cO_M
\\
&= \dim \cO_M + \sum_{i=1}^m c_A(\cZ_i) - \dim \cO_M
\\
&= \sum_{i=1}^m c_A(\cZ_i).
\end{align*}

The equality
$$
e_A(\cZ) = \sum_{i=1}^m e_A(\cZ_i)
$$
follows from Theorem~\ref{thm:CBS02}, which says that
$e_A(\cZ_i,\cZ_j) = 0$ for all $1 \le i, j \le m$ with $i \not= j$.
\end{proof}

\subsection{Derksen and Fei's canonical decomposition of
projective presentations}\label{subsec:DF15}
We recall some results from \cite{DF15}.

For each pair $(P_1,P_0)$ with $P_i \in \proj(A)$ for $i=0,1$
there is a sequence
$$
((P_1^{(1)},P_0^{(1)}), \ldots, (P_1^{(t)},P_0^{(t)}))
$$
of pairs with $P_i^{(j)} \in \proj(A)$ for
$i=0,1$ and $1 \le j \le t$ such that the following hold:
There is a dense open subset $\cU$ of $\Hom_A(P_1,P_0)$ such that
for each $f \in \cU$ there are $f_j \in \Hom_A(P_1^{(j)},P_0^{(j)})$
with $1 \le j \le t$ and an isomorphism
$$
f \cong f_1 \oplus \cdots \oplus f_t
$$
of $2$-complexes, and $f_j$ is an indecomposable $2$-complex for
$1 \le j \le t$.

The above sequence is unique up to permutation of its entries.
We write
$$
(P_1,P_0) = (P_1^{(1)},P_0^{(1)}) \oplus \cdots \oplus
(P_1^{(1)},P_0^{(1)})
$$
and call this the \emph{canonical decomposition} of $(P_1,P_0)$.
This is a slightly modified version of the canonical decomposition introduced and studied in groundbreaking work by
Derksen and Fei \cite{DF15}.

\subsection{Projective presentations of maximal rank and
$\tau$-regular modules}\label{subsec:maximal}
The following result characterizes all $\tau$-regular modules
in terms of their projective presentations.

\begin{Thm}[{\cite[Theorem~1.1]{BS25}}]\label{thm:BS25b}
For $M \in \md(A)$ let
$$
P_1 \xrightarrow{f} P_0 \to M \to 0
$$
be a projective presentation.
Then
the following hold:
\begin{itemize}\itemsep2mm

\item[(i)]
If $\rk(f) = r(P_1,P_0)$, then $M$ is $\tau$-regular;

\item[(ii)]
If $M$ is $\tau$-regular and the presentation above is minimal,
then $\rk(f) = r(P_1,P_0)$.

\end{itemize}
\end{Thm}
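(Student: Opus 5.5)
Part (ii) is exactly the implication (i) $\Rightarrow$ (ii) of Theorem~\ref{thm:BS25a}, so nothing new is needed there. For part (i), the plan is to reduce an arbitrary presentation to a minimal one without losing the maximal rank property. Then the implication (ii) $\Rightarrow$ (i) of Theorem~\ref{thm:BS25a} finishes the proof.

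\emph{Step 1: splitting off the minimal presentation.} Let $P_1 \xrightarrow{f} P_0 \to M \to 0$ be an arbitrary projective presentation. I will use the standard fact that, as a $2$-complex, $f$ is isomorphic to
$$
f' \oplus (Q \xrightarrow{1_Q} Q) \oplus (Q' \to 0),
$$
where $P_1' \xrightarrow{f'} P_0' \to M \to 0$ is a minimal projective presentation and $Q,Q' \in \proj(A)$. The argument proceeds in three moves.
\begin{itemize}
\item The projective cover $P_0' \to M$ is a direct summand of $P_0 \to M$. This gives $P_0 = P_0' \oplus Q$ with $Q$ contained in the image of $f$.
\item Lifting, we get $P_1 = P_1'' \oplus Q$ such that $f$ restricted to $Q$ is, after a change of basis, the identity onto the summand $Q$ of $P_0$. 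We then use automorphisms to clear the off-diagonal entries.
\item The remaining map $P_1'' \to P_0'$ has image $\Omega(M)$. Its domain $P_1''$ therefore splits as the projective cover $P_1'$ of $\Omega(M)$ plus a summand $Q'$ that maps to $0$.
\end{itemize}
This decomposition is the step where some care is required, although it is routine (it is the usual statement that every presentation is the minimal one plus trivial complexes).

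\emph{Step 2: comparing ranks.} Under this isomorphism we have
$$
\rk(f) = \rk(f') + \dim Q.
$$
Suppose $f'$ does not have maximal rank. Then there is some $g \in \Hom_A(P_1',P_0')$ with $\rk(g) > \rk(f')$. Now $g \oplus 1_Q \oplus 0$ lies in $\Hom_A(P_1' \oplus Q \oplus Q', P_0' \oplus Q) \cong \Hom_A(P_1,P_0)$. Its rank is
$$
\rk(g) + \dim Q > \rk(f) = r(P_1,P_0),
$$
which is a contradiction. Hence $\rk(f') = r(P_1',P_0')$.

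\emph{Step 3: conclusion.} Since $f'$ is a minimal presentation of $M$ of maximal rank, Theorem~\ref{thm:BS25a} shows that $M$ is $\tau$-regular. I expect the only real obstacle to be stating the splitting in Step~1 cleanly. The rank comparison itself is immediate.
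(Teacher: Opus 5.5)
Your proof is correct. The paper gives no argument for this statement: it quotes \cite[Theorem~1.1]{BS25} and only remarks that it is a more detailed version of Theorem~\ref{thm:BS25a}. Your proposal makes that remark precise by showing that part~(i) follows formally from the minimal-presentation case.

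Your Step~1 is exactly \cite[Lemma~2.5]{BS25}, so you could cite it rather than reprove it. Your Step~2 is the easy half of \cite[Lemma~3.6]{BS25}. These are the same tools the paper uses to pass between arbitrary and minimal presentations in the proofs of Lemma~\ref{lem:directsums1} and Theorem~\ref{thm:directsums4}. For (i) you only need the implication $\rk(f)=r(P_1,P_0)\Rightarrow \rk(f')=r(P_1',P_0')$. You do not need the extra condition $\Hom_A(Q',M)=0$ that Lemma~3.6 also records. That condition is the reason the converse of (i) fails for non-minimal presentations, and hence why (ii) requires minimality.

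What your route buys is a clear separation: all the substantive geometric content sits in the minimal case, and the rest is bookkeeping with trivial $2$-complexes. What it does not do is reprove that content. Both statements are attributed to the same theorem of \cite{BS25}, so your argument is a valid deduction within this paper, where Theorem~\ref{thm:BS25a} is taken as given. It is not an independent proof of the cited result.
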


Note that Theorem~\ref{thm:BS25b}
is a more detailed version of Theorem~\ref{thm:BS25a}.

\subsection{Direct summands of regular modules and
components}
As a consequence of Theorem~\ref{thm:BS25a}, direct summands of
$\tau$-regular modules and of $\tau$-regular components are again
$\tau$-regular.

Regular modules seem to be much more illusive than $\tau$-regular modules.
We only mention them occasionally.

\begin{Prop}\label{prop:regularsummands1}
For $1 \le i \le m$ let
$\cZ_i \in \irr(A,\bd_i)$
such that
$$
\cZ := \ov{\cZ_1 \oplus \cdots \oplus \cZ_m} \in \irr(A).
$$
Then the following are equivalent:
\begin{itemize}\itemsep2mm

\item[(i)]
$\cZ$ is generically regular;

\item[(ii)]
$\cZ_i$ is generically regular for all $1 \le i \le m$.

\end{itemize}
\end{Prop}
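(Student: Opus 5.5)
Both directions come down to the numerical characterization of regularity: a module $M$ is regular exactly when $c_A(M) = e_A(M)$, and by Proposition~\ref{prop:voigt} we always have $c_A(M) \le e_A(M)$. For a component $\cZ$, being generically regular is therefore equivalent to $c_A(\cZ) = e_A(\cZ)$. The reason is that for $M$ generic in $\cZ$, the number $\dim \md_M(A,\bd) - \dim \cO_M$ equals $\dim \cZ - \dim \cO_M$, which is $c_A(\cZ)$, and $\dim \Ext_A^1(M,M)$ equals $e_A(\cZ)$. Indeed, both functions are semicontinuous, and a generic $M$ lies only in $\cZ$ and has minimal orbit codimension and minimal $\Ext^1$. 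So regularity is an open condition, and $\cZ$ is generically regular if and only if $c_A(\cZ) = e_A(\cZ)$. Once this is set up, the proposition should follow directly from Lemma~\ref{lem:special1}.

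The plan is as follows. First I would make the equivalence just described precise. If $M$ is generic in $\cZ$ and regular, then $c_A(\cZ) = c_A(M) = e_A(M) = e_A(\cZ)$. Conversely, if $c_A(\cZ) = e_A(\cZ)$, then any generic $M$ satisfies $c_A(M) = e_A(M)$, so $M$ is regular. Since $c_A(M_i) \le e_A(M_i)$ holds for every $M_i$, we also get $c_A(\cZ_i) \le e_A(\cZ_i)$ for each $i$. Here one must check that the minima defining $c_A(\cZ_i)$ and $e_A(\cZ_i)$ are attained simultaneously at a generic point. This holds because both are attained on dense open subsets of an irreducible variety.

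Next I would apply Lemma~\ref{lem:special1}, which gives $c_A(\cZ) = \sum_i c_A(\cZ_i)$ and $e_A(\cZ) = \sum_i e_A(\cZ_i)$. Thus
$$
e_A(\cZ) - c_A(\cZ) = \sum_{i=1}^m \bigl(e_A(\cZ_i) - c_A(\cZ_i)\bigr),
$$
and every summand on the right is nonnegative. Hence the left side vanishes if and only if each summand vanishes, that is, $\cZ$ is generically regular if and only if every $\cZ_i$ is generically regular.

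The only delicate point is the first step, identifying generic regularity with $c_A(\cZ)=e_A(\cZ)$. Specifically, one must make sure that for a generic $M\in\cZ$ the quantity $\dim\md_M(A,\bd)$ equals $\dim\cZ$. This holds because the complement of the union of the other components is a dense open subset of $\cZ$. Once that is in place, everything else is bookkeeping with Lemma~\ref{lem:special1}.
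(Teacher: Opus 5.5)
Your proposal is correct and is the paper's proof. Lemma~\ref{lem:special1} gives $c_A(\cZ)=\sum_i c_A(\cZ_i)$ and $e_A(\cZ)=\sum_i e_A(\cZ_i)$, and since each difference $e_A(\cZ_i)-c_A(\cZ_i)$ is nonnegative, the total difference vanishes exactly when every term does; your extra step identifying generic regularity of a component with $c_A(\cZ)=e_A(\cZ)$ just makes explicit what the paper's one-line argument uses implicitly.
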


\begin{proof}
By Lemma~\ref{lem:special1}, we have
$$
c_A(\cZ) = \sum_{i=1}^m c_A(\cZ_i)
\text{\quad and \quad}
e_A(\cZ) = \sum_{i=1}^m e_A(\cZ_i).
$$
Since $c_A(\cZ) \le e_A(\cZ)$ and
$c_A(\cZ_i) \le e_A(\cZ_i)$ for all $i$, the claim follows.
\end{proof}

\begin{Qu}\label{qu:regularsummands2}
\quad
\begin{itemize}\itemsep2mm

\item[(i)]
Are direct summands of regular modules again regular?

\item[(ii)]
Let $M \in \md(A)$ such that $M^t$ is regular
for some $t \ge 2$.
Does it follow that $M$ is regular?

\end{itemize}
\end{Qu}


\section{Direct sums of $\tau$-regular modules and components}\label{sec:directsums}


\begin{Lem}\label{lem:regular1}
Let $M \in \taureg(A)$, and let $t \ge 1$.
Then the following are equivalent:
\begin{itemize}\itemsep2mm

\item[(i)]
$M^t \in \taureg(A)$;

\item[(ii)]
$M^t$ is regular.

\end{itemize}
\end{Lem}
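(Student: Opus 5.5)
The direction (i) $\Rightarrow$ (ii) needs nothing about $M$. If $M^t$ is $\tau$-regular, then $c_A(M^t) = E_A(M^t)$. By Proposition~\ref{prop:voigt}, $c_A(M^t) \le e_A(M^t) \le E_A(M^t)$, so all three numbers coincide. In particular $c_A(M^t) = e_A(M^t)$, which says that $M^t$ is regular.

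For (ii) $\Rightarrow$ (i), I will compare $M^t$ with the generic module of the component containing $M^t$.

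First I set up the component and its invariants. Since $M$ is $\tau$-regular, it lies in a unique component $\cZ \in \irr(A,\bd)$, and $c_A(M) = e_A(M) = E_A(M)$ by Proposition~\ref{prop:voigt}. Because $M^t$ is regular, it lies in a unique component $\cZ'$ of $\md(A,t\bd)$, and $c_A(M^t) = \dim \cZ' - \dim \cO_{M^t}$. The numbers $E_A$ are additive on direct sums, so
$$
E_A(M^t) = t^2 E_A(M) = t^2 c_A(M).
$$
It therefore suffices to show
$$
c_A(M^t) \ge t^2 c_A(M).
$$
The reverse inequality is automatic from Proposition~\ref{prop:voigt}.

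Next I would realize $\cZ^t$ inside $\cZ'$. Since $\cZ^t$ is irreducible and contains $M^t$, which lies in the unique component $\cZ'$, we get $\ov{\cZ^t} \subseteq \cZ'$. A generic $N \in \cZ$ is $\tau$-regular, since the $\tau$-regular modules form an open set containing $M$. Write $E = E_A(N) = E_A(M)$. Then $\dim \cZ = \dim \cO_N + E$, and upper semicontinuity gives $\dim \cO_N \ge \dim \cO_M$, so $\dim \cZ - \dim \cO_M \ge E$ (this equals $c_A(M) = E$).

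The key step is to bound $\dim \cZ'$ from below by a fibre dimension count. I would use the family of modules $g.(N_1 \oplus \cdots \oplus N_t)$ with $N_i \in \cZ$, as in the proof of Lemma~\ref{lem:special1}. Its dimension is
$$
\dim \GL_{t\bd} + t\dim \cZ - \dim(\text{generic fibre}),
$$
and the generic fibre over $N_1 \oplus \cdots \oplus N_t$ has dimension $\dim \Aut(\bigoplus N_i) + \sum_i \dim \cO_{N_i}$.

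This count alone only gives $tE$ plus a term in $\dim \Hom$ between different summands, not $t^2E$. So the first thing I would try instead is to use semicontinuity of $E_A$ directly. The set of $\tau$-regular modules is open, so it suffices to show that the generic module $L \in \cZ'$ is $\tau$-regular. Near $M^t$, the function $E_A$ is upper semicontinuous on $\cZ'$, which gives $E_A(L) \le E_A(M^t)$. Regularity of $M^t$ gives $e_A(M^t) = \dim \cZ' - \dim \cO_{M^t}$. Combining these requires $\dim \Ext^1_A(M^t,M^t) = t^2 E$, and $t^2 e_A(M) = t^2 E$ holds since $e_A$ is also additive on direct sums and $e_A(M) = E$.

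Then $\dim \cZ' - \dim \cO_{M^t} = t^2E = E_A(M^t)$, that is, $c_A(M^t) = E_A(M^t)$, so $M^t$ is $\tau$-regular. In other words, (ii) $\Rightarrow$ (i) reduces to the chain
$$
c_A(M^t) = e_A(M^t) = t^2 e_A(M) = t^2 E_A(M) = E_A(M^t),
$$
and the point to verify carefully is that the module-level invariants, not component minima, are being used at each step. That verification is the main obstacle; the dimension-counting route above is a fallback only if the chain fails.
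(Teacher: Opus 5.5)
Once the detour is stripped away, your argument is the paper's proof. For (i)$\Rightarrow$(ii) you use the inequalities $c_A\le e_A\le E_A$ of Proposition~\ref{prop:voigt}. For (ii)$\Rightarrow$(i) you use the chain $c_A(M^t)=e_A(M^t)=t^2e_A(M)=t^2E_A(M)=E_A(M^t)$.

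The ``obstacle'' you flag at the end is not one. The invariants $c_A$, $e_A$, $E_A$ are defined for individual modules. The two $t^2$-equalities follow at once from $\tau_A(M^t)\cong\tau_A(M)^t$ and additivity of $\Ext^1_A$ and $\Hom_A$ in each variable.

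You should delete the detour, because it contains false steps.
\begin{itemize}
\item A generic $N\in\cZ$ need not satisfy $E_A(N)=E_A(M)$. Over the Kronecker algebra with $\bd=(1,1)$, every module is $\tau$-regular. Yet $E_A=1$ generically, while $E_A(S(1)\oplus S(2))=2$.
\item Proving that a generic $L\in\cZ'$ is $\tau$-regular would not show that $M^t$ is. Openness of the $\tau$-regular locus only passes the property from $M^t$ to nearby points, not back.
\end{itemize}
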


\begin{proof}
(i) $\implies$ (ii): Obvious.

(ii) $\implies$ (i):
Assuming that $M^t$ is regular, we get
$$
c_A(M^t) = e_A(M^t) = t^2e_A(M)  = t^2E_A(M) = E_A(M^t).
$$
(The third equality follows from the assumption
that $M$ is $\tau$-regular.
The second and the fourth equality are true for
arbitrary $M \in \md(A)$.)
Thus $M^t \in \taureg(A)$.
\end{proof}

For all $P_1,P_0 \in \proj(A)$ and $t \ge 1$ we obviously have
$$
r(P_1^t,P_0^t) \ge t \cdot r(P_1,P_0).
$$

\begin{Lem}\label{lem:directsums3}
Let $M \in \taureg(A)$, and
let
$$
P_1 \to P_0 \to M \to 0
$$
be a minimal projective presentation
of $M$.
Then the following are equivalent:
\begin{itemize}\itemsep2mm

\item[(i)]
$M^t \in \taureg(A)$ for all $t \ge 1$;

\item[(ii)]
$r(P_1^t,P_0^t) = t \cdot r(P_1,P_0)$
for all $t \ge 1$.

\end{itemize}
\end{Lem}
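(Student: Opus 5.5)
The approach is to apply Theorem~\ref{thm:BS25b} to the presentation of $M^t$ obtained by taking direct sums. Write the minimal presentation as $P_1 \xrightarrow{f} P_0 \to M \to 0$. Since $M$ is $\tau$-regular, Theorem~\ref{thm:BS25b}(ii) (equivalently Theorem~\ref{thm:BS25a}) gives $\rk(f) = r(P_1,P_0)$. For each $t \ge 1$ the direct sum
$$
P_1^t \xrightarrow{f^{\oplus t}} P_0^t \to M^t \to 0
$$
is again a projective presentation of $M^t$. It is moreover a minimal projective presentation, because direct sums of projective covers are projective covers, and this applies both to $P_0 \to M$ and to $P_1 \to \Ker(P_0 \to M)$. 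The rank is additive, so $\rk(f^{\oplus t}) = t \cdot \rk(f) = t \cdot r(P_1,P_0)$.

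(ii) $\implies$ (i): Assume $r(P_1^t,P_0^t) = t \cdot r(P_1,P_0)$. Then $\rk(f^{\oplus t}) = r(P_1^t,P_0^t)$, so $f^{\oplus t}$ is a presentation of maximal rank. Theorem~\ref{thm:BS25b}(i) then gives $M^t \in \taureg(A)$. This direction does not even need minimality.

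(i) $\implies$ (ii): Assume $M^t \in \taureg(A)$. Since $f^{\oplus t}$ is a minimal presentation of $M^t$, Theorem~\ref{thm:BS25b}(ii) yields $\rk(f^{\oplus t}) = r(P_1^t,P_0^t)$. Hence $r(P_1^t,P_0^t) = t \cdot \rk(f) = t \cdot r(P_1,P_0)$. The argument works for each fixed $t$ separately, so it proves slightly more than stated: for every single $t$, $M^t$ is $\tau$-regular if and only if $r(P_1^t,P_0^t) = t \cdot r(P_1,P_0)$.

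The only point that needs care, and the expected obstacle, is the minimality of $f^{\oplus t}$, which is required for part (ii) of Theorem~\ref{thm:BS25b}. I would justify it by noting that a presentation is minimal exactly when $\Ima(f) \subseteq \operatorname{rad} P_0$ and $f$ admits no nonzero direct summand of the form $Q \to 0$. Equivalently, $f$ has no summand of the form $Q \xrightarrow{1} Q$ or $Q \to 0$. Both conditions are clearly preserved under finite direct sums, since $\operatorname{rad}(P_0^t) = (\operatorname{rad} P_0)^t$ and $\Ker(f^{\oplus t}) = \Ker(f)^t \subseteq (\operatorname{rad} P_1)^t$.
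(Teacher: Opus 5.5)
Your proof is correct and is essentially the paper's argument. The paper's proof is the single remark that the lemma follows directly from Theorem~\ref{thm:BS25a}; it leaves implicit exactly what you spell out. That is, $f^{\oplus t}$ is a minimal projective presentation of $M^t$ with $\rk(f^{\oplus t}) = t\cdot\rk(f) = t\cdot r(P_1,P_0)$, and the characterization is applied to it. Your observation that the equivalence holds for each fixed $t$ separately is also correct.
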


\begin{proof}
This follows directly from Theorem~\ref{thm:BS25a}.
\end{proof}

For $f \in \Hom_A(M,N)$ and $t \ge 1$, let
$$
f^{\oplus t} := \left(\bbm f &0&0\\0&\ddots&0\\0&0&f
\ebm\right)\df M^t \to N^t
$$
be the $t$-fold direct sum of $f$.

\begin{Lem}\label{lem:directsums1}
Let $\cZ \in \irr(A)$.
Then the following are equivalent:
\begin{itemize}\itemsep2mm

\item[(i)]
$\cZ^t$ is $\tau$-regular for all $t \ge 1$;

\item[(ii)]
For all $M \in \cZ \cap \taureg(A)$, we have
$M^t \in \taureg(A)$ for all $t \ge 1$.

\end{itemize}
\end{Lem}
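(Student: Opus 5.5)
Throughout I assume that $\cZ$ is generically $\tau$-regular, as in Theorem~\ref{thm:intromain1.4}. Otherwise (i) already fails for $t=1$ while (ii) holds vacuously, so this hypothesis is needed for the equivalence.

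\emph{(ii) $\implies$ (i).} This direction should be immediate. Choose any $M \in \cZ \cap \taureg(A)$; such an $M$ exists because $\cZ \in \irr^\tau(A)$. By (ii) we have $M^t \in \taureg(A)$, and $M^t \in \cZ^t$ by definition. So the irreducible constructible set $\cZ^t$ contains a $\tau$-regular module, i.e.\ it is generically $\tau$-regular.

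\emph{(i) $\implies$ (ii).} Fix $M \in \cZ \cap \taureg(A)$ with minimal presentation $P_1 \xrightarrow{f} P_0 \to M \to 0$. By Lemma~\ref{lem:directsums3} it suffices to show $r(P_1^t,P_0^t) = t\cdot r(P_1,P_0)$, and I would argue entirely in $\Hom_A(P_1,P_0)$.

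First I transport $M$ to a generic point of $\cZ$. By Theorem~\ref{thm:BS25b}(ii), $f$ has maximal rank, so $f$ lies in the open set $\cU \subseteq \Hom_A(P_1,P_0)$ of maximal-rank maps. On $\cU$ the cokernel has constant dimension vector $\bd$. Choosing bases varying algebraically on $\cU$, we get a morphism $\cU \to \md(A,\bd)$ (locally, up to the $\GL_\bd$-action) with $f \mapsto M$. Since $\cU$ is irreducible and, by Theorem~\ref{thm:BS25b}(i), all its cokernels are $\tau$-regular, and a $\tau$-regular module lies in a unique component, the image of $\cU$ lies in $\cZ$. I also expect it to be dense in $\cZ$: near $M$ the component $\cZ$ is smooth, and modules close to $M$ have presentations close to $f$, because the top cannot grow under deformation and the presentation lifts. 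Hence a generic $N \in \cZ$ is $\operatorname{Coker}(f')$ for some generic $f' \in \cU$; the presentation $f'$ need not be minimal.

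Next I pass to $t$ copies. Take $f'_1,\ldots,f'_t$ generic in $\cU$. By (i), a generic element of $\cZ^t$, namely $N_1 \oplus\cdots\oplus N_t$ with $N_i = \operatorname{Coker}(f'_i)$, is $\tau$-regular. I then want to conclude that $g := f'_1\oplus\cdots\oplus f'_t$ has maximal rank in $\Hom_A(P_1^t,P_0^t)$. Granting this, lower semicontinuity of rank along the path from $g$ to $f^{\oplus t}$ gives
$$
\rk(f^{\oplus t}) = t\,\rk(f) = t\cdot r(P_1,P_0) = \rk(g) = r(P_1^t,P_0^t),
$$
which is what we need.

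\emph{Main obstacle.} The difficulty is exactly the step just granted: Theorem~\ref{thm:BS25b}(ii) gives maximality of rank only for \emph{minimal} presentations, and $g$ may not be minimal. I would handle it by writing
$$
g \cong g_{\min} \oplus 1_R \oplus (R' \to 0),
$$
where $g_{\min}$ is the minimal presentation of $N_1\oplus\cdots\oplus N_t$. Then I would compare dimension vectors of cokernels of a generic element $h$ of $\Hom_A(P_1^t,P_0^t)$ with those of $g$. If $\rk(h) > \rk(g)$, then $\operatorname{Coker}(h)$ has strictly smaller total dimension. Near $g$ we can choose $h$ to be a deformation that leaves the split summand $1_R$ untouched. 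Such an $h$ then restricts to a map of the minimal terms of rank exceeding $\rk(g_{\min})$, contradicting Theorem~\ref{thm:BS25b}(ii) for the $\tau$-regular module $N_1\oplus\cdots\oplus N_t$. The part needing care is that the summand $R' \to 0$ does not contribute extra rank. I would try to settle this using that $N_i$ generic already means $f'_i$ is generic of maximal rank, so the $R'$-part is forced by dimension counting.
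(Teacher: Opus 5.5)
Your strategy is the paper's. You use Fei's Lemma to realize generic $N_i\in\cZ$ as cokernels of maximal-rank $f_i'\in\Hom_A(P_1,P_0)$, use (i) to make $N=N_1\oplus\cdots\oplus N_t$ $\tau$-regular, split $g=f_1'\oplus\cdots\oplus f_t'\cong g_{\min}\oplus 1_R\oplus(R'\to 0)$ with $g_{\min}\colon Q_1\to Q_0$, and deduce $r(P_1^t,P_0^t)=\rk(g)=t\cdot r(P_1,P_0)$. You are right that the lemma needs $\cZ\in\irr^\tau(A)$, as in Theorem~\ref{thm:intromain1.4}; the paper's (ii)$\implies$(i) tacitly uses $\cZ\cap\taureg(A)\neq\emptyset$. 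No semicontinuity is needed at the end, since $\rk(f^{\oplus t})=t\,\rk(f)=\rk(g)$ directly.

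However, the step you call the main obstacle is the actual content of the proof, and it is not established. Your claim that a rank-increasing $h$ near $g$ ``restricts to a map of the minimal terms of rank exceeding $\rk(g_{\min})$'' is false as stated. After using the invertible $R\to R$ block to split off the identity part, $h\cong (a,c)\oplus(R\xrightarrow{e}R)$ with $(a,c)\colon Q_1\oplus R'\to Q_0$. The extra rank can come entirely from $c\colon R'\to Q_0$. What is missing is the vanishing $\Hom_A(R',N)=0$. Together with maximality of $\rk(g_{\min})$, this is exactly the criterion \cite[Lemma~3.6]{BS25} that the paper invokes in both directions. Your appeal to ``dimension counting'' does not supply it.

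You can close the gap as follows.
\begin{itemize}
\item Write $f_i'\cong (f_i')_{\min}\oplus 1_{R_i}\oplus(R_i'\to 0)$. Since $\rk(f_i')$ is maximal, $\Hom_A(R_i',N_i)=0$. Otherwise, lift a nonzero map $R_i'\to N_i$ to $c\colon R_i'\to P_0$; putting $c$ on the $R_i'$-component strictly raises the rank.
\item Since $\dim\Hom_A(R_i',X)$ depends only on $\dimv X$, and all $N_j\in\cZ$ have dimension vector $\bd$, you get $\Hom_A(R_i',N_j)=0$ for all $i,j$. Hence $\Hom_A(R',N)=0$ for $R'=\bigoplus_i R_i'$. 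The paper gets this instead from the $i$-independence of the decomposition types via Derksen--Fei.
\item By Theorem~\ref{thm:BS25b}(ii), $\rk(g_{\min})=r(Q_1,Q_0)$. For every $a\in\Hom_A(Q_1,Q_0)$ of maximal rank, $\Coker(a)$ has dimension vector $\dimv N$, by your own argument for $\cU$. So $\Hom_A(R',\Coker a)=0$, which means $\Ima(c)\subseteq\Ima(a)$ and $\rk(a,c)=\rk(a)$.
\item Such pairs $(a,c)$ form a dense open subset of $\Hom_A(Q_1\oplus R',Q_0)$, so $r(Q_1\oplus R',Q_0)=\rk(g_{\min})$.
\item Now choose a maximal-rank $h$ whose $R\to R$ block is invertible; both conditions are open and nonempty. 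Then $r(P_1^t,P_0^t)=\rk(h)\le\dim R+\rk(g_{\min})=\rk(g)$, which is the claim you needed.
\end{itemize}
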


\begin{proof}
(ii) $\implies$ (i):
This follows directly from the definitions.

(i) $\implies$ (ii):
Let $M \in \cZ \cap \taureg(A)$, and let
$$
P_1 \xrightarrow{f_M} P_0 \to M \to 0
$$
be a minimal projective presentation of $M$.
By Theorem~\ref{thm:BS25a} we have
$\rk(f_M) = r(P_1,P_0)$.
We can choose some generic $N$ in $\cZ^t$ such
that $N = N_1 \oplus \cdots \oplus N_t$ with $N_i$
generic in $\cZ$ for each $1 \le i \le t$.
In particular, $N$ and all $N_i$ are $\tau$-regular.
By Fei's Lemma \cite[Lemma~3.2]{BS25},
we can assume that there is a projective presentation
$$
P_1 \xrightarrow{f_i} P_0 \to N_i \to 0
$$
for each $1 \le i \le t$.
The $2$-complex
$$
P_1 \xrightarrow{f_i} P_0
$$
is isomorphic to a direct sum
$$
(P_1' \xrightarrow{f_i'} P_0')
\oplus (P \xrightarrow{1_P} P)
\oplus (P' \to 0)
$$
of $2$-complexes of project $A$-modules
where
$$
P_1' \xrightarrow{f_i'} P_0' \to N_i \to 0
$$
is a minimal projective presentation, see \cite[Lemma~2.5]{BS25}.
Note that $\rk(f_i) = \rk(f_M) = r(P_1,P_0)$.
Then
\cite[Lemma~3.6]{BS25} says that
$\rk(f_i') = r(P_1',P_0')$ and
$\Hom_A(P',N_i) = 0$.
Thus we also have $\Hom_A(P',N) = 0$.

Since $N_i$ is generic in $\cZ$ for $1 \le i \le t$,
the pairs $(P_1',P_0')$, $(P,P)$ and $(P',0)$ do not depend on $i$.
(This follows from the existence of the canonical decomposition
of $(P_1,P_0)$ in the sense of Derksen and Fei \cite{DF15}
(see Section~\ref{subsec:DF15}) and the fact that
generic morphisms in $\Hom_A(P_1,P_0)$ correspond to generic
modules in $\cZ$.
For more details on this correspondence, we refer to
\cite[Section~5]{BS25}, which contains an account of Plamondon's
\cite{P13} construction of generically $\tau$-regular irreducible components.)

We get
a projective presentation
$$
P_1^t \xrightarrow{f} P_0^t \to N \to 0
$$
where
$$
f = \left(\bbm f_1 & 0&0\\
0 & \ddots & 0\\
0&0 &f_t \ebm\right),
$$
such that the $2$-complex
$$
P_1^t \xrightarrow{f} P_0^t
$$
is isomorphic to
$$
(P_1'^t \xrightarrow{f'} P_0'^t)
\oplus (P^t \xrightarrow{1_{P^t}} P^t)
\oplus (P'^t \to 0)
$$
where
$$
f' = \left(\bbm f_1' & 0&0\\
0 & \ddots & 0\\
0&0 &f_t' \ebm\right).
$$
Note that
$$
P_1'^t \xrightarrow{f'} P_0'^t \to N \to 0
$$ is a minimal projective presentation.
Since $N$ is $\tau$-regular, Theorem~\ref{thm:BS25a} implies
$\rk(f') = r(P_1'^t,P_0'^t)$.
Now \cite[Lemma~3.6]{BS25} yields
$$
r(P_1^t,P_0^t) = \rk(f) = \sum_{i=1}^t \rk(f_i) = t \cdot r(P_1,P_0) = t \cdot \rk(f_M) = \rk(f_M^{\oplus t}).
$$
Thus by Theorem~\ref{thm:BS25a}, $M^t$ is $\tau$-regular.
\end{proof}

\begin{Lem}\label{lem:directsums2}
Let $\cZ \in \irr^\tau(A)$ with $\pdim(\cZ) \le 1$.
Then $\cZ^t$ is $\tau$-regular for all $t \ge 1$.
\end{Lem}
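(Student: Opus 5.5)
Let $\cZ \in \irr^\tau(A)$ with $\pdim(\cZ) \le 1$, and fix $t \ge 1$. We want a $\tau$-regular module in $\cZ^t$. Since $\tau$-regular modules and modules of projective dimension at most one both form dense open subsets of $\cZ$, there is some $M \in \cZ$ that is $\tau$-regular and has $\pdim(M) \le 1$. Then $M^t \in \cZ^t$ and $\pdim(M^t) \le 1$. The plan is to show that $M^t$ is $\tau$-regular, which gives the claim since $\cZ^t$ contains a $\tau$-regular module.

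For this, a first route is to use that modules of projective dimension at most one are $\tau$-regular, which is recorded in the hierarchy $\cP_{\le 1}(A) \subseteq \taureg(A)$ from the introduction. This inclusion comes from \cite{BS25} via Theorem~\ref{thm:BS25a}. A projective presentation $0 \to P_1 \xrightarrow{f} P_0 \to M \to 0$ with $f$ injective satisfies $\rk(f) = \dim P_1$, which is obviously the maximal possible rank of any map $P_1 \to P_0$. Hence $\rk(f) = r(P_1,P_0)$, and Theorem~\ref{thm:BS25b}(i) shows that $M$ is $\tau$-regular.

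Applying the same argument to $M^t$, the presentation $0 \to P_1^t \xrightarrow{f^{\oplus t}} P_0^t \to M^t \to 0$ has $f^{\oplus t}$ injective. Therefore
$$
\rk(f^{\oplus t}) = t\dim P_1 = \dim P_1^t \ge r(P_1^t,P_0^t),
$$
so equality holds, and Theorem~\ref{thm:BS25b}(i) gives $M^t \in \taureg(A)$.

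The only step needing care is the existence of a module that is simultaneously $\tau$-regular and of projective dimension at most one in $\cZ$. This follows from irreducibility of $\cZ$, since two nonempty open subsets intersect; the first subset is nonempty because $\cZ$ is generically $\tau$-regular, the second because $\pdim(\cZ) \le 1$ together with upper semicontinuity. In fact the argument shows directly that the generic module of $\cZ$ already has $\pdim \le 1$ and is $\tau$-regular, so no genuine obstacle remains.
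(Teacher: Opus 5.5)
Your proof is correct and takes the same approach as the paper: take $M \in \cZ$ with $\pdim(M) \le 1$, note that $\pdim(M^t) \le 1$, and conclude that $M^t \in \cZ^t$ is $\tau$-regular. You additionally justify $\cP_{\le 1}(A) \subseteq \taureg(A)$ via the injective presentation and Theorem~\ref{thm:BS25b}(i); your intersection of two dense open subsets is harmless but unnecessary, since $\pdim(M) \le 1$ alone already forces $M$ to be $\tau$-regular.
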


\begin{proof}
Let $M \in \cZ$ with $\pdim(M) \le 1$.
Then $\pdim(M^t) \le 1$ for all $t \ge 1$.
Thus $M^t$ and therefore also $\cZ^t$
is $\tau$-regular for all $t \ge 1$.
\end{proof}

\begin{Thm}\label{thm:directsums4}
The following are equivalent:
\begin{itemize}\itemsep2mm

\item[(i)]
For all $\cZ \in \irr^\tau(A)$, we have
$\cZ^t$ is $\tau$-regular for all $t \ge 1$;

\item[(ii)]
For all $M \in \taureg(A)$, we have
$M^t \in \taureg(A)$ for all $t \ge 1$;

\item[(iii)]
For all $P_1,P_0 \in \proj(A)$, we have
$$
r(P_1^t,P_0^t) = t \cdot r(P_1,P_0)
$$
for all $t \ge 1$.

\end{itemize}
\end{Thm}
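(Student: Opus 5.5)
The plan is to prove the cycle (i) $\iff$ (ii) $\iff$ (iii), reducing everything to Lemma~\ref{lem:directsums1}, Lemma~\ref{lem:directsums3} and Theorem~\ref{thm:BS25b}, together with the splitting lemmas from \cite{BS25} already used in the proof of Lemma~\ref{lem:directsums1}.

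For (i) $\iff$ (ii), I will use that each $M \in \taureg(A)$ lies in a unique irreducible component $\cZ$, and that this $\cZ$ belongs to $\irr^\tau(A)$. Conversely, each $\cZ \in \irr^\tau(A)$ contains $\tau$-regular modules. Applying Lemma~\ref{lem:directsums1} componentwise then gives the equivalence. Concretely, (i) for all $\cZ \in \irr^\tau(A)$ yields (ii) for all $M \in \cZ \cap \taureg(A)$, hence for all $M \in \taureg(A)$; the reverse implication is the direction (ii) $\implies$ (i) of that lemma.

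The implication (iii) $\implies$ (ii) is immediate. Take $M \in \taureg(A)$ with minimal projective presentation $P_1 \to P_0 \to M \to 0$. Hypothesis (iii) applied to this pair $(P_1,P_0)$ gives $r(P_1^t,P_0^t) = t \cdot r(P_1,P_0)$, and Lemma~\ref{lem:directsums3} then yields $M^t \in \taureg(A)$ for all $t$.

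The substantive direction is (ii) $\implies$ (iii), since $(P_1,P_0)$ is now arbitrary and need not come from a minimal presentation.
\begin{itemize}
\item Choose $f \in \Hom_A(P_1,P_0)$ with $\rk(f) = r(P_1,P_0)$, and set $M := \Coker(f)$. By Theorem~\ref{thm:BS25b}(i), $M$ is $\tau$-regular.
\item By \cite[Lemma~2.5]{BS25}, write the $2$-complex $f$ as
$(P_1' \xrightarrow{f'} P_0') \oplus (P \xrightarrow{1_P} P) \oplus (P' \to 0)$, where $f'$ is a minimal presentation of $M$.
\item By \cite[Lemma~3.6]{BS25}, maximality of $\rk(f)$ gives $\rk(f') = r(P_1',P_0')$ and $\Hom_A(P',M) = 0$.
\item By (ii), $M^t$ is $\tau$-regular, and $f'^{\oplus t}$ is a minimal presentation of $M^t$. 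Theorem~\ref{thm:BS25a} then gives $\rk(f'^{\oplus t}) = r(P_1'^t,P_0'^t)$.
\item Clearly $\Hom_A(P'^t,M^t) = 0$. Now $f^{\oplus t}$ decomposes as $f'^{\oplus t} \oplus 1_{P^t} \oplus (P'^t \to 0)$.
\item Applying \cite[Lemma~3.6]{BS25} in the converse direction, as in the last step of the proof of Lemma~\ref{lem:directsums1}, gives $r(P_1^t,P_0^t) = \rk(f^{\oplus t}) = t \cdot \rk(f) = t \cdot r(P_1,P_0)$.
\end{itemize}

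The main obstacle is this final step. It needs the converse form of \cite[Lemma~3.6]{BS25}: if the minimal part has maximal rank and $\Hom_A(P',\Coker) = 0$, then the full map $f^{\oplus t}$ has maximal rank in $\Hom_A(P_1^t,P_0^t)$. This is how that lemma is used at the end of Lemma~\ref{lem:directsums1}, so I expect it to be available as an equivalence. If it were only one-directional, I would instead prove directly that a rank-maximizing map on $P_1^t \to P_0^t$ can be assumed to split off $1_{P^t}$, and then bound the rank on the remaining summand.
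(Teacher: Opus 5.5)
Your proposal is correct and follows essentially the same route as the paper: (i)$\iff$(ii) via Lemma~\ref{lem:directsums1}, (iii)$\implies$(ii) via Lemma~\ref{lem:directsums3}, and (ii)$\implies$(iii) by taking a maximal-rank $f$ (the paper takes $f$ generic), splitting it via \cite[Lemmas~2.5 and 3.6]{BS25}, and then applying \cite[Lemma~3.6]{BS25} in the converse direction to $f^{\oplus t}$. The paper relies on that lemma as an equivalence in exactly the way you anticipate, so your fallback plan is not needed.
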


\begin{proof}
(i) $\iff$ (ii):
Each $M \in \taureg(A)$ is contained in a (unique)
$\cZ \in \irr^\tau(A)$.
Thus the claim
follows directly from Lemma~\ref{lem:directsums1}.

(iii) $\implies$ (ii):
This follows from Lemma~\ref{lem:directsums3}.

(ii) $\implies$ (iii):
Let $P_1,P_0 \in \proj(A)$, and let
$$
P_1 \xrightarrow{f} P_0 \to M \to 0
$$
be a projective presentation of some $M$ with $f$ generic in
$\Hom_A(P_1,P_0)$.
In particular, we have $\rk(f) = r(P_1,P_0)$.
By Theorem~\ref{thm:BS25b}(i), we get that $M$ is $\tau$-regular.

We get an isomorphism
$$
(P_1 \xrightarrow{f} P_0) \cong
(P_1^M \xrightarrow{f_M} P_0^M) \oplus
(P \xrightarrow{1_P} P) \oplus
(P_f \to 0)
$$
of $2$-complexes of projective $A$-modules, where
$$
P_1^M \xrightarrow{f_M} P_0^M \to M \to 0
$$
is a minimal projective presentation of $M$ and
$\Hom_A(P_f,M) = 0$, compare \cite[Lemmas~2.5 and 3.6]{BS25}.

Since $\rk(f)$ is maximal, we have
$r(P_1,P_0) = \rk(f) = r(P_1^M,P_0^M) + r(P,P)$.

Then
$$
(P_1^M)^t \xrightarrow{f_M^{\oplus t}} (P_0^M)^t \to M^t \to 0
$$
is a minimal projective presentation of $M^t$.

We get an isomorphism
$$
(P_1^t \xrightarrow{f^{\oplus t}} P_0^t) \cong
((P_1^M)^t \xrightarrow{f_M^{\oplus t}} (P_0^M)^t) \oplus
(P^t \xrightarrow{1_{P^t}} P^t) \oplus
(P_f^t \to 0)
$$
of $2$-complexes of projective $A$-modules,

By (ii), the module $M^t$ is $\tau$-regular.
Thus by Theorem~\ref{thm:BS25a}, we have
$$
\rk(f_M^{\oplus t}) = r((P_1^M)^t,(P_0^M)^t).
$$
Moreover, $\Hom (P_f^t, M^t) = 0$.

Now \cite[Lemma~3.6]{BS25} (applied to $f^{\oplus t}$) says that
\[
t \cdot r(P_1,P_0) = \rk(f^{\oplus t}) = r(P_1^t,P_0^t).
\qedhere
\]
\end{proof}


\section{Examples}\label{sec:examples}


The aim of this section is to
prove Theorem~\ref{thm:intromain1.6}, which names
some classes of algebras $A$ such that
\begin{equation}\label{eq:maxrank}
r(P^t,P_0^t) = t \cdot r(P_1,P_0)
\text{ for all }
P_1,P_0 \in \proj(A) \text{ and } t \ge 1. \tag{$\star$}
\end{equation}
Recall that this
condition appears also in Theorem~\ref{thm:intromain1.5}.

We also construct one example, where Condition~(\ref{eq:maxrank}) does not hold.

\subsection{$\tau$-tilting finite algebras}\label{subsec:ex1}
Assume that
$A$ is \emph{$\tau$-tilting finite}, i.e.\ there are only
finitely many indecomposable $\tau$-rigid $A$-modules, up to isomorphism.
These algebras were defined and studied by
Demonet, Iyama and Jasso \cite{DIJ19}.

Then each $\cZ \in \irr^\tau(A)$ is of the form
$$
\cZ = \ov{\cO_M}
$$
for some $M \in \taurigid(A)$.
(This follows by combining results in \cite{DIJ19} and
\cite{A21}.)

We obviously have $M^t \in \taurigid(A)$ for all such $M$ and
$t \ge 1$.
Thus Theorem~\ref{thm:intromain1.5} says that Condition~(\ref{eq:maxrank}) holds.

The following are examples of $\tau$-tilting finite algebras.
\begin{itemize}\itemsep2mm

\item[(i)]
Representation-finite algebras;

\item[(ii)]
Dense orbit algebras (as defined in \cite{CKW15});

\item[(iii)]
The generalized species algebras $H(C,D,\Omega)$
of Dynkin type (see \cite{GLS17});

\item[(iv)]
Preprojective algebra of Dynkin type;

\item[(v)]
Local algebras.

\end{itemize}

\subsection{Tame algebras}\label{subsec:ex2}
\quad
\begin{Thm}\label{thm:tame2}
Assume that $A$ is tame, and let
$\calZ \in \irr(A)$ be generically regular.
Then the following hold:
\begin{itemize}\itemsep2mm

\item[(i)]
$e_A(\cZ,\cZ) = 0$;

\item[(ii)]
$\ov{\calZ^t} \in \irr(A)$ is generically regular for all $t \ge 1$.

\end{itemize}
\end{Thm}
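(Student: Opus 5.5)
The plan is to reduce to indecomposable components via the canonical decomposition, and then use tameness to control the generic self-extensions. Write
$$
\cZ = \ov{\cZ_1 \oplus \cdots \oplus \cZ_m}
$$
for the canonical decomposition of $\cZ$ into indecomposable components. By Proposition~\ref{prop:regularsummands1}, each $\cZ_i$ is generically regular. By Theorem~\ref{thm:CBS02}, $e_A(\cZ_i,\cZ_j)=0$ for $i\neq j$. Taking $M=\bigoplus_i M_i$ and $M'=\bigoplus_i M_i'$ generic in $\cZ$, with $M_i,M_i'$ generic in $\cZ_i$ and the pair $(M,M')$ generic in $\cZ\times\cZ$, we get
$$
\dim\Ext_A^1(M,M')=\sum_{i,j}\dim\Ext_A^1(M_i,M_j').
$$
The terms with $i\neq j$ vanish generically. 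So (i) reduces to showing $e_A(\cZ_i,\cZ_i)=0$ for each indecomposable generically regular component $\cZ_i$.

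Now let $\cZ$ be indecomposable and generically regular, with $A$ tame. Tameness implies that the indecomposables of a fixed dimension vector lie in finitely many one-parameter families up to isomorphism. Hence the generic modules of $\cZ$ form at most a one-parameter family of orbits, so $c_A(\cZ)\le 1$. Regularity gives $e_A(M)=c_A(\cZ)\le 1$ for generic $M$.

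If $c_A(\cZ)=0$, then $\cZ=\ov{\cO_M}$ with $M$ rigid, and a generic pair in $\cZ\times\cZ$ is $(M,M)$ up to isomorphism, so $e_A(\cZ,\cZ)=0$.

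If $c_A(\cZ)=1$, the generic modules are $M_\lambda$, where $\lambda$ ranges over a dense open subset of a curve. The argument in this case is as follows.
\begin{itemize}
\item By Crawley-Boevey's theory of tame algebras and generic modules, the family $(M_\lambda)_\lambda$ is obtained from a $K[x]$-$A$-bimodule, free of finite rank over $K[x]$, evaluated at the simple $K[x]$-modules $K[x]/(x-\lambda)$.
\item For all but finitely many $\lambda$ we have $\tau_A(M_\lambda)\cong M_\lambda$, since these modules lie in homogeneous tubes.
\item The Auslander-Reiten formula then gives, for generic $\lambda\neq\mu$,
$$
\Ext_A^1(M_\lambda,M_\mu)\cong D\ov{\Hom}_A(M_\mu,\tau_A(M_\lambda))\cong D\ov{\Hom}_A(M_\mu,M_\lambda).
$$
\item It remains to show that $\Hom_A(M_\mu,M_\lambda)=0$ for generic $\lambda\neq\mu$. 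For this I would use the bimodule description: generically, homomorphisms between distinct members of such a family vanish, because $\Hom$ between different simple $K[x]$-modules vanishes and the functor induced by the bimodule is, generically, a full embedding on this tube.
\end{itemize}
This yields $e_A(\cZ,\cZ)=0$, proving (i). This step, namely establishing $\tau$-invariance of the generic modules and the vanishing of $\Hom$ between distinct parameters, is the main obstacle. It is where tameness, through Crawley-Boevey's results, enters essentially. If the $\Hom$-vanishing cannot be quoted directly, I would instead try a dimension count: a nonzero generic $\Ext_A^1(M_\lambda,M_\mu)$ would produce a family of non-split extensions of dimension at least $2$ in $\md(A,2\bd)$, which contradicts the one-parameter bound, after accounting for the decomposition types of the middle terms.

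For (ii), apply Theorem~\ref{thm:CBS02} to the sequence of $t$ copies of $\cZ$. Condition (ii) there requires $e_A(\cZ,\cZ)=0$ for each pair of distinct copies, which is exactly part (i). Hence $\ov{\cZ^t}\in\irr(A)$. Its summands are all equal to $\cZ$, which is generically regular. So Proposition~\ref{prop:regularsummands1}, direction (ii)$\implies$(i), shows that $\ov{\cZ^t}$ is generically regular.
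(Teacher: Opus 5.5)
Your reductions are sound and follow the paper. The canonical decomposition, Theorem~\ref{thm:CBS02} and Proposition~\ref{prop:regularsummands1} reduce (i) to $e_A(\cZ_i,\cZ_i)=0$ for each indecomposable, generically regular summand. Tameness gives $c_A(\cZ_i)\le 1$, the case $c_A(\cZ_i)=0$ is trivial, and your derivation of (ii) from (i) matches the paper. Passing via the $\tau$-invariance of the generic modules (Crawley-Boevey's Theorem~D) and the Auslander--Reiten formula to the target $\ov{\Hom}_A(M_\mu,M_\lambda)=0$ is also legitimate.

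The gap is the next step. You aim to prove $\Hom_A(M_\mu,M_\lambda)=0$, justified by the bimodule functor being generically a full embedding. Both assertions are false, even under the hypotheses of the theorem. Let $A=K[x,y]/(x^2,y^2)$ with $\operatorname{char}K\neq 2$; it is special biserial, hence tame. Then $\md(A,2)$ is irreducible. Its generic module $M_{[a:b]}$ has basis $m,n$ with $xm=an$, $ym=bn$, $xn=yn=0$ and $ab\neq 0$. One checks:
\begin{itemize}
\item $c_A=1$;
\item $\Omega M_{[a:b]}\cong M_{[-a:b]}$, so $\tau_A(M)\cong\Omega^2 M\cong M$ since $A$ is symmetric;
\item $\dim\Ext_A^1(M,M)=1$.
\end{itemize}
So this is an indecomposable, generically regular component with $c_A=1$. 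Nevertheless, for all $\lambda,\mu$ the map $M_\mu\to M_\lambda$ sending the top generator to the socle element is a nonzero homomorphism. Hence $\Hom_A(M_\mu,M_\lambda)\neq 0$, and the functor from $K[x]$-modules is not full. This map vanishes only in the stable category: for a generic pair it factors through the projective-injective module $A$, which is why $\Ext_A^1(M_\lambda,M_\mu)=0$ still holds. A correct argument must detect the difference between $\Hom_A$ and $\ov{\Hom}_A$, and your fallback dimension count does not. It leaves open why a single middle term cannot contain a one-parameter family of submodules from the family, and it does not control decomposable middle terms at all.

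Note also that your treatment of the case $c_A(\cZ)=1$ never uses the hypothesis $e_A(\cZ)=1$, whereas in the paper this is the decisive input. The paper avoids Crawley-Boevey's bimodules at this point. It quotes \cite[Theorem~1.5]{GLFS24}, which gives the strict inequality $e_A(\cZ_i,\cZ_i)<e_A(\cZ_i)$ when $e_A(\cZ_i)=1$. Since regularity and tameness give $e_A(\cZ_i)=c_A(\cZ_i)\le 1$, this forces $e_A(\cZ_i,\cZ_i)=0$. Tameness enters only through the bound $c_A(\cZ_i)\le 1$.
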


\begin{proof}
Let
$$
\cZ = \ov{\cZ_1 \oplus \cdots \cZ_m}
$$
be the canonical decomposition of $\cZ$.
Since $A$ is tame, we have $c_A(\cZ_i) \le 1$ for all $1 \le i \le m$.
Recall that $e_A(\cZ_i,\cZ_j)=0$ for all $i \neq j$.

Assume that $\cZ$ is generically regular.
Then
$$
\sum_{i=1}^m e_A(\cZ_i) = e_A(\cZ) =
c_A(\cZ) = \sum_{i=1}^m c_A(\cZ_i).
$$
(The first and third equality follows from Lemma~\ref{lem:special1}.)
In particular, all $\cZ_i$ are generically regular, and we have
$e_A(\cZ_i) \le 1$.
By \cite[Theorem~1.5]{GLFS24} we get $e_A(\cZ_i,\cZ_i) < e_A(\cZ_i)$
if $e_A(\cZ_i) = 1$, or $e_A(\cZ_i,\cZ_i) = e_A(\cZ_i) = 0$, otherwise.
Thus in both cases, we have $e_A(\cZ_i,\cZ_i) = 0$ and therefore
$$
\cZ' := \ov{\cZ^t} = \ov{\cZ_1^t \oplus \cdots \oplus \cZ_m^t}
\in \irr(A).
$$
We get
$$
c_A(\cZ') = tc_A(\cZ) = te_A(\cZ) = e_A(\cZ').
$$
Thus $\cZ'$ is generically regular.
\end{proof}

What follows is a corresponding statement for generically $\tau$-regular components.

\begin{Thm}\label{thm:tame1}
Assume that $A$ is tame, and let
$\calZ \in \irr^\tau(A)$.
Then the following hold:
\begin{itemize}\itemsep2mm

\item[(i)]
$E_A(\cZ,\cZ) = 0$;

\item[(ii)]
$\ov{\calZ^t} \in \irr^\tau(A)$ for all $t \ge 1$.

\end{itemize}
In particular, $\cZ^t$ is $\tau$-regular for all $t \ge 1$.
\end{Thm}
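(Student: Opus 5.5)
The plan mirrors the proof of Theorem~\ref{thm:tame2}, with $E_A$ in place of $e_A$ and Theorem~\ref{thm:CLFS15} in place of Theorem~\ref{thm:CBS02}.

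First, take the canonical decomposition $\cZ = \ov{\cZ_1 \oplus \cdots \oplus \cZ_m}$. Since direct summands of $\tau$-regular modules are $\tau$-regular (Theorem~\ref{thm:BS25a}), each $\cZ_i \in \irr^\tau(A)$. Then Theorem~\ref{thm:CLFS15} gives $E_A(\cZ_i,\cZ_j) = 0$ for $i \ne j$. For each indecomposable $\tau$-regular $\cZ_i$, we have $E_A(\cZ_i) = c_A(\cZ_i) \le 1$, because $A$ is tame. Here $E_A(\cZ_i)$ denotes the generic value of $E_A$ on $\cZ_i$, which equals $c_A(\cZ_i)$ since generic modules of $\cZ_i$ are $\tau$-regular.

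The key step is to show $E_A(\cZ_i,\cZ_i) = 0$. If $E_A(\cZ_i) = 0$, then generic $M \in \cZ_i$ is $\tau$-rigid, and semicontinuity gives $E_A(\cZ_i,\cZ_i) \le E_A(M,M) = 0$. If $E_A(\cZ_i) = 1$, I would invoke the $\tau$-analogue of \cite[Theorem~1.5]{GLFS24}: for tame algebras and indecomposable $\tau$-regular components, $E_A(\cZ_i,\cZ_i) < E_A(\cZ_i)$. This is the main obstacle; I expect it to be contained in \cite{GLFS24} as well, since that paper treats both $e_A$ and $E_A$. If it is not available directly, I would argue as follows. Generic $M \in \cZ_i$ lies in a one-parameter family of pairwise non-isomorphic bricks (or at least modules), and for $M \not\cong M'$ generic one has $\Hom_A(M',\tau_A M) = 0$. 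This would use the tame structure, namely homogeneous tubes and a Crawley-Boevey style argument on generic modules, together with $\dim\Hom_A(M,\tau_A M)=1$ forcing the nonzero maps to lie on the diagonal.

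Next, I would compute $E_A(\cZ,\cZ)$. By biadditivity of $E_A(-,-)$ on generic direct sums, $E_A(\cZ,\cZ) = \sum_{i,j} E_A(\cZ_i,\cZ_j)$. Taking independent generic choices $M = \bigoplus M_i$ and $M' = \bigoplus M_i'$, each summand vanishes, so $E_A(\cZ,\cZ)=0$. This proves (i).

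For (ii), write $\cZ^t = \cZ_1^t \oplus \cdots \oplus \cZ_m^t$. All pairwise $E_A$-values between distinct copies vanish by (i), so Theorem~\ref{thm:CLFS15}, applied to the list of $tm$ components, yields $\ov{\cZ^t} \in \irr^\tau(A)$. In particular $\cZ^t$ contains a $\tau$-regular module, which gives the final assertion.
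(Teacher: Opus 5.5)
Your proposal is correct and follows essentially the paper's route: the paper takes part~(i) directly from the tame-algebra literature (\cite[Theorem~3.8]{PYK23}, compare \cite[Corollary~1.7]{GLFS24}, which rest on Crawley-Boevey's \cite[Theorem~D]{CB88} and state $E_A(\cZ,\cZ)=0$ outright), and then deduces part~(ii) from (i) via Theorem~\ref{thm:CLFS15}, exactly as you do. Your reduction to indecomposable summands, which mirrors the proof of Theorem~\ref{thm:tame2}, just unpacks what those references do internally, so your fallback sketch via homogeneous tubes is unnecessary (and as written it would not stand on its own as a proof).
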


\begin{proof}
(i):
This is part of \cite[Theorem~3.8]{PYK23}, compare also
\cite[Corollary~1.7]{GLFS24}.
The proof uses a classical deep result by Crawley-Boevey \cite[Theorem~D]{CB88} on tame algebras.

(ii)
This follows from (i) combined with
\cite[Theorem~1.3]{CLFS15}.
\end{proof}

Note that Theorem~\ref{thm:tame1} implies that all tame algebra satisfy Condition~(\ref{eq:maxrank}).

\subsection{Algebras with $\taureg(A) = \cP_{\le 1}(A)$}\label{subsec:ex3}
Let $A$ be an algebra such that
$$
\taureg(A) = \cP_{\le 1}(A).
$$
Then for each $M \in \taureg(A)$, we obviously have
$M^t \in \taureg(A)$.
Thus by Theorem~\ref{thm:intromain1.5}, $A$ satisfies Condition~(\ref{eq:maxrank}).

The following are examples of algebras $A$ satisfying
$$
\taureg(A) = \cP_{\le 1}(A).
$$
\begin{itemize}\itemsep2mm

\item[(i)]
Hereditary  algebras, i.e.\ algebras with
$\gldim(A) \le 1$.
(This is equivalent to
$\md(A) = \cP_{\le 1}(A)$.)

\item[(ii)]
The generalized species algebras
$H(C,D,\Omega)$ introduced in \cite{GLS17}.
(For these the condition $\taureg(A) = \cP_{\le 1}(A)$ follows from
\cite[Theorem~1.1]{Pf25}.)

\item[(iii)]
Local algebras.
(For these one has
$\taureg(A) = \proj(A) = \cP_{\le 1}(A)$.)

\end{itemize}

\subsection{A counterexample}\label{subsec:ex4}
Let $A = KQ / I$, where $Q$ is the quiver
\[
\xymatrix@+1ex{1 & 2 \ar@/_1.5ex/[l]_{a_1} \ar[l]|{a_2} \ar@/^1.5ex/[l]^{a_3} & 3 \ar@/_1.5ex/[l]_{b_1} \ar[l]|{b_2} \ar@/^1.5ex/[l]^{b_3}}
\]
and $I$ is generated by the elements
$a_i b_i$ with $i = 1, 2, 3$ and
\[
a_1b_2 - a_2b_1,\quad
a_1b_3 + a_3b_1,\quad
a_2b_3 - a_3b_2.
\]

The sign convention in the second set of relations is
relevant for our calculations below.
We obviously have $P(1) = S(1)$.
We have
$$
\dimv(P(1)) = (1,0,0),\quad
\dimv(P(2)) = (3,1,0),\quad
\dimv(P(3)) = (3,3,1).
$$
The modules $P(2)$ and $P(3)$ can be visualized as follows:
$$
\xymatrix@+1.5ex{
&&&&& 3 \ar[dl]_{b_1}\ar[d]|<<<<<{b_2}\ar[dr]^{b_3}
\\
&2 \ar[dl]_{a_1}\ar[d]|<<<<<{a_2}\ar[dr]^{a_3}
&&&
2\ar[d]_{a_2}\ar@/_1.6ex/[dr]|>>>>{a_3} &
2\ar[dl]|<<<<<{a_1}
\ar[dr]|<<<<<{a_3} & 2 \ar[d]^{a_2}\ar@/^1.6ex/[dl]|>>>>{a_1}
\\
1 & 1& 1
&&
1 & 1 & 1
}
$$
(Each number $i$ stands both for a composition factor $S(i)$
and for a basis vector.
The arrows indicate how the arrows
of the quiver $Q$ act on these basis vectors.
They send a basis
vector $i$ to $0$ or to $\pm(i-1)$
for some other basis vector $i-1$.
The signs are chosen such that the defining relations for $I$ are
respected.)

Note that $\gldim(A) = 2$.

\noindent
{\bf Claim 1}: $r(P(2),P(3)) = 3$.

\begin{proof}
Let $f \in \Hom_A (P(2), P(3))$.
Then
$$
f (e_2) = \lambda_1 b_1 + \lambda_2 b_2 + \lambda_3 b_3
$$
for some $(\lambda_1,\lambda_2,\lambda_3) \in K^3$,
and these scalars are uniquely determined by $f$.
Set $f^{(\lambda_1, \lambda_2, \lambda_3)} := f$.
Then
\[
f (a_1) = \lambda_2 a_1 b_2 + \lambda_3 a_1 b_3, \quad f (a_2) = \lambda_1 a_1 b_2 + \lambda_3 a_2 b_3, \qquad f (a_3) = - \lambda_1 a_1 b_3 + \lambda_2 a_2 b_3.
\]
Consequently, we obtain
$\lambda_1 f (a_1) - \lambda_2 f (a_2) + \lambda_3 f (a_3) = 0$.
\end{proof}

\noindent
{\bf Claim 2}: $r(P(2)^2,P(3)^2) = 8 > 2 \cdot r(P(2),P(3))$.

\begin{proof}
Let $g \in \Hom_A (P(2)^2, P(3)^2)$ be given by the matrix
\[
\begin{pmatrix}
f^{(1, 0, 0)} & f^{(0, 1, 0)} \\ f^{(0, 1, 0)} & f^{(0, 0, 1)}
\end{pmatrix}.
\]
An easy calculation yields that
$\rk(g) = 8 = r(P(2)^2,P(3)^2)$.
\end{proof}

Fix $(\lambda_1, \lambda_2, \lambda_3) \neq (0, 0, 0)$ and put
$M := \Coker f^{(\lambda_1, \lambda_2, \lambda_3)}$.

Combining Claims 1 and 2, we obtain the following:
\begin{itemize}\itemsep2mm

\item[(i)]
$M$ is $\tau$-regular;

\item[(ii)]
$M \oplus M$ is not $\tau$-regular (and therefore by Lemma~\ref{lem:regular1} also not regular).

\end{itemize}

It would be interesting to find larger classes of algebras and modules
with this behaviour.

Note that the homomorphism
$g\df P(2)^2 \to P(3)^2$ (as constructed in the proof of Claim 2)
is a monomorphism.
Therefore we have
$$
\rk(g) = r(P(2)^2,P(3)^2)
\text{\quad and \quad}
r(P(2)^{2t},P(3)^{2t}) = t \cdot r(P(2)^2,P(3)^2)
$$
for all $t \ge 1$.


\section{Sincere and faithful $\tau$-regular components}
\label{sec:faithful}


\subsection{Sincere components of projective dimension at most one}
\quad
\begin{Thm}\label{thm:sincere}
Let $\calZ \in \irr^\tau(A)$ be sincere.
Then the following are equivalent:
\begin{itemize}\itemsep2mm

\item[(i)]
$\pdim(\calZ) \le 1$;

\item[(ii)]
$\calZ \cap \taureg(A) = \calZ \cap \cP_{\le1}(A)$.

\end{itemize}
\end{Thm}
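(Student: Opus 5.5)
The implication (ii) $\Rightarrow$ (i) is immediate. Since $\cZ \in \irr^\tau(A)$, the set $\cZ \cap \taureg(A)$ is nonempty (indeed dense open in $\cZ$). By (ii) it consists of modules of projective dimension at most one, so $\pdim(\cZ) \le 1$.

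For (i) $\Rightarrow$ (ii), the inclusion $\cZ \cap \cP_{\le 1}(A) \subseteq \cZ \cap \taureg(A)$ holds for every component, because $\cP_{\le 1}(A) \subseteq \taureg(A)$. So the work is to show that every $\tau$-regular $M \in \cZ$ satisfies $\pdim(M) \le 1$. Fix such an $M$ and a minimal projective presentation $P_1 \xrightarrow{f} P_0 \to M \to 0$. By Theorem~\ref{thm:BS25a} we have $\rk(f) = r(P_1,P_0)$. Since the presentation is minimal, $\pdim(M) \le 1$ holds if and only if $f$ is injective, i.e.\ $\rk(f) = \dim P_1$. It therefore suffices to show $r(P_1,P_0) = \dim P_1$.

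To get this, I would use the correspondence between generic morphisms in $\Hom_A(P_1,P_0)$ and generic modules of $\cZ$ (Plamondon's construction, as recalled in \cite[Section~5]{BS25}, together with Fei's Lemma \cite[Lemma~3.2]{BS25}). Take $g \in \Hom_A(P_1,P_0)$ generic and set $N := \Coker(g)$. Then $\rk(g) = r(P_1,P_0)$, and $N$ is generic in $\cZ$; since $M$ is $\tau$-regular, $\cZ$ is the unique component containing $M$. As $\pdim(\cZ) \le 1$ and $N$ is generic, we get $\pdim(N) \le 1$. By \cite[Lemmas~2.5 and 3.6]{BS25} there is a decomposition
$$
(P_1 \xrightarrow{g} P_0) \cong (P_1' \xrightarrow{g'} P_0') \oplus (P \xrightarrow{1_P} P) \oplus (P' \to 0),
$$
where $g'$ is a minimal presentation of $N$ and $\Hom_A(P',N) = 0$.

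Here sincerity enters. All modules in $\cZ$ have the same dimension vector, so $N$ is sincere, and then $\Hom_A(P',N) = 0$ forces $P' = 0$. Since $\pdim(N) \le 1$ and $g'$ is minimal, $g'$ is injective. Hence $g = g' \oplus 1_P$ is injective, and $r(P_1,P_0) = \rk(g) = \dim P_1$. Consequently $\rk(f) = \dim P_1$, so $f$ is injective and $\pdim(M) \le 1$.

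The main point requiring care is the generic-correspondence step: a generic morphism in $\Hom_A(P_1,P_0)$, built from the minimal presentation of the particular module $M$, must have cokernel generic in the same component $\cZ$. This is exactly the input used in the proof of Lemma~\ref{lem:directsums1}, and I would cite it in the same way.
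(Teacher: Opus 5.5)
Your proposal is correct and follows essentially the same argument as the paper. You compare a generic presentation $g\colon P_1 \to P_0$ of a generic $N \in \cZ$ with the maximal-rank minimal presentation $f$ of $M$, use $\Hom_A(P',N)=0$ together with sincerity to force $P'=0$, and conclude that $g$, and hence $f$, is injective. The only difference is cosmetic: the paper starts from a generic $N\in\cZ$ and gets the presentation over $(P_1,P_0)$ from Fei's Lemma, deducing $\rk(g)=\rk(f)=r(P_1,P_0)$ from $\dimv(N)=\dimv(M)$. This avoids having to argue, as you do, that the cokernel of a generic $g$ is generic in $\cZ$.
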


\begin{proof}
(i) $\implies$ (ii):
Assume that $\pdim(\cZ) \le 1$.
We always have $\cP_{\le 1}(A) \subseteq \taureg(A)$.
Assume now $M \in \cZ \cap \taureg(A)$.
Let
$$
P_1 \xrightarrow{f}  P_0 \to M \to 0
$$
be a minimal projective presentation of $M$.
We know from Fei's Lemma \cite[Lemma~3.2]{BS25} that there is a generic $N \in \cZ$, which has
a projective presentation of the form
$$
P_1 \xrightarrow{g}  P_0 \to N \to 0.
$$
Since $\pdim(N) \le 1$, we get an isomorphism
$$
(P_1 \xrightarrow{g} P_0) \cong
(P_1' \xrightarrow{g'} P_0') \oplus
(P \xrightarrow{\cong} P) \oplus
(P' \to 0)
$$
of $2$-complexes of projective $A$-modules, where $g'$ is a minimal projective presentation of
$N$.
In particular, $g'$ is a monomorphism.
We also know that $\rk(g) = \rk(f) = r(P_1,P_0)$, since $M$ is $\tau$-regular.
This implies $\Hom_A(P',N) = 0$.
Since $\cZ$ and therefore also $N$ is sincere, this implies
$P' = 0$.
It follows that $g$ and therefore also $f$ is a monomorphism.
In particular, $\pdim(M) \le 1$.

(ii) $\implies$ (i):
Obvious.
\end{proof}

\Example:
Let $A = KQ/I$ where
$Q$ is the quiver
$$
\xymatrix{
1 & 2\ar[l]_a & 3\ar[l]_b
}
$$
and $I$ is generated by $ab$, and let
$\bd = (0,1,1)$.
Then $\calZ := \md(A,\bd)$ is a non-sincere irreducible component.
We have
$$
\calZ = \ov{\cO_{P(3)}}.
$$
Thus $\calZ$ is $\tau$-regular with $\pdim(\calZ) = 0 \le 1$.
Let $M := S(2) \oplus S(3)$.
One easily checks that $M$ is $\tau$-regular and that
$\pdim(M) = 2$.
Thus Theorem~\ref{thm:sincere} does not generalize to non-sincere
components.

\subsection{Faithful components}
Mousavand and Paquette \cite{MP23} initiated the study of faithful
irreducible components.
We recall and generalize some of their results.

\begin{Prop}[{\cite[Proposition~5.3]{MP23}}]\label{prop:faithful1}
Let $\calZ \in \irr(A)$.
Then
$$
\{ M \in \calZ \mid M \text{ is faithful} \}
$$
is an open subset of $\calZ$.
\end{Prop}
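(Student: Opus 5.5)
The plan is to show that the complement, the set of non-faithful modules in $\calZ$, is closed. This complement is described by the vanishing of certain determinants, so it is closed by upper/lower semicontinuity of rank.

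Fix the dimension vector $\bd$ with $\calZ \subseteq \md(A,\bd)$. Put $d := \sum_i d_i$, and fix a $K$-basis $a_1,\ldots,a_m$ of $A$. For $M \in \md(A,\bd)$ the action of $A$ on $K^d$ gives a linear map
$$
\rho_M\df A \to \operatorname{End}_K(K^d),\quad a \mapsto (\text{action of } a \text{ on } M).
$$
Its matrix with respect to the basis $(a_j)$ and the standard basis of $\operatorname{End}_K(K^d) \cong K^{d^2}$ has entries that are polynomial functions of the point $M$. For a quiver with relations, each $a_j$ can be taken to be a path, and the entries are then products of the matrices attached to the arrows. In general this follows from the definition of $\md(A,\bd)$ in \cite[Section~3.1]{BS25} via structure constants.

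By definition $I_M = \ker(\rho_M)$. Hence $M$ is faithful if and only if $\rk(\rho_M) = m = \dim A$. The set where $\rk(\rho_M) \ge m$ is the non-vanishing locus of the $m \times m$ minors of this polynomial matrix, so it is Zariski open in $\md(A,\bd)$. Intersecting with $\calZ$ gives the claim.

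The only point needing care is the polynomiality of $M \mapsto \rho_M(a)$ for arbitrary $a \in A$. If $\md(A,\bd)$ is realized through a presentation $A = KQ/I$ (or via generators of $A$), then every element of $A$ is a polynomial in the generators. Consequently its action matrix is a polynomial in the coordinates of $M$, which settles this point. Beyond that the argument is routine. Note also that the open set may be empty; the statement only asserts openness.
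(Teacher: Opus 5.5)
Your proof is correct. $M$ is faithful exactly when $\rk(\rho_M)=\dim A$, and since the entries of $\rho_M$ are regular functions on $\md(A,\bd)$, this is the non-vanishing locus of finitely many $m\times m$ minors, hence open. The paper gives no argument of its own and only cites \cite[Proposition~5.3]{MP23}; your reasoning (upper semicontinuity of $M \mapsto \dim I_M$) is the standard argument for this and is complete.
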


\begin{Prop}\label{prop:finiteintersection}
Let $\calZ \in \irr(A)$, and let $\cU$ be a dense open subset
of $\calZ$.
Then there exist $M_1,\ldots,M_t \in \cU$ such that
$$
I_\calZ = \bigcap_{1 \le i \le t} I_{M_i} = I_{M_1 \oplus \cdots \oplus M_t}.
$$
\end{Prop}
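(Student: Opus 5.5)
The proof is a finite-dimensionality argument: a descending chain of subspaces of $A$ must stabilize. Since $A$ is finite-dimensional over $K$, every ideal of $A$ is a $K$-subspace of $A$. For a finite set $\{M_1,\ldots,M_t\} \subseteq \cU$ we put $J(M_1,\ldots,M_t) := \bigcap_{i} I_{M_i}$. Each such ideal contains $I_\cZ$, because $I_\cZ$ is an intersection over all of $\cZ$. I will choose the finite subset so that $\dim_K J$ is minimal among all finite subsets of $\cU$; this is possible since dimensions are natural numbers bounded by $\dim A$. For this minimal choice, adding any further $N \in \cU$ cannot decrease the dimension. Hence $J \cap I_N = J$, that is, $J \subseteq I_N$ for every $N \in \cU$, and therefore $J \subseteq I_\cU$.

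Next I need $I_\cU = I_\cZ$. The equality $I_{\ov{\cU}} = I_\cU$ is recorded in the introduction, and $\ov{\cU} = \cZ$ since $\cU$ is dense in $\cZ$. To justify it directly: for fixed $a \in A$, the set $\{M \in \md(A,\bd) \mid aM = 0\}$ is Zariski closed. Indeed, the action of $a$ on $M$ is given by a matrix whose entries are polynomial in the coordinates of $M$, namely a linear combination of products of the arrow matrices, or of structure constants in the general definition of $\md(A,\bd)$. So if $a$ kills every module in $\cU$, it kills every module in $\ov{\cU} = \cZ$. Combining the two steps gives $I_\cZ \subseteq J \subseteq I_\cU = I_\cZ$, which is the first equality.

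The second equality, $\bigcap_i I_{M_i} = I_{M_1 \oplus \cdots \oplus M_t}$, holds for arbitrary modules: $a$ annihilates a direct sum exactly when it annihilates each summand, since $a$ acts diagonally on the sum.

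I do not expect a genuine obstacle. The only point needing care is the closedness of the annihilation condition, which depends on how $\md(A,\bd)$ is set up in \cite[Section~3.1]{BS25}. In any standard setup the action of $a$ is polynomial in the coordinates, so the condition $aM = 0$ is a finite set of polynomial equations.
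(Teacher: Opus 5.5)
Your proof is correct and follows the paper's argument. You identify $I_\cZ$ with $I_\cU$ by density, then use finite-dimensionality of $A$ to replace the intersection by finitely many $I_{M_i}$, and the direct-sum equality is immediate. Your additions (the minimal-dimension choice of the finite family, and the Zariski-closedness of $\{M \mid aM = 0\}$) just make explicit what the paper leaves implicit.
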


\begin{proof}
We have
$$
I_\calZ = I_\cU = \bigcap_{M \in \cU} I_M.
$$
Since the ideals $I_M$ are subspaces of the finite-dimensional
vector space $A$, there exist finitely many modules $M_1,\ldots,M_t \in \cU$
with 
$$
I_\cU = \bigcap_{1 \le i \le t} I_{M_i}.
$$
Finally, it follows from the definitions that
\[
\bigcap_{1 \le i \le t} I_{M_i} = I_{M_1 \oplus \cdots \oplus M_t}.
\qedhere
\]
\end{proof}

\begin{Cor}[{\cite[Proposition~5.4]{MP23}}]\label{cor:faithful2}
Let $\calZ \in \irr(A)$ be faithful. Then there exist generic modules
$M_1,\ldots,M_r$ in $\calZ$ such that
$M_1 \oplus \cdots \oplus M_t$ is faithful.
\end{Cor}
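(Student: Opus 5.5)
The plan is to combine Proposition~\ref{prop:faithful1} with Proposition~\ref{prop:finiteintersection}. The statement is Corollary~\ref{cor:faithful2}: if $\cZ$ is faithful, there are generic $M_1,\ldots,M_t$ in $\cZ$ with $M_1 \oplus \cdots \oplus M_t$ faithful. (The indices $r$ and $t$ in the statement should coincide.)

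First I fix a dense open subset $\cU \subseteq \cZ$ encoding whatever genericity is required. This means intersecting the finitely many dense open subsets on which the relevant semicontinuous functions attain their generic values, for example the $\tau$-regular locus when $\cZ \in \irr^\tau(A)$. Since $\cZ$ is irreducible, any finite intersection of dense open subsets is again dense open, so $\cU$ is dense open in $\cZ$.

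Next I apply Proposition~\ref{prop:finiteintersection} to this $\cU$. It gives finitely many $M_1,\ldots,M_t \in \cU$ with
$$
I_{M_1 \oplus \cdots \oplus M_t} = \bigcap_{i=1}^t I_{M_i} = I_\cZ.
$$
The input here is that $I_\cZ = I_{\ov{\cU}} = I_\cU$, which holds because $\ov{\cU} = \cZ$. Since $\cZ$ is faithful, $I_\cZ = 0$, so $I_{M_1 \oplus \cdots \oplus M_t} = 0$, which says exactly that $M_1 \oplus \cdots \oplus M_t$ is faithful. Each $M_i$ lies in $\cU$ and is therefore generic in $\cZ$.

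There is no real obstacle. The only points to check are the equality $I_{\ov{\cU}} = I_\cU$, which follows because the condition $aM = 0$ is closed in $M$ for fixed $a$, and that "generic" is read relative to a fixed dense open set, as in the paper's conventions.
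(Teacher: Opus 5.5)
Your proof is correct and matches the paper, whose proof of this corollary is just "combine Proposition~\ref{prop:finiteintersection} with Proposition~\ref{prop:faithful1}"; choosing the dense open $\cU$ first and then using $I_\cZ = I_\cU = 0$ is exactly the needed step. Your write-up never actually uses Proposition~\ref{prop:faithful1}, and it is not needed for the statement as posed. The openness of the faithful locus only matters if you want the direct sum to be faithful for generic tuples, as in the proof of Theorem~\ref{thm:faithful4}.
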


\begin{proof}
Combine Proposition~\ref{prop:finiteintersection} with Proposition~\ref{prop:faithful1}.
\end{proof}

\begin{Lem}
For $\cZ,\cZ_1,\ldots,\cZ_m \in \irr(A)$ with
$$
\cZ = \ov{\cZ_1 \oplus \cdots \oplus \cZ_m}
$$
we have
$$
I_\cZ = \bigcap_{1 \le i \le m} I_{\cZ_i}.
$$
\end{Lem}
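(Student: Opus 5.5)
We prove the two inclusions separately, using only the facts that $I_{\cU} = I_{\ov{\cU}}$ for constructible $\cU$ and that annihilators are constant on isomorphism classes. Set $\cU := \cZ_1 \oplus \cdots \oplus \cZ_m$, the image of the morphism $\eta$ from Section~2.2. Then $\cZ = \ov{\cU}$, so $I_\cZ = I_\cU$, and it suffices to show
$$
I_\cU = \bigcap_{1 \le i \le m} I_{\cZ_i}.
$$

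For the inclusion $\supseteq$, we look at a typical element of $\cU$. Such an element has the form $g.(M_1 \oplus \cdots \oplus M_m)$ with $M_i \in \cZ_i$, and it is isomorphic to $M_1 \oplus \cdots \oplus M_m$. By the observation in the proof of Proposition~\ref{prop:finiteintersection},
$$
I_{M_1 \oplus \cdots \oplus M_m} = \bigcap_{i} I_{M_i}.
$$
Now take $a \in \bigcap_i I_{\cZ_i}$. Then $a M_i = 0$ for every $i$, so $a$ annihilates every module in $\cU$, that is, $a \in I_\cU$.

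For the inclusion $\subseteq$, fix $j$ and let $a \in I_\cU$. We show that $a \in I_{M_j}$ for an arbitrary $M_j \in \cZ_j$. To do this, choose any $M_i \in \cZ_i$ for each $i \ne j$; this is possible because components are nonempty. Then $M_1 \oplus \cdots \oplus M_m \in \cU$, so $a$ annihilates this direct sum and in particular its summand $M_j$. Since $M_j \in \cZ_j$ was arbitrary, $a \in I_{\cZ_j}$. As $j$ was arbitrary, $a \in \bigcap_j I_{\cZ_j}$.

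None of these steps presents a real obstacle. The only point requiring care is the passage to the closure, which is handled by $I_{\ov{\cU}} = I_\cU$ as stated earlier. Note that the hypothesis that $\cZ$ is an irreducible component is never used; the argument holds for the closure of any direct sum of components.
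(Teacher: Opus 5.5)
Your proof is correct and takes the same route as the paper: the paper notes $I_\cZ = I_{\cZ_1 \oplus \cdots \oplus \cZ_m}$ (via $I_{\ov{\cU}} = I_\cU$) and treats the identification with $\bigcap_i I_{\cZ_i}$ as obvious, while you write out the two inclusions it leaves implicit. Your remark that the hypothesis $\cZ \in \irr(A)$ is never used is also correct.
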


\begin{proof}
This is obvious, since
\[
I_\cZ = I_{\cZ_1 \oplus \cdots \oplus \cZ_m}.
\qedhere
\]
\end{proof}

\Remark:
It should be interesting to study the factor algebras
$A/I_\cZ$ where $\cZ \in \irr^\tau(A)$. 
For $A$ a preprojective algebra, some results in this direction
can be found in \cite{AIRT12}.

\begin{Prop}[{\cite[Proposition~5.5]{MP23}}]\label{prop:faithful3}
Let $\calZ \in \irr^\tau(A)$ be faithful.
If $\calZ$ is strongly faithful, or if $E_A(\calZ,\calZ) = 0$,
then $\pdim(\cZ) \le 1$.
\end{Prop}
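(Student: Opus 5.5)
The plan is to use the characterization from \cite{BS25}: for a $\tau$-regular $M$ with minimal projective presentation $P_1 \xrightarrow{f} P_0 \to M \to 0$ we have $\rk(f) = r(P_1,P_0)$, and $\pdim(M) \le 1$ holds exactly when $f$ is a monomorphism. In each of the two cases we exhibit a single module in $\cZ$ of projective dimension at most one. This suffices, since $\pdim(\cZ)$ is the minimum over $\cZ$.

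\emph{Case 1: $\cZ$ strongly faithful.} By Proposition~\ref{prop:faithful1} the faithful modules form a dense open subset of $\cZ$. The $\tau$-regular modules also form a dense open subset, so we can pick $M \in \cZ$ which is both faithful and $\tau$-regular. Let $P_1 \xrightarrow{f} P_0 \xrightarrow{\pi} M \to 0$ be a minimal presentation, so $\rk(f) = r(P_1,P_0)$, and let $K = \operatorname{Ker}(f)$.

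The key step is the first-order maximality condition. For every $h \in \Hom_A(P_1,P_0)$ we have $h(K) \subseteq \Ima(f)$. Indeed, otherwise $\rk(f+\varepsilon h) > \rk(f)$ for generic $\varepsilon \in K$; to see this, pick $x \in K$ with $h(x) \notin \Ima(f)$ and compare the images of $f + \varepsilon h$ on a complement of $K$ together with $x$. Composing with $\pi$, and using that every map $P_1 \to M$ lifts through $\pi$, we get that every $g \in \Hom_A(P_1,M)$ vanishes on $K$.

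Now write $P_1 = \bigoplus_k Ae_{i_k}$ and take $x = (a_k e_{i_k})_k \in K$. A homomorphism $P_1 \to M$ is given by an arbitrary choice of elements $m_k \in e_{i_k}M$, and it sends $x$ to $\sum_k a_k m_k$. Choosing all $m_j = 0$ except one coordinate $k$, we get $a_k e_{i_k} M = 0$. Faithfulness of $M$ then forces $a_k e_{i_k} = 0$ for every $k$, so $x = 0$. Hence $f$ is injective, and $\pdim(M) \le 1$.

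\emph{Case 2: $E_A(\cZ,\cZ) = 0$.} Here we use the criterion $\pdim(N) \le 1 \iff \Hom_A(DA, \tau_A N) = 0$, together with the standard fact that a faithful module $M$ has $DA$ as a quotient of some $M^s$. By Corollary~\ref{cor:faithful2} there are generic $M_1,\ldots,M_t \in \cZ$ with $M := M_1 \oplus \cdots \oplus M_t$ faithful.

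The subset of $\cZ \times \cZ$ on which $E_A(-,-)$ attains its minimum $0$ is open and dense. Having fixed $M_1,\ldots,M_t$, we choose $N \in \cZ$ generic, in particular $\tau$-regular, such that $\Hom_A(M_i, \tau_A N) = 0$ for all $i$. This is a finite intersection of dense open conditions on $N$, once we also ensure each $M_i$ was chosen generic enough that the fibre condition is nonempty. A surjection $M^s \to DA$ then gives an injection
$$
\Hom_A(DA,\tau_A N) \hookrightarrow \Hom_A(M^s,\tau_A N) = 0,
$$
so $\pdim(N) \le 1$.

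The main obstacle I expect is the genericity bookkeeping in Case 2. One must make sure that the pairs $(M_i,N)$ can simultaneously be taken in the open set where $E_A$ vanishes. I would handle this by choosing $N$ last, in the intersection of the open sets $\{N : E_A(M_i,N) = 0\}$. Each of these is nonempty because $M_i$ can be chosen with nonempty fibre in the dense open subset of $\cZ \times \cZ$, by a dimension argument on the projection to the first factor.
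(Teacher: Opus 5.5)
Your proof is correct, but it takes a different route from the argument the paper relies on. The paper does not reprove this proposition. The Mousavand--Paquette argument it refers to, and reuses in the proof of Theorem~\ref{thm:faithful4}, goes as follows. Take a faithful $\tau$-regular $N \in \cZ$ and an epimorphism $N^r \to D(A_A)$. The induced injection $\Hom_A(D(A_A),\tau_A N) \to \Hom_A(N^r,\tau_A N)$ has image inside $\cI(N^r,\tau_A N)$. That space vanishes, because $\tau$-regularity gives $e_A(N)=E_A(N)$, and hence $\ov{\Hom}_A(N,\tau_A N)=\Hom_A(N,\tau_A N)$ by the Auslander--Reiten formula. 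Within the paper, the case $E_A(\cZ,\cZ)=0$ then follows by passing to $\ov{\cZ^t}\in\irr^\tau(A)$ (Theorem~\ref{thm:CLFS15}), which contains a faithful module by Corollary~\ref{cor:faithful2}.

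Your Case~1 instead uses the rank characterization of Theorem~\ref{thm:BS25a} plus a first-order perturbation argument. Maximality of $\rk(f)$ on the linear space $\Hom_A(P_1,P_0)$ forces $h(\ker f)\subseteq \Ima(f)$ for all $h$. Hence every map $P_1\to M$ kills $\ker f$, i.e.\ $\ker f\subseteq I_M P_1$, and faithfulness gives $\ker f=0$. This leans on the deep result of \cite{BS25}, whereas the paper's route needs only Voigt's lemma and the Auslander--Reiten formula. In exchange, the rest is pure linear algebra, and it gives the sharper statement $\ker f\subseteq I_M P_1$ for every $\tau$-regular $M$ with minimal presentation $f$, in the spirit of the proof of Theorem~\ref{thm:sincere}.

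Your Case~2 avoids Theorem~\ref{thm:CLFS15} and $\tau$-regularity altogether; the $\tau$-regularity of $N$ you mention is never used. You kill $\Hom_A(M^s,\tau_A N)$ directly by choosing $N$ generic relative to the fixed $M_1,\ldots,M_t$. The cost is the genericity bookkeeping: the zero locus of $E_A$ on $\cZ\times\cZ$ must contain a dense open set, and its projection must contain a dense open set from which the $M_i$ are drawn. You handle this adequately.

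Two minor slips. With the paper's convention $E_A(M,N)=\dim\Hom_A(N,\tau_A M)$, the condition you need is $E_A(N,M_i)=0$, not $E_A(M_i,N)=0$; this is harmless, since $E_A(\cZ,\cZ)=0$ concerns all ordered pairs. Also, you use $K$ both for the field and for $\ker f$.
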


Proposition~\ref{prop:faithful3} gives a criterium when a faithful
$\tau$-regular component is of projective dimension at most one.
The following theorem gives a necessary and sufficient condition for this.

\begin{Thm}\label{thm:faithful4}
Let $\calZ \in \irr^\tau(A)$ be faithful.
Then the following are equivalent:
\begin{itemize}\itemsep2mm

\item[(i)]
$\pdim(\calZ) \le 1$;

\item[(ii)]
$\calZ^t$ is $\tau_A$-regular for all $t \ge 1$.

\end{itemize}
\end{Thm}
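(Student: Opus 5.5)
The direction (i) $\implies$ (ii) is Lemma~\ref{lem:directsums2}, so the work lies in (ii) $\implies$ (i). The plan is to use faithfulness to move to a single $\tau$-regular module that is faithful, and then to argue as in Theorem~\ref{thm:sincere}, with sincerity replaced by faithfulness.

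First I would apply Proposition~\ref{prop:finiteintersection} to the dense open subset $\cU = \cZ \cap \taureg(A)$. This yields $M_1,\ldots,M_t \in \cU$ such that $N := M_1 \oplus \cdots \oplus M_t$ is faithful. By (ii) the set $\cZ^t$ is $\tau$-regular, so there is a unique $\cZ_t \in \irr^\tau(A)$ with $\cZ^t \subseteq \cZ_t$. The set of faithful modules in $\cZ_t$ is open (Proposition~\ref{prop:faithful1}). It is also nonempty, since it contains $N \in \cZ^t \subseteq \cZ_t$. Hence $\cZ_t$ is strongly faithful. Proposition~\ref{prop:faithful3} then gives $\pdim(\cZ_t) \le 1$.

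Next I would transfer this back to $\cZ$. The intersection of the dense open subsets (faithful modules, $\tau$-regular modules, modules of projective dimension at most one) of $\cZ_t$ is dense open in $\cZ_t$, and I want it to meet $\cZ^t$. This is the main obstacle, because $\cZ^t$ need not be dense in $\cZ_t$.

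To get around this, I would work with presentations instead of with $\cZ_t$. Take $M \in \cZ$ generic. By Fei's Lemma and the proof of Lemma~\ref{lem:directsums1}, $M$ has a presentation $P_1 \xrightarrow{f} P_0$ of maximal rank, and each generic $M_i$ has a presentation $f_i \colon P_1 \to P_0$ of the same shape. Assumption (ii), through Lemma~\ref{lem:directsums3} and \cite[Lemma~3.6]{BS25}, shows that $f_1 \oplus \cdots \oplus f_t$ has maximal rank $r(P_1^t,P_0^t) = t \cdot r(P_1,P_0)$.

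Now choose a generic $h \in \Hom_A(P_1^t,P_0^t)$. It also has rank $r(P_1^t,P_0^t)$, and its cokernel $L$ is generic in $\cZ_t$. So $L$ is faithful and $\pdim(L) \le 1$. As in the proof of Theorem~\ref{thm:sincere}, decompose $h$ as a minimal part plus $(P \xrightarrow{1} P) \oplus (P' \to 0)$ with $\Hom_A(P',L)=0$. Since $L$ is faithful it is sincere, so $P'=0$, and since $\pdim(L) \le 1$ the minimal part is injective. Hence $h$ is a monomorphism, and
$$
\dim P_1^t = \rk(h) = r(P_1^t,P_0^t) = t \cdot \rk(f).
$$
Therefore $\rk(f) = \dim P_1$, that is, $f$ is injective. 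Decomposing $f$ the same way, with $\Hom_A(P'',M)=0$ for the summand $P'' \to 0$, shows that the minimal presentation of $M$ is injective. So $\pdim(M) \le 1$, and hence $\pdim(\cZ) \le 1$.

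The step to check carefully is the claim that a generic $h$ gives a module generic in $\cZ_t$. This is the Plamondon correspondence of \cite[Section~5]{BS25}, as invoked in Lemma~\ref{lem:directsums1}: $\cZ_t$ is the $\tau$-regular component attached to generic morphisms in $\Hom_A(P_1^t,P_0^t)$, because $f_1 \oplus \cdots \oplus f_t$ attains the maximal rank there.
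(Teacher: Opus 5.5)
Your proof is correct, but it takes a longer route than the paper.

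The paper never passes to $\cZ_t$. Since $\cZ^t$ is irreducible, and both the faithful modules and (by (ii)) the $\tau$-regular modules form nonempty open subsets of it, some $X\in\cZ^t$ is both faithful and $\tau$-regular. The Mousavand--Paquette argument is then run on $X$ itself. Faithfulness gives an epimorphism $X^r\to D(A_A)$. $\tau$-regularity gives $e_A(X)=E_A(X)$, hence $\cI(X,\tau_A(X))=0$ by the Auslander--Reiten formula. So $\Hom_A(D(A_A),\tau_A(X))=0$, i.e.\ $\pdim(X)\le 1$, and the summands of $X$ lying in $\cZ$ inherit this.

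You use Proposition~\ref{prop:faithful3} only as a statement about components. That is what forces you to transfer $\pdim\le 1$ from the generic module of $\cZ_t$ back to $\cZ^t$. Your rank argument with a generic $h\in\Hom_A(P_1^t,P_0^t)$ does this correctly. The genericity of $\Coker(h)$ in $\cZ_t$ holds: by Fei's Lemma, cokernels of maximal-rank maps $P_1^t\to P_0^t$ fill a neighbourhood of the $\tau$-regular module $M^t$, which lies only in $\cZ_t$. However, this transfer is essentially the proof of Theorem~\ref{thm:sincere}. You could simply apply that theorem to the sincere component $\cZ_t$ and any $\tau$-regular module of $\cZ^t$, as the paper does in Theorem~\ref{thm:limit2}(ii).

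Two smaller remarks.
\begin{itemize}
\item For the $M_i$ of your first step to admit presentations of shape $(P_1,P_0)$, take the dense open set in Proposition~\ref{prop:finiteintersection} smaller than $\cZ\cap\taureg(A)$. Alternatively, identify $\cZ_t$ via $f^{\oplus t}$ and $M^t$ instead.
\item Once that is done, your $N=M_1\oplus\cdots\oplus M_t$ is already faithful and, by your own rank computation, $\tau$-regular. The module-level argument above would then finish the proof at that point.
\end{itemize}

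Your route has two merits: it uses Proposition~\ref{prop:faithful3} only in its stated form, and it makes explicit where the additivity $r(P_1^t,P_0^t)=t\cdot r(P_1,P_0)$ enters. The paper's route is shorter and needs none of the presentation machinery (Fei's Lemma, the Plamondon correspondence, \cite[Lemma~3.6]{BS25}).
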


\begin{proof}
(i) $\implies$ (ii):
This is part of Lemma~\ref{lem:directsums2}.

(ii) $\implies$ (i):
Since $\cZ$ is faithful, we can apply Corollary~\ref{cor:faithful2}
and get generic modules $M_1,\ldots,M_t$ in $\cZ$ such that
$M := M_1 \oplus \cdots \oplus M_t$ is faithful.
We have $M \in \cZ^t$.
The faithful modules form a non-empty open subset of
$\cZ^t$, and the same holds for the $\tau$-regular modules.
Thus there exists a faithful $\tau$-regular module $N \in \cZ^t$.
Therefore there is some $r \ge 1$ and an epimorphism
$f\df N^r \to D(A_A)$.
Applying $\Hom_A(-,\tau_A(N))$ yields an exact sequence
$$
0 \to \Hom_A(D(A_A),\tau_A(N)) \xrightarrow{h}
\Hom_A(N^r,\tau_A(N)).
$$
where $h := \Hom_A(f,\tau_A(N))$.
The image of $h$ consists of morphisms factoring through the
injective module $D(A_A)$.

The $\tau$-regularity of $N$ implies that $\Ext_A^1(N,N) \cong
\Hom_A(N,\tau_A(N))$, and the Auslander-Reiten formulas say
that $\Ext_A^1(N,N) \cong \ov{\Hom}_A(N,\tau_A(N))$.
Thus we have
$$
\ov{\Hom}_A(N^r,\tau_A(N)) = \Hom_A(N^r,\tau_A(N)).
$$
This implies that the image of $h$ is $0$.
Thus we obtain
$\Hom_A(D(A_A),\tau_A(N)) = 0$, and therefore $\pdim(N) \le 1$.
(Note that we use here the same argument as in Mousavand and Paquette's proof of
Proposition~\ref{prop:faithful3}.)
We have $N = N_1 \oplus \cdots \oplus N_t$ with $N_i \in \cZ$
for $1 \le i \le t$.
We get $\pdim(N_i) \le 1$  for all $i$ and therefore
$\pdim(\cZ) \le 1$.
\end{proof}


\section{Reduction of $\tau$-regular modules and components}
\label{sec:reduction}


\subsection{Reduction of modules}
Let $I$ be an ideal of $A$, and let $B = A/I$.
Then we can identify $\md(B)$ with the full subcategory of all
$M \in \md(A)$ with $IM = 0$.
When we consider such an $A$-module $M$ as a $B$-module,
we speak of the \emph{reduction} of $M$ to $B$ (or to a $B$-module).
Similarly, if $\cZ \in \irr(A)$ with $I \cZ = 0$, then we can consider
$\cZ$ as a component in $\irr(B)$, and we speak
again of a \emph{reduction} of $\cZ$ to $B$ (or to a component
in $\irr(B)$).

\begin{Lem}\label{lem:reduction5}
As above, let $B = A/I$, and let $M \in \md(A)$ with $IM = 0$.
Then
$$
c_B(M) \le c_A(M), \quad
e_B(M) \le e_A(M), \quad
E_B(M) \le E_A(M).
$$
\end{Lem}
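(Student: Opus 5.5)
We prove the three inequalities separately, each by comparing the relevant $A$-object with its $B$-counterpart.

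\emph{The invariant $c$.} Let $M \in \md(A,\bd)$ with $IM = 0$. The variety $\md(B,\bd)$ is the closed subset of $\md(A,\bd)$ of modules annihilated by $I$. The orbit $\cO_M$ is the same in both varieties, since the $\GL_\bd$-action is the same and isomorphism of $B$-modules agrees with isomorphism of $A$-modules. Each irreducible component of $\md(B,\bd)$ containing $M$ is an irreducible closed subset of $\md(A,\bd)$. It therefore lies in some irreducible component of $\md(A,\bd)$, and that component also contains $M$. Hence $\md_M(B,\bd) \subseteq \md_M(A,\bd)$, so $\dim \md_M(B,\bd) \le \dim \md_M(A,\bd)$. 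Subtracting $\dim \cO_M$ gives $c_B(M) \le c_A(M)$.

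\emph{The invariant $e$.} The category $\md(B)$ is a full subcategory of $\md(A)$ closed under submodules and quotients, but not under extensions. Every short exact sequence $0 \to M \to E \to M \to 0$ of $B$-modules is also one of $A$-modules. Two such sequences are equivalent over $B$ if and only if they are equivalent over $A$, because morphisms between $B$-modules are the same as $A$-morphisms. So the map $\Ext^1_B(M,M) \to \Ext^1_A(M,M)$ is injective; it is the standard injectivity of $\Ext^1$ for a full subcategory closed under isomorphisms. This gives $e_B(M) \le e_A(M)$.

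\emph{The invariant $E$.} This is the main step: we must compare $\Hom_B(M,\tau_B M)$ with $\Hom_A(M,\tau_A M)$, and the two translates differ. We use the known description (see \cite[Proposition~4.2]{AR77} or \cite[VIII.5]{ARS97}) of $\tau_B M$ as the largest submodule of $\tau_A M$ annihilated by $I$:
$$
\tau_B(M) \cong \{ x \in \tau_A(M) \mid Ix = 0 \}.
$$
To justify this we would proceed as follows. Start from a minimal projective presentation $P_1 \to P_0 \to M \to 0$ over $A$. Tensoring with $B$ gives the minimal projective presentation $P_1/IP_1 \to P_0/IP_0 \to M \to 0$ over $B$, since $M/IM = M$. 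Applying the Nakayama functor, which for $B$ is $\Hom_A(B,-)$ composed with $\nu_A$, identifies $\tau_B M$ with the kernel computed inside $\Hom_A(B,\nu_A P_1)$. That kernel is the $I$-annihilated part of $\tau_A M$.

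Given this description, every $B$-homomorphism $M \to \tau_B M$ composes with the inclusion $\tau_B M \hookrightarrow \tau_A M$ to give an $A$-homomorphism. The resulting map $\Hom_B(M,\tau_B M) \to \Hom_A(M,\tau_A M)$ is injective. It is in fact bijective, because the image of any $A$-map from $M$ is annihilated by $I$. Either way, $E_B(M) \le E_A(M)$.
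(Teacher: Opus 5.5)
Your arguments for $c$ and $e$ are fine and agree with the paper's. For $e$, you should add that $\Ext^1_B(M,M)\to\Ext^1_A(M,M)$ is $K$-linear, being induced by the exact restriction functor; injectivity as a map of sets alone does not bound dimensions.

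The $E$-part, however, rests on a false claim. The step ``tensoring with $B$ gives the minimal projective presentation'' fails. With it fall the description $\tau_B(M)\cong\{x\in\tau_A(M)\mid Ix=0\}$ and your assertion that $\Hom_B(M,\tau_B(M))\to\Hom_A(M,\tau_A(M))$ is bijective. Take $A=K[x]/(x^2)$, $I=(x)$, $B=K$ and $M=K$. The minimal presentation $A\xrightarrow{\cdot x}A\to M\to 0$ reduces to $K\xrightarrow{0}K$, which is not minimal. Here $\tau_B(M)=0$, whereas $\tau_A(M)\cong M$ is annihilated by $I$. So $E_B(M)=0<1=E_A(M)$, and the map cannot be bijective.

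Your computation still proves what is needed once you drop minimality. Let $\ov{f}\colon P_1/IP_1\to P_0/IP_0$ be the reduced presentation. Since $\nu_B(P/IP)\cong\Hom_A(B,\nu_A(P))$, you correctly get $\ker\nu_B(\ov{f})=\Hom_A(B,\tau_A(M))$. Now decompose $\ov{f}\cong f_B\oplus(P''\to 0)$, where $f_B$ is a minimal $B$-presentation of $M$; possible identity summands do not affect the kernel. Then $\Hom_A(B,\tau_A(M))\cong\tau_B(M)\oplus\nu_B(P'')$. So $\tau_B(M)$ is isomorphic to a submodule of $\tau_A(M)$. This is precisely the fact from \cite[Proposition~4.2]{AR77} on which the paper's short proof relies. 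Composing with this embedding gives an injective linear map $\Hom_B(M,\tau_B(M))\to\Hom_A(M,\tau_A(M))$, hence $E_B(M)\le E_A(M)$.

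Your observation that every $A$-map out of $M$ lands in the $I$-annihilated part then yields the exact formula $E_A(M)=E_B(M)+\dim\Hom_B(P'',M)$. Thus the inequality is strict exactly when $\Hom_B(P'',M)\neq 0$. With ``bijective'' replaced by ``injective'' and the minimality claim corrected this way, your proof is the paper's.
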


\begin{proof}
We have $M \in \md(A,\bd)$ with $\bd = \dimv(M)$.
We can interpret $\md(B,\bd)$ as a closed subset of
$\md(A,\bd)$.
This implies $c_B(M) \le c_A(M)$.

Note that $\md(B)$ is an exact subcategory of $\md(A)$.
It follows that $e_B(M) \le e_A(M)$.

By \cite[Proposition~4.2]{AR77} we know that $\tau_B(M)$ can be seen
as a submodule of $\tau_A(M)$.
This obviously implies
$$
\Hom_B(M,\tau_B(M)) = \Hom_A(M,\tau_B(M)) \subseteq
\Hom_A(M,\tau_A(M))
$$
and therefore $E_B(M) \le E_A(M)$.
\end{proof}

\subsection{Reduction of rigid and $\tau$-rigid modules}

\begin{Lem}\label{lem:reduction1}
As above, let $B = A/I$, and let $M \in \md(A)$ with $IM = 0$.
If
$M$ is rigid (resp. $\tau_A$-rigid) as an $A$-module,
then  $M$ is rigid (resp. $\tau_B$-rigid) as a $B$-module.
\end{Lem}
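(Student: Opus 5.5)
This is an immediate consequence of Lemma~\ref{lem:reduction5}, and I would deduce it from there rather than argue from scratch. The two cases can be handled in parallel, since the lemma compares exactly the invariants that define rigidity and $\tau$-rigidity.

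For the rigid case, Lemma~\ref{lem:reduction5} gives $e_B(M) \le e_A(M)$. If $M$ is rigid over $A$, then $e_A(M) = 0$, so $e_B(M) = 0$, which means $\Ext_B^1(M,M) = 0$. The underlying reason is that $\md(B)$ is a full subcategory of $\md(A)$ closed under submodules and factor modules. Hence every short exact sequence $0 \to M \to E \to M \to 0$ in $\md(B)$ is also one in $\md(A$). A sequence that splits in $\md(A)$ splits in $\md(B)$, because the splitting maps are $A$-linear maps between $B$-modules and are therefore $B$-linear. So the natural map $\Ext_B^1(M,M) \to \Ext_A^1(M,M)$ is injective.

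For the $\tau$-rigid case, Lemma~\ref{lem:reduction5} gives $E_B(M) \le E_A(M)$, so $E_A(M) = 0$ forces $E_B(M) = 0$. The content here is the input from \cite[Proposition~4.2]{AR77} that $\tau_B(M)$ embeds as a submodule of $\tau_A(M)$. This yields
$$
\Hom_B(M,\tau_B(M)) = \Hom_A(M,\tau_B(M)) \subseteq \Hom_A(M,\tau_A(M)) = 0.
$$
If one wanted a self-contained argument, this embedding is the only nontrivial step. It comes from viewing $\tau_B(M)$ as the $I$-annihilated part of $\tau_A(M)$ via $\tau_B M \cong \Hom_A(B,\tau_A M)$, or from comparing minimal projective presentations and applying the Nakayama functor. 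Since the paper already quotes that result, I would simply cite it.
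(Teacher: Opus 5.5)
Your proof is correct and follows the paper's own argument. The rigid case uses that the full embedding $\md(B)\subseteq\md(A)$ is exact, and the $\tau$-rigid case comes directly from $E_B(M)\le E_A(M)$ in Lemma~\ref{lem:reduction5}, which rests on the embedding $\tau_B(M)\subseteq\tau_A(M)$ from \cite[Proposition~4.2]{AR77}.

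One caution about your parenthetical aside: the formula $\tau_B(M)\cong\Hom_A(B,\tau_A(M))$ holds only up to injective $B$-summands. For example, take $A=K[x]/(x^2)$, $I=(x)$ and $M=S$; then $\Hom_A(B,\tau_A(S))\cong S$ but $\tau_B(S)=0$. The embedding you actually need still follows, since $\tau_B(M)$ is a direct summand of $\Hom_A(B,\tau_A(M))\subseteq\tau_A(M)$.
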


\begin{proof}
For rigid modules the claim is straightforward, since
$\md(B)$ is an exact subcategory of $\md(A)$.
For $\tau$-rigid modules, the claim
follows directly from
Proposition~\ref{lem:reduction5}.
\end{proof}

The previous lemma does not generalize to $\tau$-regular modules,
as Example~(i) in Section~\ref{subsec:reduction1} shows.

The following lemma seems to be folklore.

\begin{Lem}[{\cite[VIII, Lemma~5.1]{ASS06}}]\label{lem:reduction2}
Let $M \in \md(A)$ be faithful and $\tau$-rigid.
Then
$$
\pdim_A(M) \le 1.
$$
\end{Lem}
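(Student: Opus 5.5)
The plan is to use the same argument as in the proof of Theorem~\ref{thm:faithful4}, in its simplest form. First, since $M$ is faithful, $D(A_A)$ is a quotient of some power of $M$. Indeed, $A$ embeds into $M^r$ for suitable $r$: choose a $K$-basis $m_1,\ldots,m_r$ of $M$ and send $a \mapsto (am_1,\ldots,am_r)$. This map is injective because $I_M = 0$. Applying the duality to the right module version, or arguing directly with $D$, gives an epimorphism $f\df M^r \to D(A_A)$ for some $r \ge 1$. (Dually one can phrase this as: $DM$ is a faithful right module, so $A_A$ embeds in $(DM)^r$.)

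Next, apply $\Hom_A(-,\tau_A(M))$ to $f$. Since $f$ is an epimorphism, the induced map
$$
h := \Hom_A(f,\tau_A(M))\df \Hom_A(D(A_A),\tau_A(M)) \to \Hom_A(M^r,\tau_A(M))
$$
is injective. Since $M$ is $\tau$-rigid, $\Hom_A(M,\tau_A(M)) = 0$, and hence $\Hom_A(M^r,\tau_A(M)) = 0$. It follows that $\Hom_A(D(A_A),\tau_A(M)) = 0$. In this case we do not even need the Auslander--Reiten formula step about morphisms factoring through injectives, which was needed in the $\tau$-regular situation of Theorem~\ref{thm:faithful4}.

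Finally, invoke the standard criterion (see \cite[Section~IV.4]{ARS97} or \cite{ASS06}): $\pdim_A(M) \le 1$ if and only if $\Hom_A(D(A_A),\tau_A(M)) = 0$. This gives the claim.

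The only step that requires any care is producing the epimorphism $M^r \to D(A_A)$ from faithfulness, that is, getting the sidedness right. This is routine: faithfulness of $M$ gives a monomorphism ${}_AA \to M^r$, and, applying the same argument to $DM$ as a right module, we obtain $A_A \hookrightarrow (DM)^r$. Dualizing yields $M^r \twoheadrightarrow D(A_A)$. The cited criterion for projective dimension at most one is classical, so there is no real obstacle.
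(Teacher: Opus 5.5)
Your proof is correct and follows essentially the same approach as the paper. The paper gives no proof of its own here and just cites \cite[VIII, Lemma~5.1]{ASS06}, but it runs exactly your argument in the proof of Theorem~\ref{thm:faithful4}: an epimorphism $M^r \to D(A_A)$ from faithfulness, left exactness of $\Hom_A(-,\tau_A(M))$, and the criterion $\pdim_A(M) \le 1 \iff \Hom_A(D(A_A),\tau_A(M)) = 0$. As you note, the extra step about morphisms factoring through injectives is needed there only because $N$ is merely $\tau$-regular.
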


\begin{Cor}\label{cor:reduction3}
Let $M \in \md(A)$ be $\tau$-rigid, and let $B = A/I_M$.
Then
$$
\pdim_B(M) \le 1.
$$
\end{Cor}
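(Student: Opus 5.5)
The plan is to combine Lemma~\ref{lem:reduction1} with Lemma~\ref{lem:reduction2}, applied over the factor algebra $B = A/I_M$ rather than over $A$.

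First, since $I_M M = 0$, we may regard $M$ as a $B$-module. By the definition of $I_M$, an element $a + I_M \in B$ annihilates $M$ exactly when $a \in I_M$, that is, exactly when $a + I_M = 0$. So the annihilator of $M$ in $B$ is zero, and $M$ is a faithful $B$-module.

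Second, $M$ is $\tau_A$-rigid by assumption. Lemma~\ref{lem:reduction1} then shows that $M$ is $\tau_B$-rigid. This rests on Lemma~\ref{lem:reduction5}, that is, on the inclusion $\tau_B(M) \subseteq \tau_A(M)$ from \cite[Proposition~4.2]{AR77}, which gives $E_B(M) \le E_A(M) = 0$.

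Third, $M$ is now a faithful $\tau_B$-rigid $B$-module. Lemma~\ref{lem:reduction2}, applied to the algebra $B$ in place of $A$, gives $\pdim_B(M) \le 1$.

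There is no real obstacle. The only point needing care is that the faithfulness of $M$ over $B$ and the reduction of $\tau$-rigidity from $A$ to $B$ are checked over the same algebra $B$, so that Lemma~\ref{lem:reduction2} applies with $B$ as the ambient algebra.
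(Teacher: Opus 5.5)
Your proposal is correct and is exactly the argument the paper intends (it states the corollary without proof right after the two lemmas). As you say, $M$ is faithful over $B=A/I_M$, it is $\tau_B$-rigid by Lemma~\ref{lem:reduction1}, and Lemma~\ref{lem:reduction2} applied over $B$ then gives $\pdim_B(M)\le 1$.
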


In other words, $\tau$-rigid $A$-modules become modules of projective dimension at most one over some factor algebras of $A$.
We want to explore if there is at least a partial analogous result for
$\tau$-regular modules.

\subsection{Reduction of $\tau$-regular modules}
\label{subsec:reduction1}

The following proposition due to Mousavand and Paquette says that the reduction of a generically $\tau$-regular
component is always a generically $\tau$-regular component.

\begin{Prop}[{\cite[Proposition~5.2]{MP23}}]\label{prop:reduction4}
Let $\calZ \in \irr^\tau(A)$, and let $I$ be an ideal of $A$ with
$I \calZ = 0$, and let $B = A/I$.
Then $\calZ \in \irr^\tau(B)$.
\end{Prop}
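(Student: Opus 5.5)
The plan is to compare the invariants $c$ and $E$ of a generic module of $\cZ$ over $A$ and over $B$, using Lemma~\ref{lem:reduction5} and the chain of inequalities in Proposition~\ref{prop:voigt} applied over $B$. Since $I\cZ = 0$, the component $\cZ$ lies in the closed subset $\md(B,\bd)$ of $\md(A,\bd)$. Every irreducible component of $\md(B,\bd)$ is an irreducible closed subset of $\md(A,\bd)$, so $\cZ$ is also maximal among irreducible closed subsets of $\md(B,\bd)$. Hence $\cZ \in \irr(B,\bd)$.

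Next I fix a $\tau_A$-regular module $M \in \cZ$. Such a module exists because $\cZ \in \irr^\tau(A)$, and the $\tau_A$-regular modules form a dense open subset of $\cZ$. Since $M$ is $\tau_A$-regular, it lies in no other component of $\md(A,\bd)$. Therefore $\md_M(A,\bd) = \cZ$ and $c_A(M) = \dim \cZ - \dim \cO_M$.

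The key point is to check that $c_B(M) = c_A(M)$, and not merely the inequality of Lemma~\ref{lem:reduction5}. Since $M \in \cZ \in \irr(B,\bd)$, we have $\md_M(B,\bd) \supseteq \cZ$, and $\md_M(B,\bd) \subseteq \md_M(A,\bd) = \cZ$. So $\md_M(B,\bd) = \cZ$. The orbit $\cO_M$ is the same $\GL_\bd$-orbit in both varieties, so $c_B(M) = \dim\cZ - \dim\cO_M = c_A(M)$.

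Combining this with Proposition~\ref{prop:voigt} over $B$ and Lemma~\ref{lem:reduction5} gives
$$
c_A(M) = c_B(M) \le e_B(M) \le E_B(M) \le E_A(M) = c_A(M).
$$
All of these are therefore equalities, and in particular $c_B(M) = E_B(M)$, so $M \in \taureg(B)$. Consequently $\cZ$ contains a $\tau_B$-regular module, which gives $\cZ \in \irr^\tau(B)$.

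The only potentially delicate step is the identification $\md_M(B,\bd) = \cZ$. It relies on the uniqueness of the component of $\md(A,\bd)$ containing a $\tau_A$-regular module, which is stated in Section~\ref{subsec:introtauregular}. It also relies on the elementary fact that $\cZ$ stays an irreducible component after passing to the closed subvariety $\md(B,\bd)$. Everything else is formal once the inequality $E_B(M) \le E_A(M)$ from \cite[Proposition~4.2]{AR77}, already recorded in Lemma~\ref{lem:reduction5}, is available. The same argument, applied to an arbitrary $M \in \cZ \cap \taureg(A)$, also yields Proposition~\ref{prop:introreduction}.
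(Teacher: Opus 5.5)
Your argument is correct and is essentially the paper's own. The paper cites this statement from \cite{MP23} and establishes it through the refinement Proposition~\ref{prop:reduction6}, using exactly your chain $E_B(M) \le E_A(M) = c_A(M) = c_B(M)$ together with Proposition~\ref{prop:voigt} over $B$; your identification $\md_M(B,\bd) = \cZ$ simply spells out the step ``$\cZ \in \irr(B)$ implies $c_B(M) = c_A(M)$'' that the paper states in one line.
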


The following is a refinement of Proposition~\ref{prop:reduction4}.

\begin{Prop}\label{prop:reduction6}
Let $\cZ \in \irr^\tau(A)$,
and let $M \in \cZ \cap \taureg(A)$.
Let
$I$ be an ideal of $A$ with $I \cZ = 0$,
and let
$B = A/I$.
Then $M \in \taureg(B)$.
\end{Prop}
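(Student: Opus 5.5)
Since $I\cZ = 0$, every $N \in \cZ$ is a $B$-module, and $\cZ$ is a closed irreducible subset of $\md(B,\bd)$. Because $\md(B,\bd)$ is closed in $\md(A,\bd)$ and $\cZ$ is maximal irreducible in $\md(A,\bd)$, $\cZ$ is also an irreducible component of $\md(B,\bd)$. The plan is to squeeze $E_B(M)$ between $c_B(M)$ and $c_A(M)$, using Proposition~\ref{prop:voigt} for $B$ and Lemma~\ref{lem:reduction5}.

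The first step is to identify $c_B(M)$ with $c_A(M)$. Since $M$ is $\tau_A$-regular, $\cZ$ is the unique component of $\md(A,\bd)$ containing $M$, so $\md_M(A,\bd) = \cZ$ and $c_A(M) = \dim\cZ - \dim\cO_M$. On the $B$-side, $\cZ$ is one of the components of $\md(B,\bd)$ containing $M$, so $\dim\md_M(B,\bd) \ge \dim\cZ$. Hence $c_B(M) \ge c_A(M)$. Lemma~\ref{lem:reduction5} gives the reverse inequality, so $c_B(M) = c_A(M)$. The orbit $\cO_M$ is the same $\GL_\bd$-orbit in both varieties, which is used implicitly here.

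Now I chain the inequalities:
$$
c_A(M) = c_B(M) \le e_B(M) \le E_B(M) \le E_A(M) = c_A(M).
$$
The first inequality is Proposition~\ref{prop:voigt} applied to $B$. The second inequality is also Proposition~\ref{prop:voigt} applied to $B$. The third inequality is Lemma~\ref{lem:reduction5}, which rests on \cite[Proposition~4.2]{AR77}. The final equality is the $\tau_A$-regularity of $M$. All terms are therefore equal; in particular $c_B(M) = E_B(M)$, so $M \in \taureg(B)$.

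The only step needing care is the first one, namely showing that $\md_M(B,\bd)$ has dimension at least $\dim\cZ$. It uses the facts that $\cZ \subseteq \md(B,\bd)$ is irreducible and closed, and that any irreducible closed subset of $\md(B,\bd)$ containing $\cZ$ is irreducible in $\md(A,\bd)$, hence equal to $\cZ$. Here the assumption $I\cZ = 0$ for the whole component, rather than merely $IM = 0$, is essential. Without it, as the counterexample mentioned after Lemma~\ref{lem:reduction1} shows, $M$ might lie only in smaller $B$-components and $c_B(M)$ could drop below $E_B(M)$.
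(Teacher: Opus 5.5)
Your proposal is correct and is essentially the paper's proof: the paper likewise uses $\cZ \in \irr(B)$ to get $c_B(M) = c_A(M)$. It then squeezes $E_B(M) \le E_A(M) = c_A(M) = c_B(M)$ against the inequality $c_B(M) \le E_B(M)$ from Proposition~\ref{prop:voigt}. The only difference is that you spell out why $c_B(M) = c_A(M)$ (a $\tau$-regular module lies in a unique component, so $\md_M(A,\bd) = \cZ$), which the paper leaves implicit.
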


\begin{proof}
We have $\cZ \in \irr(B)$.
This implies $c_B(M) = c_A(M)$.
We get
$$
E_B(M) \le E_A(M) = c_A(M) = c_B(M).
$$
(The first inequality follows from Lemma~\ref{lem:reduction5}.)
Using Proposition~\ref{prop:voigt},
this implies now $c_B(M) = E_B(M)$, i.e.\ $M \in \taureg(B)$.
\end{proof}

\Examples:
\begin{itemize}\itemsep2mm

\item[(i)]
In Proposition~\ref{prop:reduction6}, it is not enough to take an ideal $I$
of $A$ with $I M = 0$, as the following example shows.
Let $A = KQ$ and $B = KQ/I$ where $Q$ is the quiver
$$
\xymatrix{
1 & 2 \ar[l]_a & 3 \ar[l]_b
}
$$
and $I$ is generated by $ab$.
Let $M = P(2) \oplus I(2) \oplus S(3)$.
Then we have $I_M = I$.
In particular, $I M = 0$.
Observe also that $I$ is the only non-zero ideal of $A$
with $IM = 0$.
Then $M$ has the following properties:
\begin{itemize}\itemsep2mm

\item[$\bullet$]
$M$ is not $\tau_A$-rigid;

\item[$\bullet$]
$\pdim_A(M) = 1$ and $\pdim_B(M) = 2$;

\item[$\bullet$]
$M$ is $\tau_A$-regular but not $\tau_B$-regular.

\end{itemize}

\item[(ii)]
As the following example shows, the reduction of a generically
$\tau_A$-regular component $\cZ$
to a generically $\tau_B$-regular component does in general not lead to a bijection
between the $\tau_A$-regular modules and
the $\tau_B$-regular modules in $\cZ$.
In other words, the converse of Proposition~\ref{prop:reduction6} does not hold.
Let $A = KQ/I$ where $Q$ is the quiver
$$
\xymatrix@+1ex{
1 \ar@/_1.5ex/[r]_a & 2 \ar@/_1.5ex/[l]\ar[l]
}
$$
and $I$ is generated by all paths of length $2$.
Let $\bd = (1,1)$, and let
$\cZ = \{ M \in \md(A,\bd) \mid aM = 0 \}$.
Then $\cZ \in \irr^\tau(A)$ with $\pdim_A(\cZ) = \infty$.
The only non-zero ideal $J$ in $A$ with $J \cZ = 0$ is
the $1$-dimensional ideal $J = (a)$ generated by the arrow $a$.
Using the notation from Theorem~\ref{thm:intromain1.9} we have
$$
\ov{\cZ^t} = \cZ_t
\text{\quad and \quad}
(a) = I_{\cZ_\infty} = I_{\cZ_t}
$$
for all $t \ge 1$.
Let $B = A/I_{\cZ_\infty}$.
Then $B$ is isomorphic to the path algebra
of the Kronecker quiver
$$
\xymatrix{
1 & 2 \ar@/^1ex/[l]\ar@/_1ex/[l]
}.
$$
In particular, $B$ is hereditary, and therefore all $B$-modules
are $\tau_B$-regular.
The $A$-module $N = S(1) \oplus S(2)$ is contained in $\cZ$.
It is $\tau_B$-regular but not $\tau_A$-regular.
In fact, $N$ is the only module in $\cZ$, which is not $\tau_A$-regular.

\end{itemize}

\subsection{Reduction to components of projective dimension
at most one}
\label{subpsec:pdim1}

Let $\cZ \in \irr^\tau(A)$.
Assume that there is some $M \in \cZ$ such that $M^t$ is $\tau$-regular for all $t \ge 1$.
For each such $t$ there is a unique $\cZ_t \in \irr^\tau(A)$ such that
$M^t \in \cZ_t$.

\begin{Lem}\label{lem:limit1}
We have $I_{\cZ_{t+1}} \subseteq I_{\cZ_t}$.
\end{Lem}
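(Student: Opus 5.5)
The plan is to reduce the inclusion of annihilators to a statement about modules, using that annihilators of direct sums are intersections. First, each $\cZ_t$ is the unique component containing the $\tau$-regular module $M^t$. Since $\tau$-regular modules form a dense open subset of $\cZ_t$, the generic modules of $\cZ_t$ are the generic points of the open set of $\tau$-regular modules in it. Next, I would show that $\cZ_t$ is the closure of $\cZ^t$, where $\cZ$ is the component containing $M$ (for $t=1$ we have $\cZ_1 = \cZ$). Since $M \in \cZ$, we have $M^t \in \cZ^t$, so $\ov{\cZ^t}$ is an irreducible closed set containing $M^t$. As $M^t$ is $\tau$-regular, $\cZ^t$ is generically $\tau$-regular. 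Then $\ov{\cZ^t}$ is an irreducible closed subset of $\md(A,t\bd)$ that contains a $\tau$-regular point, and $\tau$-regular points lie in a unique component. Hence $\ov{\cZ^t} \subseteq \cZ_t$.

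The key step, and the one I expect to be the main obstacle, is the reverse containment. Equivalently, the dense open subset of $\tau$-regular modules of $\cZ_t$ should meet $\cZ^t$ in a dense set, i.e.\ $\dim \ov{\cZ^t} = \dim \cZ_t$. I would try a local argument at $M^t$. A $\tau$-regular module is regular in $\Spec(A,t\bd)$, so $\md(A,t\bd)$ near $M^t$ consists only of $\cZ_t$. It then suffices to show that $\ov{\cZ^t}$ contains an open neighbourhood of $M^t$ in $\cZ_t$.

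For this I would use the projective presentation picture from the proof of Lemma~\ref{lem:directsums1}. The generic $N\in\cZ_t$ has a presentation $P_1^t\to P_0^t$ (Fei's Lemma). The presentations of maximal rank form an open set in $\Hom_A(P_1^t,P_0^t)$, and the Plamondon-type correspondence of \cite[Section~5]{BS25} identifies $\cZ_t$ with the closure of the cokernels of generic maps. I expect the cokernels of block-diagonal maps $f_1\oplus\cdots\oplus f_t$ with $f_i$ generic to be dense here. Indeed, $\rk(f_1\oplus\cdots\oplus f_t) = t\,r(P_1,P_0) = r(P_1^t,P_0^t)$ by the additivity already established in Lemma~\ref{lem:directsums3}. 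The generic cokernel then lies in $\cZ_t$ and also in $\cZ^t$, which forces $\cZ_t=\ov{\cZ^t}$.

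If that density argument stalls, I only need the weaker statement that some dense set of modules of $\cZ_t$ lies in $\ov{\cZ^t}$. Since $I_{\cU}=I_{\ov\cU}$, this already gives $I_{\cZ_t}=I_{\cZ^t}$. Granting it, the inclusion follows. Since $I_{N\oplus N'}=I_N\cap I_{N'}$, we get
$$I_{\cZ_t}=I_{\cZ^t}=\bigcap_{M_1,\dots,M_t\in\cZ}\bigcap_i I_{M_i}=I_\cZ,$$
and the same computation gives $I_{\cZ_{t+1}}=I_{\cZ}$. So we in fact get equality, and in particular $I_{\cZ_{t+1}}\subseteq I_{\cZ_t}$.

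There is also a more direct route to the inclusion alone. Any $N\in\cZ^{t+1}$ has the form $N'\oplus N''$ with $N'\in \cZ^t\subseteq\cZ_t$ and $N''\in\cZ$. Hence for $a\in I_{\cZ_{t+1}}\subseteq I_{\cZ^{t+1}}$ we get $aN'=0$ for all $N'\in\cZ^t$. This gives $I_{\cZ_{t+1}}\subseteq I_{\cZ^t}$, and we would then need $I_{\cZ^t} = I_{\cZ_t}$, which is again the comparison of $\cZ^t$ with $\cZ_t$.
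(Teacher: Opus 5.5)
There is a genuine gap. Your inclusion $\ov{\cZ^t}\subseteq\cZ_t$ is correct, but the step you call the key one, $\cZ_t=\ov{\cZ^t}$ (equivalently $\dim\ov{\cZ^t}=\dim\cZ_t$), is false in general, even under the hypotheses of the lemma. By Theorem~\ref{thm:CBS02}, $\ov{\cZ^t}$ is a component only if $e_A(\cZ,\cZ)=0$.

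For a counterexample, let $A$ be the path algebra of the quiver with vertices $1,2$ and three arrows $2\to 1$. Every module has projective dimension at most one and is therefore $\tau$-regular. So $\cZ=\md(A,(1,1))$ and any $M\in\cZ$ satisfy the hypotheses, and $\cZ_2=\md(A,(2,2))\cong K^{12}$. For non-isomorphic generic $M_1,M_2\in\cZ$ we have $\Hom_A(M_1,M_2)=0$ and $\dim\Ext_A^1(M_1,M_2)=1$. Moreover $\dim\ov{\cZ^2}=\dim\cO_{M_1\oplus M_2}+2c_A(\cZ)=6+4=10<12$.

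Consequently:
\begin{itemize}
\item $\ov{\cZ^2}$ contains no neighbourhood of $M^2$ in $\cZ_2$.
\item Cokernels of block-diagonal maps $f_1\oplus f_2$ are not dense. Maximal rank only places $f_1\oplus\cdots\oplus f_t$ in the open set of maximal-rank maps, not in a dense part of it. Here the generic module of $\cZ_2$ is even indecomposable.
\item Your fallback is not weaker. Since $\ov{\cZ^t}$ is closed, containing a dense subset of $\cZ_t$ means containing all of $\cZ_t$.
\end{itemize}
So the equality $I_{\cZ_t}=I_{\cZ}$ you derive is unsupported. It is also far stronger than what the paper proves: it only obtains a decreasing chain of ideals that stabilizes at $s(\cZ)$.

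The missing idea is to apply your own uniqueness argument (the one giving $\ov{\cZ^t}\subseteq\cZ_t$) to a different irreducible set. Set $\bd_{t+1}=\dimv(M^{t+1})$ and let $\cU$ be the image of the map
\[
\GL_{\bd_{t+1}}\times\cZ_t\times\cO_M\to\md(A,\bd_{t+1}),\qquad (g,N,M')\mapsto g.(N\oplus M').
\]
This set is irreducible and contains $M^t\oplus M\cong M^{t+1}$, which is $\tau$-regular, so $\ov{\cU}\subseteq\cZ_{t+1}$. Hence every $N\in\cZ_t$ is a direct summand of $N\oplus M\in\cZ_{t+1}$, and every $a\in I_{\cZ_{t+1}}$ kills $N$. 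This is the paper's proof, and it never compares $\cZ_t$ with $\ov{\cZ^t}$. In your last paragraph, taking $N'\in\cZ_t$ instead of $N'\in\cZ^t$ (justified by $\cU\subseteq\cZ_{t+1}$) is exactly what closes the argument.
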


\begin{proof}
Let $\bd_t := \dimv(M^t)$.
Let
$\cU$ be the image of the morphism
\begin{align*}
\GL_{\bd_{t+1}} \times \cZ_t \times \cO_M &\to \md(A,\bd_{t+1})
\\
(g,M_1,M_2) &\mapsto g.(M_1 \oplus M_2).
\end{align*}
Then $\cU$ is an irreducible constructible subset of
$\md(A,\bd_{t+1})$.
Furthermore, $\cU$ is $\tau$-regular, since $M^{t+1} \in \cU$.
Thus the $\tau$-regular modules form a dense subset of $\cU$.
It follows that $\cU \subseteq \cZ_{t+1}$.
Thus all modules from $\cZ_t$ appear as direct summands of modules
from $\cZ_{t+1}$.
This implies $I_{\cZ_{t+1}} \subseteq I_{\cZ_t}$.
\end{proof}

By Lemma~\ref{lem:limit1},
there is a chain
$$
\cdots \subseteq I_{\cZ_{t+1}} \subseteq I_{\cZ_t} \subseteq \cdots
\subseteq I_{\cZ_1}
$$
of ideals of $A$.
For dimension reasons there is some number $s(\cZ)$ such that
$$
I_{\cZ_t} = I_{\cZ_{s(\cZ)}}
$$ for all $t \ge s(\cZ)$.
Then
$$
I_{\cZ_\infty} := \bigcap_{t \ge 1} I_{\cZ_t} = I_{\cZ_{s(\cZ)}}.
$$
Define
$$
B := A/I_{\cZ_\infty}.
$$
The next lemma follows now immediately.

\begin{Lem}\label{lem:limit3}
With the notation above,
for all $t \ge s(\cZ)$, $\cZ_t$ is a faithful component in $\irr^\tau(B)$.
\end{Lem}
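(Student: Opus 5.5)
The lemma says that for $t \ge s(\cZ)$, $\cZ_t$ is a faithful component in $\irr^\tau(B)$ with $B = A/I_{\cZ_\infty}$. I would check three things in turn: that $\cZ_t$ is annihilated by $I_{\cZ_\infty}$, that it becomes a generically $\tau_B$-regular component of $\md(B)$, and that it is faithful over $B$.

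\emph{Annihilation.} For every $t \ge 1$ we have $I_{\cZ_\infty} \subseteq I_{\cZ_t}$ by the definition of $I_{\cZ_\infty}$ as an intersection. Hence $I_{\cZ_\infty}\cZ_t = 0$. In particular, for all $t$ the variety $\cZ_t$ lies in the closed subset $\md(B,\bd_t)$ of $\md(A,\bd_t)$, where $\bd_t := \dimv(M^t)$.

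\emph{Component of $\md(B)$.} Since $\md(B,\bd_t) \subseteq \md(A,\bd_t)$ and $\cZ_t$ is an irreducible component of the larger variety contained in the smaller one, $\cZ_t$ is a maximal irreducible closed subset of $\md(B,\bd_t)$, so $\cZ_t \in \irr(B)$. Proposition~\ref{prop:reduction4}, applied with $I = I_{\cZ_\infty}$, then gives $\cZ_t \in \irr^\tau(B)$. Alternatively, Proposition~\ref{prop:reduction6} shows directly that the $\tau_A$-regular modules in $\cZ_t$, such as $M^t$, are $\tau_B$-regular.

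\emph{Faithfulness.} For $t \ge s(\cZ)$ the chain has stabilised, so $I_{\cZ_t} = I_{\cZ_{s(\cZ)}} = I_{\cZ_\infty}$. The annihilator of $\cZ_t$ in $B$ is therefore $I_{\cZ_t}/I_{\cZ_\infty} = 0$, which says exactly that $\cZ_t$ is faithful as a component of $\md(B)$.

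There is no real obstacle here; the only point needing care is the identification of $\cZ_t$ as an irreducible component of $\md(B,\bd_t)$, and that is immediate from maximality.
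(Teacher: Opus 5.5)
Your proposal is correct and follows essentially the same route as the paper, which simply says the lemma ``follows now immediately''. You spell that out: $I_{\cZ_\infty}\subseteq I_{\cZ_t}$ lets $\cZ_t$ be viewed as a component in $\irr(B)$, Proposition~\ref{prop:reduction4} (or Proposition~\ref{prop:reduction6} applied to $M^t$) gives generic $\tau_B$-regularity, and the stabilisation $I_{\cZ_t}=I_{\cZ_\infty}$ for $t\ge s(\cZ)$ gives faithfulness over $B$.
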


\begin{Thm}\label{thm:limit2}
Let $\cZ \in \irr^\tau(A)$.
Then the following hold:
\begin{itemize}\itemsep2mm

\item[(i)]
Assume that $I$ is an ideal of $A$ with $I\cZ = 0$, and let
$B = A/I$.
If
$$
\pdim_B(\cZ) \le 1,
$$
then $\cZ^t$ is $\tau_A$-regular for all
$t \ge 1$.

\item[(ii)]
Assume that $\cZ^t$ is $\tau_A$-regular for all $t \ge 1$,
let $I = I_{\cZ_\infty}$, and let $B = A/I_{\cZ_\infty}$.
Then $\pdim_B(\cZ) \le 1$.

\end{itemize}
\end{Thm}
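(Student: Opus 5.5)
Part (i) is a lift from $B$ to $A$; part (ii) is a reduction from $A$ to $B$ combined with the faithful case, Theorem~\ref{thm:faithful4}.

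\textbf{Part (i).} Since $\pdim_B(\cZ) \le 1$, there is a dense open subset of $\cZ$ consisting of modules $M$ with $\pdim_B(M) \le 1$. Intersecting with the dense open set of $\tau_A$-regular modules, we pick $M \in \cZ$ which is $\tau_A$-regular and has $\pdim_B(M) \le 1$. Then $M^t$ has $\pdim_B(M^t)\le 1$, so $M^t$ is $\tau_B$-regular. We must transport this back to $A$.

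The plan is to compare the invariants $c$ and $E$ for $A$ and $B$ on $M^t$:
\begin{itemize}
\item $E_B(M^t)=t^2E_B(M)$ and $E_A(M^t)=t^2E_A(M)$.
\item By Proposition~\ref{prop:reduction6}, $M$ is $\tau_B$-regular, so $E_B(M)=c_B(M)=c_A(M)=E_A(M)$. Hence $E_A(M^t)=E_B(M^t)=c_B(M^t)$.
\item Lemma~\ref{lem:reduction5} gives $c_B(M^t)\le c_A(M^t)$, so $E_A(M^t)\le c_A(M^t)$.
\item Proposition~\ref{prop:voigt} gives the reverse inequality, so $M^t$ is $\tau_A$-regular.
\end{itemize}
Since $M^t\in\cZ^t$, the set $\cZ^t$ is $\tau_A$-regular. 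This step is routine.

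\textbf{Part (ii).} Fix a generic $\tau_A$-regular $M\in\cZ$ with $M^t$ $\tau_A$-regular for all $t$; this is possible by Lemma~\ref{lem:directsums1}. We use the components $\cZ_t$ from Lemma~\ref{lem:limit1}. By Lemma~\ref{lem:limit1}, $I=I_{\cZ_\infty}\subseteq I_{\cZ_1}$, and $\cZ_1=\cZ$, so $I\cZ=0$.

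Let $s=s(\cZ)$. By Lemma~\ref{lem:limit3}, $\cZ_s$ is a faithful component in $\irr^\tau(B)$; here Proposition~\ref{prop:reduction4} gives $\tau_B$-regularity, since $I\cZ_s=0$. To apply Theorem~\ref{thm:faithful4} over $B$ to $\cZ_s$, we need $\cZ_s^u$ to be $\tau_B$-regular for all $u$.

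We claim $\cZ_s^u\subseteq \cZ_{su}$. The argument is the same as in Lemma~\ref{lem:limit1}: the image of $\GL\times\cZ_s^{\,u}$ is irreducible and contains $M^{su}$, which is $\tau_A$-regular, hence it lies in $\cZ_{su}$. Moreover $I\cZ_{su}=0$, and $M^{su}$ is $\tau_B$-regular by Proposition~\ref{prop:reduction6} applied to $\cZ_{su}$. So $\cZ_s^u$ contains a $\tau_B$-regular module, namely $M^{su}$, and is therefore $\tau_B$-regular.

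Theorem~\ref{thm:faithful4} over $B$ then yields $\pdim_B(\cZ_s)\le 1$. Generic modules of $\cZ_s$ have $\pdim_B\le1$, and a generic module of $\cZ_s$ contains, as a direct summand, a module of $\cZ$ which is generic in $\cZ$ (again via $\cZ^s\subseteq\cZ_s$ with dense image). Direct summands inherit $\pdim_B\le1$, so $\pdim_B(\cZ)\le1$.

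\textbf{Main obstacle.} The delicate point is the last genericity transfer. We must check that a dense open subset of $\cZ_s$ meets the image of $\cZ^s$ in points whose summands range over a dense subset of $\cZ$. We handle this by choosing the summands $N_i$ generic in $\cZ$ so that $N_1\oplus\cdots\oplus N_s$ is generic in $\cZ_s$. This works because the image of $\GL\times\cZ^s$ is dense in $\cZ_s$: $\cZ^s$ lies in $\cZ_s$ and has the same dimension by Lemma~\ref{lem:special1}-type counting, so its closure is $\cZ_s$. If that density fails, we instead use upper semicontinuity of $\pdim_B$ on the constructible set $\cZ^s$ directly.
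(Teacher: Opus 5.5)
Part (i) is correct and is the paper's argument. In part (ii), everything up to $\pdim_B(\cZ_s)\le 1$ is also fine. Your check that $\cZ_s^u\subseteq\cZ_{su}$ and that $M^{su}$ is $\tau_B$-regular by Proposition~\ref{prop:reduction6} is what the paper's parenthetical remark does.

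The gap is in the final transfer from $\cZ_s$ to $\cZ$. Your claim that $\cZ^s$ is dense in $\cZ_s$ is false in general. The counting of Lemma~\ref{lem:special1} presupposes that $\ov{\cZ^s}$ is an irreducible component. By Theorem~\ref{thm:CBS02} this requires $e_A(\cZ,\cZ)=0$, and nothing in the hypotheses gives it.

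For a counterexample, let $A$ be the path algebra of the $3$-Kronecker quiver and $\cZ=\md(A,(1,1))$. Every $A$-module is $\tau$-regular, so the hypotheses hold, and $\cZ_2=\md(A,(2,2))$ has dimension $12$. For generic $M,M'\in\cZ$, the orbit of $M\oplus M'$ has dimension $8-2=6$ and the isoclasses form a $4$-parameter family, so $\dim\cZ^2=10$. Equivalently, $e_A(\cZ,\cZ)=E_A(\cZ,\cZ)=1$.

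Your fallback does not repair this. Upper semicontinuity makes $\{N\in\cZ_s \mid \pdim_B(N)\le 1\}$ dense open in $\cZ_s$, but such a set need not meet the non-dense constructible set $\cZ^s$. Semicontinuity on $\cZ^s$ itself only says that the minimum of $\pdim_B$ on $\cZ^s$ is attained generically, not that this minimum is at most $1$.

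The missing ingredient is Theorem~\ref{thm:sincere}, applied over $B$. The component $\cZ_s\in\irr^\tau(B)$ is faithful, hence sincere, and $\pdim_B(\cZ_s)\le 1$. So \emph{every} $\tau_B$-regular module of $\cZ_s$, not just a generic one, has projective dimension at most one over $B$. The reason is that sincerity forces the generic presentation $P_1\to P_0$ to be injective, so maximal rank already means injective.

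You have already shown, in the case $u=1$ of your argument, that $M^s\in\cZ^s\subseteq\cZ_s$ is $\tau_B$-regular. Hence $\pdim_B(M^s)\le 1$, so $\pdim_B(M)\le 1$ with $M\in\cZ$. This gives $\pdim_B(\cZ)\le 1$ without any density statement, and it is how the paper concludes.
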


\begin{proof}
(i):
Assume that $\pdim_B(\cZ) \le 1$.
It follows that $\cZ^t$ is $\tau_B$-regular for
all $t \ge 1$.
Let $M$ be generic in $\cZ$.
We get
\begin{align*}
c_A(M^t) &\ge c_B(M^t) = E_B(M^t) = t^2 E_B(M) = t^2 c_B(M)
  \\
&= t^2 c_A(M) = t^2 E_A(M) = E_A(M^t).
\end{align*}
This implies $c_A(M^t) = E_A(M^t)$.
Thus $M^t$ and therefore also $\cZ^t$ is $\tau_A$-regular
for all $t \ge 1$.

(ii):
Assume that $\cZ^t$ is $\tau_A$-regular for all $t \ge 1$.
Then $\cZ^t \subseteq \cZ_t$ for some unique
$\cZ_t \in \irr^\tau(A)$.

By Proposition~\ref{prop:reduction6},
each $\tau_A$-regular $M \in \cZ_t$ is also $\tau_B$-regular.
Thus $\cZ_t$ is $\tau_B$-regular for all $t \ge 1$.
Let $t \ge s(\cZ)$.
By Lemma~\ref{lem:limit3},
$\cZ_t$ is faithful (as a component of $B$-modules),
and $(\cZ_t)^s$ is generically $\tau_B$-regular
for all $s \ge 1$.
(Each $\tau_A$-regular module in $(\cZ^t)^s = \cZ^{ts}$ is $\tau_B$-regular
in $(\cZ_t)^s$, by Proposition~\ref{prop:reduction6}.)
Thus $\pdim_B(\cZ_t) \le 1$ by Theorem~\ref{thm:faithful4}.
It follows from
Theorem~\ref{thm:sincere}
that there is a $\tau_B$-regular
module $M = M_1 \oplus \cdots \oplus M_t$ in $\cZ_t$
with $M_i \in \cZ$ for all $1 \le i \le t$ and $\pdim_B(M) \le 1$.
This implies $\pdim_B(\cZ) \le 1$.
\end{proof}

\begin{Cor}\label{cor:limit3}
Let $\cZ \in \irr^\tau(A)$.
Then the following are equivalent:
\begin{itemize}\itemsep2mm

\item[(i)]
$\cZ^t$ is $\tau_A$-regular for all $t \ge 1$;

\item[(ii)]
There exists some ideal $I$ of $A$ with $I \cZ = 0$ such that
$$
\pdim_B(\cZ) \le 1,
$$
where $B = A/I$.

\end{itemize}
\end{Cor}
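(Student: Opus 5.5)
The corollary follows by combining the two parts of Theorem~\ref{thm:limit2}. The proof of (i) $\implies$ (ii) is constructive, which also handles the additional claim in Corollary~\ref{cor:intromain1.10}.

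For (ii) $\implies$ (i), apply Theorem~\ref{thm:limit2}(i) directly to the given ideal $I$ with $I\cZ=0$ and $\pdim_B(\cZ) \le 1$. It yields that $\cZ^t$ is $\tau_A$-regular for all $t \ge 1$.

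For (i) $\implies$ (ii), choose $I := I_{\cZ_\infty}$ and $B := A/I$. The ideal $I_{\cZ_\infty}$ is defined since $\cZ^t$ is $\tau_A$-regular for all $t$. Indeed, a generic $M \in \cZ$ has $M^t$ $\tau_A$-regular for all $t$: the $\tau$-regular locus of $\cZ^t$ is dense open, and $\cZ^t$ is the image of $\GL_{t\bd}\times\cZ^{\times t}$. So the chain of Lemma~\ref{lem:limit1} and the stabilization index $s(\cZ)$ are available. We need $I\cZ = 0$. By Lemma~\ref{lem:limit1}, $I_{\cZ_\infty} \subseteq I_{\cZ_1}$. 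Since $\cZ$ is $\tau$-regular, $\cZ_1 = \cZ$, hence $I\cZ=0$. Then Theorem~\ref{thm:limit2}(ii) gives $\pdim_B(\cZ) \le 1$, proving (ii).

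For the final assertion, take the same ideal $I = I_{\cZ_\infty}$ and any $M \in \cZ \cap \taureg(A)$. By Proposition~\ref{prop:reduction6}, $M \in \taureg(B)$, and $\cZ$ is a component of $\md(B)$ lying in $\irr^\tau(B)$ with $\pdim_B(\cZ)\le 1$. We would like to invoke Theorem~\ref{thm:sincere} over $B$ to conclude $\pdim_B(M)\le 1$. This is the main obstacle, since Theorem~\ref{thm:sincere} requires $\cZ$ to be sincere as a $B$-component, and the Example after Theorem~\ref{thm:sincere} shows the conclusion can fail otherwise.

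To get around this, I would first try to pass from $B$ to $B' := B/BeB$, where $e$ is the sum of the primitive idempotents at vertices $i$ with $[\cZ:S(i)]=0$. Then $\cZ$ is sincere over $B'$. However, one must check that $\pdim_{B'}(\cZ)\le1$ still holds and that $I' \supseteq I$ still annihilates $\cZ$. Alternatively, I would repeat the argument of Theorem~\ref{thm:sincere} directly over $B$. Take a generic $N\in\cZ$ with a presentation $P_1\xrightarrow{g}P_0\to N\to0$ sharing the terms of the minimal presentation $f$ of $M$, using Fei's Lemma over $B$. The summand $(P'\to 0)$ of $g$ satisfies $\Hom_B(P',N)=0$, so $P'$ is supported only at vertices where $\cZ$ is non-sincere. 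The remaining task is to show that this summand does not obstruct injectivity of $f$ restricted to the sincere part. If both routes fail, I would fall back on stating the final assertion only when $\cZ$ is sincere over $B$, and check whether replacing $I$ by $I + BeB$ preserves the hypotheses of Theorem~\ref{thm:limit2}(ii).
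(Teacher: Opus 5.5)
Your proof of the equivalence is correct and is essentially the paper's: (ii)$\implies$(i) is Theorem~\ref{thm:limit2}(i), and (i)$\implies$(ii) is Theorem~\ref{thm:limit2}(ii) with $I = I_{\cZ_\infty} \subseteq I_{\cZ_1} = I_\cZ$. One caveat: your density argument that a generic $M \in \cZ$ has all $M^t$ $\tau$-regular is not valid as stated, since a dense open subset of $\GL_{t\bd}\times\cZ^{\times t}$ need not meet the diagonal. The fact itself is Lemma~\ref{lem:directsums1}, and you do not need it anyway, because Theorem~\ref{thm:limit2}(ii) is formulated directly under hypothesis (i).

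The extra assertion about $\pdim_B(M)\le 1$ is not part of this statement. Your sincerity obstacle also disappears if you apply Theorem~\ref{thm:sincere} over $B$ to $M^t \in \cZ_t$ with $t \ge s(\cZ)$, rather than to $M \in \cZ$:
$M^t$ is $\tau_A$-regular by Lemma~\ref{lem:directsums1}, hence $\tau_B$-regular by Proposition~\ref{prop:reduction6}, while $\cZ_t$ is faithful (hence sincere) over $B$ with $\pdim_B(\cZ_t) \le 1$. Thus $\pdim_B(M^t) \le 1$, and so $\pdim_B(M) \le 1$.
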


\begin{proof}
(i) $\implies$ (ii):
This follows directly from Theorem~\ref{thm:limit2}(ii).

(ii) $\implies$ (i):
This is the content of Theorem~\ref{thm:limit2}(i).
\end{proof}

\bigskip
\subsection*{Acknowledgements}
\quad
GB gratefully acknowledges the support of the National
Science Centre grant no.\ 2020/37/B/ST1/00127.
He also thanks the Mathematical Institute of the University of Bonn for two weeks of hospitality in October 2025.
JS was partially funded funded by the Deutsche Forschungsgemeinschaft (DFG, German Research Foundation) under Germany's Excellence Strategy – EXC-2047/2 – 390685813.


\end{document}